\newcounter{thmcounter}[section]
\theoremstyle{definition}
\newtheorem{definition}[thmcounter]{Definition}
\newtheorem{remark}[thmcounter]{Remark}
\theoremstyle{plain}
\newtheorem{theorem}[thmcounter]{Theorem}
\newtheorem{lemma}[thmcounter]{Lemma}
\numberwithin{equation}{section}
\numberwithin{thmcounter}{section}
\begin{document}
\begin{frontmatter}

\title{Variable-order fractional $1$-Laplacian diffusion equations for 
multiplicative noise removal}
 
\author{Yuhang Li}
\ead{mathlyh@stu.hit.edu.cn}
\author{Zhichang Guo}
\ead{mathgzc@hit.edu.cn}
\author{Jingfeng Shao\cormark[1]}
\ead{sjfmath@hit.edu.com}
\author{Yao Li}
\ead{yaoli0508@hit.edu.cn}
\author{Boying Wu}
\ead{mathwby@hit.edu.cn}
\cortext[1]{Corresponding author.}
\address{The Department of Mathematics, Harbin Institute of Technology, Harbin 150001, China}

\begin{abstract}
    In this paper, we study a class of fractional $1$-Laplacian 
    diffusion equations with variable orders, proposed 
    as a model for multiplicative noise removal. The existence and uniqueness of 
    the weak solution are proven. To overcome the difficulties in the approximation process,
    we place particular emphasis on studying the density properties
    of the variable-order fractional Sobolev spaces. Numerical experiments demonstrate that 
    our model exhibits favorable performance across the entire image. 
\end{abstract}

\begin{keyword}
Variable-order \sep Fractional $1$-Laplacian \sep Multiplicative noise removal
\end{keyword}

\end{frontmatter}

\section{Introduction}
We study the following variable-order fractional $1$-Laplacian diffusion equation
\begin{equation}
    \left\{
    \begin{array}{ll}
        \displaystyle \vspace{0.2em} \frac{\partial u}{\partial t} = \int_{\Omega} 
        \frac{k(x,y)}{|x-y|^{N+s(x,y)}} \dfrac{u(t,y)-u(t,x)}{\left|u(t,y)-u(t,x)\right|} 
        d y, & \textrm{in~~} (0,T) \times \Omega,  \\ 
        u(0,x) = f, & \textrm{on~~} \Omega,
    \end{array}
    \right. \label{eqn:main}
\end{equation}
where $\Omega \subset \mathbb{R}^N$ with $N \geq 2$ is an open 
set of class $C^{0,1}$ with bounded boundary, $T>0$, $f$ is a positive measurable function, 
$s: \mathbb{R}^N \times \mathbb{R}^N \to (0,1)$ is some symmetric, continuous function, and 
$k(x,y)$ is some nonnegative symmetric weight function with compact 
support. This type of equation can be applied to remove multiplicative noise in images, 
when $u$ represents the restored image and $f$ the noisy image.

Multiplicative noise affects images in many fields, including synthetic aperture radar (SAR), 
ultrasonic imaging, and optical coherence tomography. The observed image $f$ is the pointwise 
product of the original image $u$ and noise $\eta$. Recovering the original image $u$ 
from degraded input $f$ is the main task in multiplicative noise removal,  
which constitutes a critical challenge in the field of image denoising. 
In the past three decades, a variety of methods based on reaction-diffusion equations 
have been proposed for image denoising. The classical diffusion equations for a 
density function $u$ can be traced back to 
the fundamental conservation law $u_t + \operatorname{div}(\vec{J})=0$, 
where $\vec{J}$ is the diffusion flux defined by Fick's first law $\vec{J} = - c\nabla u$. 
Assuming that the diffusion coefficient $a$ depends on the distribution in a manner 
expressed as a function $c = c(u)$, the equation is transformed to a nonlinear 
diffusion equation
\begin{equation} \label{eqn:nonlinear-diffusion}
\frac{\partial u}{\partial t} 
= \operatorname{div}\left(c(u) \nabla u\right),
\end{equation}
taking $c(u)=|\nabla u|^{p-2}$ ($1 \leq p < \infty$), 
then \eqref{eqn:nonlinear-diffusion} becomes the well-known $p$-Laplacian equation
\begin{equation} \label{eqn:pl}
    \frac{\partial u}{\partial t} 
    = \operatorname{div}\left(|\nabla u|^{p-2} \nabla u\right), 
\end{equation}
which has been extensively investigated for image processing \cite{SONG2003}. 
If $p=2$, this is the heat 
equation, which is equivalent to using a Gaussian filter for image denoising, leading to  
an overly smooth restoration of the image. If $p=1$, \eqref{eqn:pl} is the total variation  
(TV) flow, which comes from the gradient decent scheme of the ROF model \cite{RUDIN1992259}. 
Other types of nonlinear diffusion flows, in addition to the TV flow, have also been shown 
to effectively remove additive noise, see \cite{PERONA1990629, CATTE1992182, YOU20001723, 
CHEN20061383, GUO2012958}.

To perform the task of the multiplicative noise removal, inspired by the ROF model, 
Aubert and Aujol proposed the following (AA) variational model \cite{AUBERT2008925} using 
maximum a posteriori estimation (MAP), 
\begin{equation} \label{eqn:vaa}
    E_{AA}(u)=\int_{\Omega} |Du| + \int_{\Omega} \left(\log u +\frac{f}{u}\right)dx,
\end{equation}
and proved the existence and local uniqueness of \eqref{eqn:vaa}. The 
associated evolution problem of \eqref{eqn:vaa} is
\begin{subequations} 
    \begin{empheq}[left=\empheqlbrace]{alignat=2}
        & \vspace{0.2em} \frac{\partial u}{\partial t} = 
        \operatorname{div}\left(\frac{\nabla u}{|\nabla u|}\right) + \lambda \frac{f-u}{u^2},\,
        &&\textrm{~~~in~~} (0,T) \times \Omega, \label{eqn:aa-1} \\ 
        & \left\langle \nabla u, \vec{n} \right \rangle = 0, 
        &&\textrm{~~~on~~} (0,T) \times \partial \Omega, \label{eqn:aa-2} \\
        & u(0,x) = f, &&\textrm{~~~on~~} \Omega, \label{eqn:aa-3} 
    \end{empheq}
\end{subequations}
with \eqref{eqn:aa-1} being interpretable as a source term 
added to the TV flow. For other important variational models for multiplicative noise 
removal, see \cite{SHI2008294, LI20101, JIN201162, DONG2013912373}. The 
authors of \cite{ZHOU2015249} proposed a framework for 
multiplicative noise removal based on nonlinear diffusion equations. They have found 
that incorporating a source term derived from the variational problem into 
the diffusion equation may impede noise removal. As a result, it is  
recommended to select the source term as $0$ and accomplish the task of removing 
multiplicative noise by designing the diffusion coefficient in terms of $u$. For more 
diffusion-based models for multiplicative noise removal without a source term, we refer 
to \cite{ZHOU2018443, SHAN2019763}. By appropriately selecting 
the diffusion coefficient, both the edge and homogeneous 
regions can be restored during the process of image denoising. However, the previously 
mentioned models solely rely on local image information, thus fall 
short in preserving the texture and repetitive structures in images.

To effectively handle textures and repetitive structures in algorithms 
based on PDEs, it is proposed to utilize nonlocal operators for defining
novel types of flows and functionals in image processing and related 
fields \cite{GILBOA20091005}. Let $\Omega \subset \mathbb{R}^N$, $x,y \in \Omega$, $u(x)$ be a real 
function $u : \Omega \to \mathbb{R}$, $v(x,y)$ be a 
vector field $\vec{v}:\Omega \times \Omega \to \mathbb{R}$, 
$w(x,y) \geq 0$ is a symmetric weight function. The nonlocal 
gradient $\nabla_{\mathrm{NL}}u: \Omega \times \Omega \to \mathbb{R}$ is 
defined as 
\[(\nabla_{\mathrm{NL}}u)(x,y) \coloneqq (u(y)-u(x)) \sqrt{w(x,y)},\]
and the nonlocal divergence $\operatorname{div}_{\mathrm{NL}}\vec{v}: \Omega \to \mathbb{R}$ is 
defined as 
\[(\operatorname{div}_{\mathrm{NL}}\vec{v})(x) \coloneqq 
\int_{\Omega} (v(x,y)-v(y,x)) \sqrt{w(x,y)} dy, \]
Similar to the usual nonlinear diffusion equation, 
let $c(u) : \Omega \times \Omega \to \mathbb{R}$ be a symmetric function, then 
a nonlinear nonlocal diffusion operator can be defined as
\[\mathcal{N}u(x)
=\frac{1}{2} \operatorname{div}_{\mathrm{NL}}\left(c(u)\nabla_{\mathrm{NL}}u\right)(x)
= \int_{\Omega} (c(u)(x,y))(u(y)-u(x))w(x,y) dy,\]
then a nonlinear nonlocal diffusion process can be written as
\begin{subequations}  
    \begin{empheq}[left=\empheqlbrace]{align}
        & \vspace{0.2em} \frac{\partial u}{\partial t} = \mathcal{N}u,\ \ \,
        \textrm{in~~} (0,T) \times \Omega, \label{eqn:nonlocal-diffusion-1} \\ 
        & u(0,x) = f,  \textrm{~~~on~~} \Omega. \label{eqn:nonlocal-diffusion-2} 
    \end{empheq}
\end{subequations}

In the context of image processing, the authors of \cite{BUADES2005490} suggested 
using the weight function $w(x,y)$ to measure nonlocal similarity between 
two patches, with further applications 
documented in \cite{KINDERMANN20051091, GILBOA2007595, GILBOA20091005}. Various nonlocal 
diffusion models can be obtained by appropriately selecting $c(u)$ and $w(x,y)$. 
Let $J:\mathbb{R}^N \to \mathbb{R}^N$ be a 
nonnegative continuous radial function with compact support, $J(0)>0$,  
and $\int_{\mathbb{R}^N} J(z) dz = 1$, for $c(u)(x,y)=|u(y)-u(x)|^{p-2}$, 
$w(x,y)=J(x-y)$, \eqref{eqn:nonlocal-diffusion-1}
becomes a nonlocal $p$-Laplacian evolution equation 
\begin{equation} \label{eqn:nlpl}
    \frac{\partial u}{\partial t} 
    = \int_{\Omega} J(x-y) \left|u(t,y)-u(t,x)\right|^{p-2}\left(u(t,y)-u(t,x)\right) dy, 
\end{equation}
which has been studied in \cite{ANDREU2008201, ANDREU20091815}. If the kernel $J$ is 
appropriately rescaled, the solutions of \eqref{eqn:nlpl} converges strongly 
in $L^p$ to the solution of \eqref{eqn:pl} with Neumann condition. 
For $c(u)(x,y)=|u(y)-u(x)|^{p-2}$, $w(x,y)=|x-y|^{-(N+sp)}$, 
\eqref{eqn:nonlocal-diffusion-1}
becomes a fractional $p$-Laplacian evolution equation 
\begin{equation} \label{eqn:fpl}
    \frac{\partial u}{\partial t} = \int_{\Omega} 
    \frac{1}{|x-y|^{N+sp}} \left|u(t,y)-u(t,x)\right|^{p-2}\left(u(t,y)-u(t,x)\right) dy,
\end{equation}
which has been studied in \cite{MAZON2016810}. For $s<1$ 
close to $1$, the solution of \eqref{eqn:fpl} converges  
strongly in $L^p$ to the solution of \eqref{eqn:pl} with Neumann condition 
when $s \to 1^-$. 
If $p=2$, this is the well-known   
fractional Laplacian evolution equation. The fractional 
Laplacian $(-\Delta)^s$ arises in a number of applications such as anomalous diffusion, 
Lévy process, Gaussian random fields, quantum mechanics, 
see \cite{BOUCHAUD1990127,MEERSCHAERT2012,LINDGREN2011423,LASKIN2000298} and 
the references therein. For applications 
of the fractional Laplacian in image denoising 
problems, see \cite{GATTO2015249, ANTIL2017661}. The authors 
of \cite{LIU20191739} proposed the use of the fractional $1$-Laplacian  
in reaction-diffusion systems and applied it for additive noise removal. 
Combining the fractional 1-Laplacian and AA model, the following model \cite{GAO20224837}
\[\left\{
    \begin{array}{ll}
        \displaystyle \vspace{0.2em} \frac{\partial u}{\partial t} = \int_{\Omega} 
        \frac{k(x,y)}{|x-y|^{N+s}} \dfrac{u(t,y)-u(t,x)}{\left|u(t,y)-u(t,x)\right|} 
        d y + \lambda \dfrac{f-u}{u^2}, & \textrm{in~~} (0,T) \times \Omega,  \\ 
        u(0,x) = f, & \textrm{on~~} \Omega,
    \end{array}
\right.\]
was proposed to remove multiplicative noise. This model is referred to as the F1P-AA 
model in this article. The function $k(x,y)$ in the F1P-AA model can be appropriately 
selected to enhance the model's self-adaptability. The previously mentioned nonlocal methods 
perform well in removing noise in homogeneous regions and preserving textures, however, 
they also have two opposite drawbacks: causing over-smoothness in low-contrast regions 
and leaving residual noise around edges \cite{SUTOUR20143506}.

In practical images, there are often both homogeneous regions and edges, as well as 
textures and repetitive structures. In order to make nonlocal methods perform well across 
the entire image, there are currently two approaches. The first approach is to combine 
local and nonlocal methods, as shown in \cite{SUTOUR20143506, DELON2019458, 
GARRIZ2020112, SHI2021103362}. The other approach is 
letting $s$ be spatially dependent in the fractional-order operator. A variational model 
based on variable-order $x \mapsto s(x)$ was proposed in \cite{ANTIL20192479}, which can 
be viewed as a preliminary attempt to apply variable-order fractional Laplacian for image 
denoising. The established variational model is built on the basis of the Stinga-Torrea extension, 
instead of directly defining $(-\Delta)^{s(x)}$, because there is no clear way 
to define $(-\Delta)^{s(x)}$, actually. The well-defined variable-order fractional 
Laplacian $(-\Delta)^{s(\cdot,\cdot)}$ (see \cite{XIANG2019190}) and the variable-order 
fractional $p$-Laplacian $(-\Delta)_p^{s(\cdot,\cdot)}$ (see \cite{BU2021}) have not been applied to image 
processing yet, and there are rare theoretical results on their evolution problems.

Inspired by the models mentioned above, we propose a new model \eqref{eqn:main}
for multiplicative noise removal. Compared with the F1P-AA model, our model offers two 
unique benefits: enhanced self-adaptability through the selection of spatially 
dependent $s(x,y)$ and thorough denoising due to the absence of a source term. 
Based on the numerical experiments, it has been demonstrated that our model 
exhibits favorable performance across the entire  
image. In comparison with the F1P-AA model, our model is more effective 
in terms of denoising effects.

The rest of this paper is organized as follows. We state some necessary preliminaries about 
variable-order fractional Sobolev spaces and 
the main result in Section \ref{2}. In section \ref{3}, we prove more basic results of the 
space $W^{s(\cdot,\cdot),p}(\Omega)$, especially density properties. The existence 
and uniqueness of the variable-order fractional $p$-Laplacian 
evolution equations when $1<p<2$ is proven in Section \ref{4}. 
In Section \ref{5}, we prove that problem \eqref{eqn:main} 
admits a unique weak solution. Some properties of the weak 
solutions of \eqref{eqn:main} are presented in Section \ref{6}. 
Finally, we show some numerical experiments in Section \ref{7} 
to demonstrate the effectiveness of denoising by our model.

\section{Mathematical Preliminaries and main result} \label{2}

In this section, we first state some necessary preliminaries of variable-order fractional Sobolev spaces 
that will be used below. We refer to \cite{XIANG2019190, BU2021} and the references therein 
for some detailed proof. Henceforth, we will always assume 
that $\Omega$ is an open set in $\mathbb{R}^N$, 
$1 \leq p < + \infty$, $s: \mathbb{R}^N \times \mathbb{R}^N \to (0,1)$ is a symmetric, 
continuous function satisfies
\[0<s^-= \inf_{(x,y) \in \mathbb{R}^N \times \mathbb{R}^N} s(x,y) \leq s(x,y)
\leq s^+= \sup_{(x,y) \in \mathbb{R}^N \times \mathbb{R}^N} s(x,y) < 1.\]
Denote by 
\[[u]_{W^{s(\cdot,\cdot),p}(\Omega)} = \left(\int_{\Omega} 
\int_{\Omega} \frac{\left|u(y)-u(x)\right|^p}{|x-y|^{N+s(x,y)p}} dxdy \right)^{\frac{1}{p}}\]
the Gagliardo seminorm with variable order of a measurable function $u$ %
in $\Omega$. We consider the variable-order fractional Sobolev spaces 
\[W^{s(\cdot,\cdot),p}(\Omega) = \left\{u \in L^p(\Omega) \ : \
[u]_{W^{s(\cdot,\cdot),p}(\Omega)} < + \infty \right\},\] 
which is a Banach space with respect to the norm 
\[\|u\|_{W^{s(\cdot,\cdot),p}(\Omega)} \coloneqq \|u\|_{L^p(\Omega)} 
+ [u]_{W^{s(\cdot,\cdot),p}(\Omega)}.\]

Now let $\Omega$ be an open set with bounded boundary. Define the 
operator $T: W^{s(\cdot,\cdot),p}(\Omega) \to L^p(\Omega) \times L^p(\Omega \times \Omega)$ by 
\[T(u) = \left(u(x), \frac{u(x)-u(y)}{|x-y|^{\frac{N}{p}+s(x,y)}}\right),\]
clearly $T$ is an isometry. Thus, $W^{s(\cdot,\cdot),p}(\Omega)$ is 
separable (see \cite[Proposition 3.20]{BREZIS2011}), moreover, 
if $p>1$, $W^{s(\cdot,\cdot),p}(\Omega)$ is reflexive (see \cite[Proposition 3.25]{BREZIS2011}).

By incorporating the techniques from \cite{ANDREU2008201, ANDREU20091815, MAZON2016810}, 
we study the well-posedness of \eqref{eqn:main} using Nonlinear Semigroup Theory, 
with relevant theoretical results found in 
\cite{BENILAN199141} as well as the Appendix in \cite{ANDREU2010} and 
the references therein. We 
denote by $\mathbf{J}_0$ and $\mathbf{P}_0$ the following sets of functions:
\[\begin{array}{c}
    \mathbf{J}_0 \coloneqq \{j:\mathbb{R} \to [0,+\infty], 
    \textrm{ convex and lower semi-continuous with } j(0)=0\}, \\[1ex]
    \mathbf{P}_0 \coloneqq \{q \in C^{\infty}(\mathbb{R}): 0 \leq q' \leq 1, 
    \operatorname{supp}(q') \textrm{ is compact, and } 0 \notin \operatorname{supp}(q)\}.
\end{array}\]
In \cite{BENILAN199141} a relation for $u,v \in L^1(\Omega)$ is defined by $u \ll v$ if 
and only if
\[\int_{\Omega} j(u) dx \leq \int_{\Omega} j(v) dx\]
for all $j \in \mathbf{J}_0$. 
An operator $A \in L^1(\Omega) \times L^1(\Omega)$ is 
completely accretive if given $u_i, v_i$ such that $v_i=Au_i$, $i = 1, 2$ then 
\[\int_{\Omega} (v_1-v_2)q(u_1-u_2) \geq 0\]
for all $q \in \mathbf{P}_0$.
An operator $A$ in $X$ is $m$-completely accretive in $X$ if $A$ is completely 
accretive and $R(I+\lambda A) = X$ for some $\lambda>0$.

\begin{theorem} (See \cite{BENILAN199141, MAZON2016810}) \label{thm:mild-solution}
    If $A$ is an $m$-completely accretive operator, then for 
    every $f \in \overline{\mathrm{Dom}(A)}$, there is a unique mild 
    solution of the abstract Cauchy problem 
    \begin{equation}
        \left\{
        \begin{array}{ll}
            \displaystyle \vspace{0.25em} \frac{d u}{d t} + Au \ni 0, \\ 
            u(0) = f.
        \end{array}
        \right. 
    \end{equation}
    Moreover, if $A$ is the subdifferential of a convex and lower 
    semi-continuous function in $L^2(\Omega)$ then the mild solution of 
    the above problem is a strong solution.
\end{theorem}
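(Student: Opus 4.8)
The plan is to treat this as a consequence of the abstract generation theory for accretive operators, following the classical route of Crandall and Liggett for the existence and uniqueness of the mild solution, and then Brezis's regularity theory for subdifferentials to upgrade mildness to strongness. Throughout, $X$ denotes the ambient Banach space (here $L^1(\Omega)$), and I would first record the purely operator-theoretic consequences of $m$-complete accretivity before turning to the evolution problem.

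First I would show that the resolvent $J_\lambda := (I + \lambda A)^{-1}$ is, for every $\lambda > 0$, a single-valued contraction defined on all of $X$. Single-valuedness and the contraction estimate $\|J_\lambda u - J_\lambda v\| \le \|u - v\|$ follow from accretivity, which is in particular implied by complete accretivity; that $J_\lambda$ is globally defined follows from the range condition $R(I + \lambda A) = X$, which, although assumed only for some $\lambda_0 > 0$, propagates to all $\lambda > 0$ by the standard resolvent-iteration argument for accretive operators. I would also record that $\overline{\mathrm{Dom}(A)} = \overline{\bigcup_{\lambda>0} R(J_\lambda)}$, so that the admissible initial data $f$ are exactly those in the closure of the domain, and that the $\varepsilon$-approximate solutions built below stay within this set.

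Next I would construct the mild solution by the implicit Euler scheme. Fixing a partition of $[0,T]$ with step $h$, I would set $u_0 = f$ and $u_k = J_h u_{k-1}$, producing a piecewise-constant $\varepsilon$-approximate solution $u_h$. The core of the argument, and the step I expect to be the main obstacle, is the Crandall-Liggett estimate: one must show that the family $\{u_h\}$ is uniformly Cauchy as $h \to 0$, by comparing two discretizations with different step sizes through a discrete Gronwall-type inequality assembled from the contraction property of the resolvents. This yields a limit $u(t) = \lim_{n\to\infty} J_{t/n}^{\,n} f = \lim_{n\to\infty}(I + (t/n)A)^{-n} f$, uniformly on $[0,T]$, which is by definition the mild solution; uniqueness is then immediate from the inherited contraction estimate $\|u(t) - \hat u(t)\| \le \|f - \hat f\|$, taking $f = \hat f$.

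For the final assertion I would pass to the Hilbert-space setting $X = L^2(\Omega)$ and invoke Brezis's theorem on the regularizing effect of the semigroup generated by a subdifferential. When $A = \partial\Phi$ for a proper, convex, lower semi-continuous $\Phi$, the operator $-\partial\Phi$ generates a semigroup with a parabolic smoothing property: for every $f \in \overline{\mathrm{Dom}(\partial\Phi)}$ the mild solution satisfies $u(t) \in \mathrm{Dom}(\partial\Phi)$ for a.e.\ $t \in (0,T)$, is locally Lipschitz on $(0,T)$ with $\sqrt{t}\,\tfrac{du}{dt} \in L^2(0,T;L^2(\Omega))$, and hence is differentiable for a.e.\ $t$ with $\tfrac{du}{dt}(t) + \partial\Phi(u(t)) \ni 0$ holding a.e. This is exactly the statement that the mild solution is a strong solution, and no new difficulty arises beyond quoting the Hilbertian theory, since the accretivity in $L^2$ of a subdifferential is automatic from the monotonicity of $\partial\Phi$.
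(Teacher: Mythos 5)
The paper offers no proof of this theorem at all --- it is quoted verbatim from the cited references (B\'enilan--Crandall for the completely accretive framework and Maz\'on--Rossi--Toledo for this formulation) --- and your sketch is exactly the standard argument those references rest on: resolvent contractivity and propagation of the range condition, the Crandall--Liggett exponential formula for existence and uniqueness of the mild solution, and Brezis's regularizing-effect theorem for subdifferentials to upgrade to a strong solution, so the proposal is correct and follows the same (implicit) route. The one small slip is the parenthetical identification of the ambient space as $L^1(\Omega)$: in this paper the operators $B_p^{s(\cdot,\cdot)}$ and $B_1^{s(\cdot,\cdot)}$ live in $L^2(\Omega)$, where m-complete accretivity in particular gives maximal monotonicity, so the Hilbert-space theory you invoke in your final paragraph already covers existence as well --- but your more general Banach-space route is what matches the generality of the statement as written.
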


In this paper, we assume that $k(x,y) \in L^{\infty}(\Omega \times \Omega)$ is 
symmetric and there exist constants 
$0< \delta_1 < \delta_2$, and $C_1 > 0$, $C_2 > 0$ such that 
\[C_1 \chi_{\{|x-y|<\delta_1\}} \leq k(x,y) 
\leq C_2 \chi_{\{|x-y|<\delta_2\}}\]
for all $(x,y) \in \Omega \times \Omega$. Similar to Lemma 2.2 in \cite{GAO20224837}, 
we shall prove a lemma concerning boundedness.
\begin{lemma} \label{lem:bounded}
    Let $\Omega$ be an open set with bounded boundary. 
    Let $1 \leq p < 2$. If there exists a constant $C_0>0$ such that 
    \[\int_{\Omega} |u(x)|^2 dx + \int_{\Omega} \int_{\Omega} 
    \frac{k(x,y)}{|x-y|^{N+s(x,y)p}} \left|u(y)-u(x)\right|^p dxdy \leq C_0, \]
    then we have $u \in L^2(\Omega) \cap W^{s(\cdot,\cdot),p}(\Omega)$ with
    \[\int_{\Omega} |u(x)|^2 dx + \int_{\Omega} \int_{\Omega} 
    \frac{\left|u(y)-u(x)\right|^p}{|x-y|^{N+s(x,y)p}} dxdy 
    \leq C(C_1,C_0,\delta_1,N,|\Omega|).\]
\end{lemma}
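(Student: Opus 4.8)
The plan is to split the unweighted Gagliardo seminorm according to whether $|x-y|$ is small or large, using the lower control on $k$ near the diagonal and a direct kernel estimate away from it. Writing
\[\int_{\Omega}\int_{\Omega}\frac{|u(y)-u(x)|^p}{|x-y|^{N+s(x,y)p}}\,dxdy = I_{\mathrm{near}} + I_{\mathrm{far}},\]
where $I_{\mathrm{near}}$ denotes the integral over $\{(x,y)\in\Omega\times\Omega : |x-y|<\delta_1\}$ and $I_{\mathrm{far}}$ the integral over $\{|x-y|\geq\delta_1\}$, I would first dispose of $I_{\mathrm{near}}$. On the near-diagonal region the hypothesis $k(x,y)\geq C_1\chi_{\{|x-y|<\delta_1\}}$ gives $1\leq k(x,y)/C_1$, so $I_{\mathrm{near}}$ is bounded by $C_1^{-1}$ times the weighted double integral appearing in the assumption, whence $I_{\mathrm{near}}\leq C_0/C_1$. (Only the lower bound on $k$ is used here; the compact-support upper bound is irrelevant for this direction.)

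For the far part I would use the elementary inequality $|u(y)-u(x)|^p\leq 2^{p-1}(|u(x)|^p+|u(y)|^p)$ together with the symmetry of the kernel to reduce matters to
\[I_{\mathrm{far}} \leq 2^{p}\int_{\Omega}|u(x)|^p\left(\int_{\{y\in\Omega:\,|x-y|\geq\delta_1\}}\frac{dy}{|x-y|^{N+s(x,y)p}}\right)dx,\]
so the task becomes bounding the inner integral by a constant independent of $x$. The \emph{main obstacle} is precisely that the exponent $s(x,y)$ varies, so one cannot simply integrate a single power of $|x-y|$; I would handle this by passing to the translated variable $z=y-x$, enlarging the domain to $\{|z|\geq\delta_1\}\subset\mathbb{R}^N$, and splitting into $\{|z|\geq 1\}$ and $\{\delta_1\leq|z|<1\}$. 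On the former region $|z|^{-(N+s(x,y)p)}\leq|z|^{-(N+s^-p)}$ and the radial integral converges because $s^-p>0$; on the latter $|z|^{-(N+s(x,y)p)}\leq|z|^{-(N+s^+p)}$ and the integral is finite since the domain is bounded away from the origin (when $\delta_1\geq1$ only the first region occurs). This yields a bound $\int_{\{|z|\geq\delta_1\}}|z|^{-(N+s(x,y)p)}\,dz\leq C'$ for a constant $C'=C'(N,s^-,s^+,p,\delta_1)$, and therefore $I_{\mathrm{far}}\leq 2^p C'\|u\|_{L^p(\Omega)}^p$.

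It remains to control $\|u\|_{L^p(\Omega)}^p$ by the $L^2$ mass supplied by the hypothesis. Since $1\leq p<2$, Hölder's inequality with exponents $2/p$ and $2/(2-p)$ gives $\int_{\Omega}|u|^p\,dx\leq\big(\int_{\Omega}|u|^2\,dx\big)^{p/2}|\Omega|^{1-p/2}\leq C_0^{p/2}|\Omega|^{1-p/2}$, which is where the dependence on $|\Omega|$ (and the implicit finiteness of its measure, as in the bounded image domains of interest) enters. Adding the $L^2$ bound $\int_{\Omega}|u|^2\,dx\leq C_0$ and combining the estimates for $I_{\mathrm{near}}$ and $I_{\mathrm{far}}$ produces the claimed inequality with a constant of the form $C(C_1,C_0,\delta_1,N,|\Omega|)$, the dependence on the fixed structural parameters $s^{\pm}$ and $p$ being absorbed. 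In particular $u\in L^2(\Omega)\cap W^{s(\cdot,\cdot),p}(\Omega)$.
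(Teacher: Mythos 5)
Your proof is correct and follows essentially the same route as the paper: split the seminorm at $|x-y|=\delta_1$, absorb the near-diagonal part into the weighted integral via the lower bound $k\geq C_1\chi_{\{|x-y|<\delta_1\}}$, control the far part with $|u(y)-u(x)|^p\leq 2^{p-1}(|u(x)|^p+|u(y)|^p)$, and pass from $L^p$ to $L^2$ by H\"older on the finite-measure domain. The only (harmless) variation is in the far region, where you integrate the decaying kernel in $y$ to get a constant $C'(N,s^\pm,p,\delta_1)$, whereas the paper simply bounds the kernel pointwise by $1/\min\{1,\delta_1^{N+2}\}$ and picks up an extra factor of $|\Omega|$.
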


\begin{proof}
    We only need to show that $[u]_{W^{s(\cdot,\cdot),p}(\Omega)}$ is bounded. 
    Since $k(x,y) \geq 0$ and by Hölder's inequality, we have 
    \begin{align*} 
    \left[u\right]_{W^{s(\cdot,\cdot),p}(\Omega)}^p 
    &= \int_{\Omega} \int_{\Omega} \frac{|u(x)-u(y)|^p}{|x-y|^{N+s(x,y)p}} d x d y \\ 
    &= \iint_{\{|x-y| < \delta_1\}} \frac{|u(x)-u(y)|^p}{|x-y|^{N+s(x,y)p}} d x d y
     + \iint_{\{|x-y| \geq \delta_1\}} \frac{|u(x)-u(y)|^p}{|x-y|^{N+s(x,y)p}} d x d y \\  
    &\leq  \frac{1}{C_1} \iint_{\{|x-y| < \delta_1\}} \frac{k(x, y)}{|x-y|^{N+s(x,y)p}} |u(x)-u(y)|^p d x d y \\
    & \quad + \frac{1}{\min\big\{1,\delta_1^{N+2}\big\}} \iint_{\{|x-y| \geq \delta_1\}} |u(x)-u(y)|^p d x d y \\
    &\leq  \frac{C_0}{C_1} + \frac{2^{p-1}}{\min\big\{1,\delta_1^{N+2}\big\}} \int_{\Omega} \int_{\Omega} \left(|u(x)|^p + |u(y)|^p\right) d x d y \\
    &\leq  \frac{C_0}{C_1} + \frac{2^{p}}{\min\big\{1,\delta_1^{N+2}\big\}} \left(\int_{\Omega} |u|^2 dx\right)^{\frac{p}{2}} |\Omega|^{2-\frac{p}{2}} \\
    &\leq  \frac{C_0}{C_1} + \frac{4\big(\|u\|_{L^2(\Omega)}^2+1\big) \big(|\Omega|^2+1\big)}{\min\big\{1,\delta_1^{N+2}\big\}} \\
    &= C(C_1,C_0,\delta_1,N,|\Omega|).
    \end{align*}
    This concludes the proof.
\end{proof}

\begin{remark} \label{rmk:bounded}
    From this lemma, we know that when $1 \leq p < 2$, for a 
    sequence $\{u_k\} \subset W^{s(\cdot,\cdot),p}(\Omega)$, if $\{u_k\}$ is bounded 
    in $L^2(\Omega)$ and there exists a positive constant $C$ independent of $k=1,2, \cdots$ 
    such that
    \[\int_{\Omega} \int_{\Omega} 
    \frac{k(x,y)}{|x-y|^{N+s(x,y)p}} \left|u_k(y)-u_k(x)\right|^p dxdy \leq C, \]
    then $\{u_k\}$ is also bounded in $W^{s(\cdot,\cdot),p}(\Omega)$. 
\end{remark}

We will mainly consider the weak solution to the problem \eqref{eqn:main}, 
being $\Omega$ an open set of class $C^{0,1}$ with bounded boundary, $N \geq 2$. As in 
\cite{MAZON2016810}, the weak solution is defined as follows.

\begin{definition}
Given $f \in L^2(\Omega)$, we say that $u$ is a weak solution of problem \eqref{eqn:main}  
in $[0,T]$, if it satisfies the following conditions:
\begin{enumerate}[(i)]
    \item $u \in W^{1,1}\left(0,T;L^2(\Omega)\right)$;
    \item $u(0,\cdot)=f$; 
    \item There exists a function $\eta(t,\cdot,\cdot) \in L^{\infty}(\Omega \times \Omega)$, 
    $\eta(t,x,y)=-\eta(t,y,x)$ for almost all $(x,y) \in \Omega \times \Omega$, 
    $\|\eta(t,\cdot,\cdot)\|_{L^{\infty}(\Omega \times \Omega)} \leq 1$ and 
    \[k(x,y)\eta(t,x,y) \in k(x,y) \mathrm{sign} (u(t,y)-u(t,x))\]
    for a.e. $(x,y) \in \Omega \times \Omega$, such that for 
    any $\varphi \in W^{s(\cdot,\cdot),1}(\Omega) \cap L^2(\Omega)$, 
    the following integral equality holds:
    \begin{equation}
    \frac{1}{2} \int_{\Omega} \int_{\Omega} \frac{k(x,y)}{|x-y|^{N+s(x,y)}}
    \eta(t,x,y)(\varphi(y)-\varphi(x)) d x d y 
    =-\int_{\Omega} \frac{\partial u}{\partial t} \varphi(x) d x \label{eqn:main-weak}
    \end{equation}
    for almost all $t \in (0,T)$.
\end{enumerate}
\end{definition}

When investigating the existence of the weak solution for 
problem \eqref{eqn:main}, we focus on a simple yet important class 
of functions $s(x,y)$ that satisfy $s(x,y)=s(|x-y|)$. Such functions 
clearly satisfy the condition
\begin{equation}
    s(x-z, y-z) = s(x,y), \quad \forall x,y,z \in \Omega. \label{eqn:trin}
\end{equation}

Now we state the main result as follows.

\begin{theorem} \label{thm:main}
Let \eqref{eqn:trin} be satisfied. For every $f \in L^2(\Omega)$ there exists a unique 
weak solution of problem \eqref{eqn:main} in $[0,T]$ for any $T>0$. 
\end{theorem}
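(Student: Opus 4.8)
The plan is to recast problem~\eqref{eqn:main} as the gradient flow $u'+Au\ni 0$, $u(0)=f$, where $A$ is the subdifferential in $L^2(\Omega)$ of the energy
\[
\mathcal{J}(u)=\frac{1}{2}\int_{\Omega}\int_{\Omega}\frac{k(x,y)}{|x-y|^{N+s(x,y)}}\,|u(y)-u(x)|\,dx\,dy ,
\]
set to $+\infty$ outside $L^2(\Omega)\cap W^{s(\cdot,\cdot),1}(\Omega)$, and then to apply Theorem~\ref{thm:mild-solution}. First I would check that $\mathcal{J}$ is proper (since $\mathcal{J}(0)=0$), convex (its integrand is a convex function of the linear expression $u(y)-u(x)$), and lower semi-continuous on $L^2(\Omega)$: if $u_n\to u$ in $L^2(\Omega)$ then a subsequence converges a.e., and Fatou's lemma gives $\mathcal{J}(u)\le\liminf_n\mathcal{J}(u_n)$. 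Using the density results of Section~\ref{3} one verifies that $L^2(\Omega)\cap W^{s(\cdot,\cdot),1}(\Omega)$ is dense in $L^2(\Omega)$, so $\overline{\mathrm{Dom}(A)}=L^2(\Omega)$ and the hypothesis of Theorem~\ref{thm:mild-solution} holds for every $f\in L^2(\Omega)$.

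The heart of the proof is to identify the abstract operator $A=\partial\mathcal{J}$ with the nonlocal operator in the weak formulation~\eqref{eqn:main-weak}. Concretely, I would show that $v\in\partial\mathcal{J}(u)$ if and only if there is an antisymmetric $\eta\in L^\infty(\Omega\times\Omega)$ with $\|\eta\|_{L^\infty}\le 1$ and $k\,\eta\in k\,\mathrm{sign}(u(y)-u(x))$ a.e.\ satisfying
\[
\int_{\Omega}v\,\varphi\,dx=\frac{1}{2}\int_{\Omega}\int_{\Omega}\frac{k(x,y)}{|x-y|^{N+s(x,y)}}\,\eta(x,y)\bigl(\varphi(y)-\varphi(x)\bigr)\,dx\,dy
\]
for every $\varphi\in W^{s(\cdot,\cdot),1}(\Omega)\cap L^2(\Omega)$. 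The inclusion ``$\Leftarrow$'' is the easy half: from $\eta\,b\le|b|$ (as $|\eta|\le1$) and $\eta(x,y)(u(y)-u(x))=|u(y)-u(x)|$ on $\{k>0\}$, applied with $b=w(y)-w(x)$, subtracting and integrating against the kernel yields the subgradient inequality $\mathcal{J}(w)-\mathcal{J}(u)\ge\int_\Omega v\,(w-u)\,dx$ for all $w\in L^2(\Omega)$, so the explicit operator is contained in $\partial\mathcal{J}$ and is monotone.

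The opposite inclusion --- equivalently, solving the resolvent problem $u-\lambda\int_\Omega \frac{k(x,y)}{|x-y|^{N+s(x,y)}}\eta(x,y)\,dy=g$ together with the construction of the field $\eta$ --- is where I expect the main obstacle, and where the machinery of Sections~\ref{3} and~\ref{4} is essential. For fixed $g\in L^2(\Omega)$ the minimizer $u$ of $\tfrac12\|u-g\|_{L^2(\Omega)}^2+\lambda\mathcal{J}(u)$ exists and is unique by the direct method (strict convexity together with the weak lower semi-continuity of $\mathcal{J}$), giving $g-u\in\lambda\partial\mathcal{J}(u)$; the difficulty is to represent this subgradient by an admissible $\eta$. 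My plan is to approximate by the variable-order fractional $p$-Laplacian of Section~\ref{4}: for $p\in(1,2)$ the operators $A_p$ are single-valued and $m$-completely accretive, and the associated fluxes $\frac{k(x,y)}{|x-y|^{N+s(x,y)p}}|u_p(y)-u_p(x)|^{p-2}(u_p(y)-u_p(x))$, after the natural normalization, are bounded and admit a weak-$*$ limit $\eta$ in $L^\infty$ with $\|\eta\|_{L^\infty}\le1$ as $p\to1^+$. The two delicate points are controlling the variable-order kernel $|x-y|^{-(N+s(x,y)p)}$ uniformly as $p\to1^+$ so that the limiting identity carries the correct weight $|x-y|^{-(N+s(x,y))}$, for which the uniform bounds of Lemma~\ref{lem:bounded} and Remark~\ref{rmk:bounded} are used, and identifying the sign condition $k\,\eta\in k\,\mathrm{sign}(u(y)-u(x))$ through a Minty-type monotonicity argument; the density of smooth functions in $W^{s(\cdot,\cdot),1}(\Omega)$ from Section~\ref{3} is what lets the limiting integral identity be extended from a dense class of test functions to all $\varphi\in W^{s(\cdot,\cdot),1}(\Omega)\cap L^2(\Omega)$.

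With the characterization established, $A=\partial\mathcal{J}$ is maximal monotone, so $R(I+\lambda A)=L^2(\Omega)$ for all $\lambda>0$, while complete accretivity follows from the Bénilan--Crandall criterion: for $v_i\in Au_i$ and $q\in\mathbf{P}_0$, the antisymmetry of $\eta_i$ and the symmetry of the kernel give, after symmetrization,
\[
\int_{\Omega}(v_1-v_2)\,q(u_1-u_2)\,dx=\tfrac12\int_{\Omega}\int_{\Omega}\frac{k(x,y)}{|x-y|^{N+s(x,y)}}(\eta_1-\eta_2)\bigl(q(w(y))-q(w(x))\bigr)\,dx\,dy\ge0,
\]
where $w=u_1-u_2$ and the nonnegativity comes from the monotonicity of $q$ together with the sign graph $\eta_i\in\mathrm{sign}(u_i(y)-u_i(x))$. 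Hence $A$ is $m$-completely accretive, Theorem~\ref{thm:mild-solution} yields a unique strong solution $u$, and reading the defining identity of $\partial\mathcal{J}$ at $v=-\partial_t u$ shows that $u$ is exactly the weak solution, with $\eta(t,\cdot,\cdot)$ the field associated to $u(t,\cdot)$; uniqueness is inherited from the contraction property of the semigroup.
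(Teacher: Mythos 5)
Your proposal is correct and follows essentially the same route as the paper: the energy functional $\mathcal{J}$ is exactly the paper's $\mathcal{B}_1^{s(\cdot,\cdot)}$, the range condition is obtained by letting $p\to 1^+$ in the variable-order fractional $p$-Laplacian of Section~\ref{4} (the paper's ``natural normalization'' being the explicit choice $s_p(x,y)=N-\tfrac{N}{p}+s(x,y)$, so that $N+s_p(x,y)p=(N+s(x,y))p$), with the sign condition on $\eta$ recovered by a lower semicontinuity/monotonicity argument and the test class enlarged via the density Theorem~\ref{thm:density}, before invoking Theorem~\ref{thm:mild-solution}. The only cosmetic difference is that the paper proves complete accretivity of $B_1^{s(\cdot,\cdot)}$ separately (Theorem~\ref{thm:vf1l-ca}) before the range condition, whereas you fold it in afterwards; the substance is identical.
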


The proof of Theorem \ref{thm:main} is given in Section \ref{5}. To complete the proof, 
we employ a method similar to that in \cite{MAZON2016810}, which involves firstly 
using Nonlinear Semigroup Theory to establish the well-posedness of variable-order 
fractional $p$-Laplacian evolution equations when $1<p<2$, and then proving the case
$p=1$ through an approximation method. In order to overcome the difficulties encountered 
in the approximation process, we place particular emphasis on studying the density 
properties of the space $W^{s(\cdot,\cdot),p}(\Omega)$ in the next section.

\section{Basic results of the spaces \texorpdfstring{$W^{s(\cdot,\cdot),p}(\Omega)$}{Lg}} \label{3} 

In this section, we prove a series of basic results of variable-order fractional 
Sobolev spaces, which will be utilized hereafter. We refer to 
\cite{DINEZZA2012521} and the references 
therein for more basic results of the usual fractional Sobolev spaces.
To begin with, we prove the embedding theorem of the 
spaces $W^{s(\cdot,\cdot),p}(\Omega)$.

\begin{lemma} \label{lem:subspace-embedding}
    Let $\Omega$ be a bounded open set in $\mathbb{R}^N$. Let $s_1$ be a constant  
    with $s^- \leq s_1 \leq s(x,y)$ a.e. on $\Omega \times \Omega$. Then 
    the space $W^{s(\cdot,\cdot),p}(\Omega)$ is continuously 
    embedded in $W^{s_1,p}(\Omega)$. 
\end{lemma}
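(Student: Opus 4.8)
The plan is to show that the $W^{s(\cdot,\cdot),p}$-norm dominates the $W^{s_1,p}$-norm up to a multiplicative constant depending only on the data. Since both spaces share the same $L^p(\Omega)$ component in their respective norms, it suffices to control the fixed-order Gagliardo seminorm $[u]_{W^{s_1,p}(\Omega)}$ by the variable-order seminorm $[u]_{W^{s(\cdot,\cdot),p}(\Omega)}$, i.e.\ to prove a pointwise-in-$(x,y)$ comparison of the two kernels.

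First I would compare the integrands. Fix $(x,y) \in \Omega \times \Omega$ with $x \neq y$ and write $r = |x-y|$. The only difference between the two seminorm densities is the exponent on $r$ in the denominator, so the task reduces to bounding $r^{-(N+s_1 p)}$ by a constant times $r^{-(N+s(x,y)p)}$, equivalently bounding $r^{(s(x,y)-s_1)p}$ by a constant. Because $s(x,y) \geq s_1$ a.e., the exponent $(s(x,y)-s_1)p$ is nonnegative, and since $\Omega$ is bounded, $r = |x-y| \leq \operatorname{diam}(\Omega) =: d$. I would split into two cases according to whether $d \leq 1$ or $d > 1$. When $d \leq 1$ we have $r \leq 1$ and a nonnegative exponent gives $r^{(s(x,y)-s_1)p} \leq 1$ directly. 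When $d > 1$, using $0 \leq (s(x,y)-s_1) \leq s^+ - s_1 \leq s^+$, the quantity $r^{(s(x,y)-s_1)p}$ is bounded above by $\max\{1, d^{s^+ p}\}$. In either case I obtain a uniform constant $C = C(\operatorname{diam}(\Omega), s^+, s_1, p)$ with
\[
\frac{1}{|x-y|^{N+s_1 p}} \leq C \, \frac{1}{|x-y|^{N+s(x,y)p}}
\quad \text{for a.e.\ } (x,y) \in \Omega \times \Omega.
\]

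Next I would multiply this pointwise inequality by $|u(y)-u(x)|^p \geq 0$ and integrate over $\Omega \times \Omega$, yielding $[u]_{W^{s_1,p}(\Omega)}^p \leq C\,[u]_{W^{s(\cdot,\cdot),p}(\Omega)}^p$. Taking $p$-th roots and adding the common $\|u\|_{L^p(\Omega)}$ term then gives $\|u\|_{W^{s_1,p}(\Omega)} \leq C' \|u\|_{W^{s(\cdot,\cdot),p}(\Omega)}$ for a suitable constant $C'$, which establishes the continuous embedding; in particular every $u \in W^{s(\cdot,\cdot),p}(\Omega)$ indeed lies in $W^{s_1,p}(\Omega)$.

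This argument is essentially elementary, so I do not expect a genuine obstacle; the only point requiring care is the behaviour of the kernel comparison near the diagonal $x = y$, where $r \to 0^+$. The worry would be an exponent of the wrong sign blowing up as $r \to 0$, but the hypothesis $s_1 \leq s(x,y)$ is precisely what makes $(s(x,y)-s_1)p \geq 0$, so $r^{(s(x,y)-s_1)p}$ stays bounded (by $1$) as $r \to 0$ rather than exploding. Thus the diagonal causes no trouble, and the boundedness of $\Omega$ handles the complementary regime where $r$ is bounded away from $0$ but possibly larger than $1$. The constant $C'$ depends on $\operatorname{diam}(\Omega)$, $s^+$, $s_1$, and $p$, all of which are fixed, so the embedding constant is uniform over $u$.
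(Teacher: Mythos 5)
Your argument is correct and is essentially the paper's own proof: both rest on the pointwise kernel comparison $|x-y|^{-(N+s_1p)} \leq C\,|x-y|^{-(N+s(x,y)p)}$, obtained from the fact that $|x-y|^{(s(x,y)-s_1)p}$ is uniformly bounded on $\Omega\times\Omega$ because the exponent is nonnegative and $\Omega$ is bounded (the paper simply takes $C=\sup_{(x,y)}|x-y|^{s(x,y)-s_1}$ rather than writing out the $\max\{1,d^{s^+p}\}$ bound). Your remark that the hypothesis $s_1\le s(x,y)$ is exactly what tames the kernel near the diagonal correctly identifies the one point of the lemma that requires care.
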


\begin{proof}
    Let $u \in W^{s(\cdot,\cdot),p}(\Omega)$ and 
    \[C = \sup_{(x,y) \in \Omega \times \Omega} |x-y|^{s(x,y)-s_1},\]
    then
    \[\begin{aligned}    
    \frac{1}{C^p} \int_{\Omega} \int_{\Omega} \frac{|u(x)-u(y)|^p}{|x-y|^{N+s_1p}} d x d y  
    &= \int_{\Omega} \int_{\Omega} \frac{|u(x)-u(y)|^p}{|x-y|^{N+s(x,y)p}} \left(\frac{|x-y|^{s(x,y)-s_1}}{C}\right)^p  d x d y \\ 
    &\leq \int_{\Omega} \int_{\Omega} \frac{|u(x)-u(y)|^p}{|x-y|^{N+s(x,y)p}} d x d y\\ 
    &= \left[u\right]_{W^{s(\cdot,\cdot),p}(\Omega)}^p, 
    \end{aligned}\]
    therefore
    \[\left[u\right]_{W^{s_1,p}(\Omega)} \leq C\left[u\right]_{W^{s(\cdot,\cdot),p}(\Omega)},\]
    and further there exists a positive constant $C=C(N, s, s_1, \Omega)$ such that
    \[\|u\|_{W^{s_1,p}(\Omega)} \leq C\|u\|_{W^{s(\cdot,\cdot),p}(\Omega)}.\]
    This concludes the proof.
\end{proof}

\begin{theorem} \label{thm:embedding}
    Let $\Omega$ be an open set in $\mathbb{R}^N$ of class $C^{0,1}$ with bounded boundary, 
    for all $(x,y) \in \overline{\Omega} \times \overline{\Omega}$, $s(x,y)p < N$ and $1 \leq q < p_{s(x,y)}^* = \frac{Np}{N-s(x,y)p}$. Then the 
    space $W^{s(\cdot,\cdot),p}(\Omega)$ is continuously embedded in $L^q(\Omega)$. Moreover, 
    the embedding is compact.
\end{theorem}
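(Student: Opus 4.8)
The plan is to reduce the variable-order embedding to the classical constant-order fractional Sobolev embedding, using the smallest admissible order $s^-$ as the bridge. Because $s^- = \inf_{(x,y)} s(x,y) \le s(x,y)$ everywhere, the choice $s_1 = s^-$ is admissible in Lemma \ref{lem:subspace-embedding}, which therefore provides a continuous embedding $W^{s(\cdot,\cdot),p}(\Omega) \hookrightarrow W^{s^-,p}(\Omega)$. Before using it I would record that the constant order is subcritical: from $s^- \le s(x,y)$ and the hypothesis $s(x,y)p < N$ for every $(x,y)$, one gets $s^- p \le s(x,y) p < N$, hence $s^- p < N$, so that the classical theory applies to $W^{s^-,p}(\Omega)$.

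The key observation is the matching of the exponent ranges. Since $s \mapsto p_s^* = \frac{Np}{N - sp}$ is continuous and strictly increasing on $[0, N/p)$, and $\inf_{(x,y)} s(x,y) = s^-$, we have $\inf_{(x,y)} p_{s(x,y)}^* = p_{s^-}^*$. Hence the hypothesis ``$q < p_{s(x,y)}^*$ for all $(x,y)$'' is exactly equivalent to $q < p_{s^-}^*$. In other words, the smallest order $s^-$ produces the most restrictive (smallest) critical exponent, and it is precisely this exponent against which the theorem's range is calibrated.

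With this in hand I would invoke the classical fractional Sobolev embedding for the constant order $s^-$ (see \cite{DINEZZA2012521} and the references therein): for $\Omega$ of class $C^{0,1}$ with bounded boundary and $s^- p < N$, the embedding $W^{s^-,p}(\Omega) \hookrightarrow L^q(\Omega)$ is continuous for $1 \le q \le p_{s^-}^*$ and compact for $1 \le q < p_{s^-}^*$. Composing with the embedding from Lemma \ref{lem:subspace-embedding} yields $W^{s(\cdot,\cdot),p}(\Omega) \hookrightarrow W^{s^-,p}(\Omega) \hookrightarrow L^q(\Omega)$, which is continuous on the stated range and compact for $q < p_{s^-}^* = \inf_{(x,y)} p_{s(x,y)}^*$, as required.

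The step I expect to require the most care is the passage through Lemma \ref{lem:subspace-embedding}: the constant $C = \sup_{(x,y)} |x-y|^{\,s(x,y)-s^-}$ appearing there is finite only because $|x-y|$ stays bounded on $\Omega$, which is what lets one control the far-from-diagonal part of the $W^{s^-,p}$ seminorm by the variable-order seminorm together with the $L^p$ norm. I would therefore make sure the boundedness of $\Omega$ underlying that lemma is in force, so that the reduction to constant order is legitimate; granting this, the remainder is a clean two-step composition of the variable-to-constant-order estimate and the classical (compact) embedding.
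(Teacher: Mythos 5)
Your strategy is the same as the paper's: reduce to a constant order via Lemma \ref{lem:subspace-embedding} and then invoke the classical fractional Sobolev embedding of \cite{DINEZZA2012521}, with compactness inherited from the constant-order case. The architecture is sound, and your closing remark about the boundedness of $\Omega$ needed for the constant in Lemma \ref{lem:subspace-embedding} is a legitimate point (the theorem as stated only assumes a bounded boundary, and the paper glosses over this as well).

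There is, however, one step that does not survive a literal reading. You take the bridging order to be $s^- = \inf_{(x,y)\in\mathbb{R}^N\times\mathbb{R}^N} s(x,y)$ and assert that the hypothesis ``$q < p^*_{s(x,y)}$ for all $(x,y)\in\overline{\Omega}\times\overline{\Omega}$'' is \emph{equivalent} to $q < p^*_{s^-}$. It is not: the hypothesis constrains $s$ only on $\overline{\Omega}\times\overline{\Omega}$, while $s^-$ is the global infimum, which may be strictly smaller than $\min_{\overline{\Omega}\times\overline{\Omega}} s$. If, say, $s \geq 1/2$ on $\overline{\Omega}\times\overline{\Omega}$ but $s^- = 1/10$, the hypothesis permits values of $q$ up to $p^*_{1/2}$, which exceeds $p^*_{1/10}$; then $W^{s^-,p}(\Omega)$ need not embed into $L^q(\Omega)$ and your composition breaks. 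Only one implication of your claimed equivalence holds (namely $q < p^*_{s^-}$ forces $q < p^*_{s(x,y)}$), and it is the direction you do not need. The repair is exactly what the paper does: take $s_1 = \min_{(x,y)\in\overline{\Omega}\times\overline{\Omega}} s(x,y)$, which is attained by continuity once $\Omega$ is bounded, observe that $s_1$ is admissible in Lemma \ref{lem:subspace-embedding} and that the hypothesis does give $q < p^*_{s_1}$, and run your two-step composition with $s_1$ in place of $s^-$. With that substitution your argument coincides with the paper's.
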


\begin{proof}
    Since $q < p_{s(x,y)}^* = \frac{Np}{N-s(x,y)p}$ for all $(x,y) \in \overline{\Omega} \times \overline{\Omega}$, 
    we know that $q < p_{s_1}^* = \frac{Np}{N-s_1p}$, where $s_1 = \underset{(x,y) \in \overline{\Omega} \times \overline{\Omega}}{\mathrm{min}}~s(x,y)$. 
    By Lemma \ref{lem:subspace-embedding} we know that $u \in W^{s_1,p}(\Omega)$, and there 
    exists a positive constant $C=C(N, s, s_1, \Omega)$ such that   
    \begin{equation} \label{eqn:subspace-em}
        \|u\|_{W^{s_1,p}(\Omega)} \leq C\|u\|_{W^{s(\cdot,\cdot),p}(\Omega)}.
    \end{equation}
    In view of the usual fractional Sobolev embedding theorem 
    (see \cite[Corollary 7.2]{DINEZZA2012521}), there 
    exists a positive constant $C=C(N, p, s_1, \Omega)$ such that
    \begin{equation} \label{eqn:fractional-em}
        \|u\|_{L^q(\Omega)} \leq C \|u\|_{W^{s_1,p}(\Omega)}.
    \end{equation} 
    Combining \eqref{eqn:subspace-em} and \eqref{eqn:fractional-em}, we know that 
    there exists a positive constant $C=C(N, p, s_1, \Omega)$ such that
    \[\|u\|_{L^q(\Omega)} \leq C \|u\|_{W^{s(\cdot,\cdot),p}(\Omega)}.\]
    
    Finally, we prove the compactness of this embedding. Let $\{u_k\}$ be 
    a bounded sequence in $W^{s(\cdot,\cdot),p}(\Omega)$, then $\{u_k\}$ is 
    also bounded in $W^{s_1,p}(\Omega)$. Since $q < p_{s_1}^*$, using the usual fractional Sobolev 
    compact embedding theorem, we know that $W^{s_1,p}(\Omega)$ is compactly embedded 
    in $L^q(\Omega)$. Therefore, $\{u_k\}$ has a convergent subsequence in $L^q(\Omega)$. 
    This fact concludes the proof. 
\end{proof}

Next, we prove the variable-order fractional Poincaré-Wirtinger inequality.

\begin{theorem} \label{thm:poincare}
    Let $\Omega$ be an open set in $\mathbb{R}^N$ of class $C^{0,1}$ with bounded boundary, 
    and let $u \in W^{s(\cdot,\cdot),p}(\Omega)$, then there exists a 
    constant $C=C(N, s, p, \Omega)$ such that 
    \[\int_{\Omega} \left|u(x)-u_{\Omega}\right| dx 
    \leq C [u]_{W^{s(\cdot,\cdot),p}(\Omega)},\]
    where $u_{\Omega} = \displaystyle \frac{1}{|\Omega|}\int_{\Omega} u dx$ is 
    the mean value of $u$ in $\Omega$.
\end{theorem}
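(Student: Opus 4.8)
The plan is to avoid any appeal to the classical constant-order Poincaré–Wirtinger inequality and instead estimate the mean deviation directly by a double integral that exposes the Gagliardo seminorm after a single application of Hölder's inequality. First I would rewrite the left-hand side using only the definition of the mean value: since $u(x)-u_{\Omega}=\frac{1}{|\Omega|}\int_{\Omega}(u(x)-u(y))\,dy$, the triangle (or Jensen) inequality gives
\[
\int_{\Omega}|u(x)-u_{\Omega}|\,dx \le \frac{1}{|\Omega|}\int_{\Omega}\int_{\Omega}|u(x)-u(y)|\,dx\,dy .
\]
This elementary step reduces everything to controlling the ``flat'' double integral $\iint_{\Omega\times\Omega}|u(x)-u(y)|$ by the seminorm $[u]_{W^{s(\cdot,\cdot),p}(\Omega)}$, and notably it requires neither connectedness nor boundary regularity of $\Omega$, only finiteness of $|\Omega|$.

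Second I would insert the Gagliardo weight and apply Hölder's inequality. Writing
\[
|u(x)-u(y)| = \frac{|u(x)-u(y)|}{|x-y|^{\frac{N}{p}+s(x,y)}}\cdot |x-y|^{\frac{N}{p}+s(x,y)},
\]
Hölder's inequality with exponents $p$ and $p'=p/(p-1)$ yields
\[
\iint_{\Omega\times\Omega}|u(x)-u(y)|\,dx\,dy \le [u]_{W^{s(\cdot,\cdot),p}(\Omega)}\left(\iint_{\Omega\times\Omega}|x-y|^{\left(\frac{N}{p}+s(x,y)\right)p'}\,dx\,dy\right)^{1/p'},
\]
since the first factor is exactly the Gagliardo seminorm. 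The decisive point is that the remaining weight integral is finite and uniformly controlled: recalling that $\Omega$ is bounded, so $|x-y|\le \mathrm{diam}(\Omega)$ and $|\Omega|<\infty$, and that the exponent $\left(\tfrac{N}{p}+s(x,y)\right)p'$ is positive and bounded above by $\left(\tfrac{N}{p}+s^{+}\right)p'$, the integrand is dominated by $\max\{1,\mathrm{diam}(\Omega)^{(\frac{N}{p}+s^{+})p'}\}$. Combining the two displays produces the asserted estimate with a constant $C=C(N,s,p,\Omega)$ depending only on $N$, $p$, the bounds $s^{\pm}$ on the variable order, and the measure and diameter of $\Omega$.

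The case $p=1$ fits the same scheme with $p'=\infty$: one bounds $|x-y|^{N+s(x,y)}\le \max\{1,\mathrm{diam}(\Omega)^{N+s^{+}}\}$ pointwise and factors it out of $\iint |u(x)-u(y)|/|x-y|^{N+s(x,y)}$, recovering $[u]_{W^{s(\cdot,\cdot),1}(\Omega)}$ directly. I expect the only genuine subtlety to be the finiteness and uniform control of the weight integral, which crucially exploits both the boundedness of $\Omega$ (keeping $|x-y|$ bounded) and the uniform upper bound $s(x,y)\le s^{+}<1$; everything else is routine. As a fallback, should one prefer a more classical route, I could instead invoke Lemma \ref{lem:subspace-embedding} with the constant order $s_{1}=s^{-}$ to get $[u]_{W^{s_{1},p}(\Omega)}\le C[u]_{W^{s(\cdot,\cdot),p}(\Omega)}$, then apply the constant-order fractional Poincaré–Wirtinger inequality together with $\|\cdot\|_{L^{1}(\Omega)}\le |\Omega|^{1-1/p}\|\cdot\|_{L^{p}(\Omega)}$; but the direct argument above is shorter, self-contained, and avoids any connectedness hypothesis.
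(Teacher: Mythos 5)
Your proof is correct, and it takes a genuinely different route from the paper. The paper first invokes its Lemma \ref{lem:subspace-embedding} to embed $W^{s(\cdot,\cdot),p}(\Omega)$ into the constant-order space $W^{s^-,p}(\Omega)$, and then cites the classical fractional Poincar\'e--Wirtinger inequality from the literature to conclude; your argument instead proves the estimate from scratch via the identity $u(x)-u_{\Omega}=\frac{1}{|\Omega|}\int_{\Omega}(u(x)-u(y))\,dy$, followed by one application of H\"older's inequality that isolates the variable-order Gagliardo seminorm, with the complementary weight integral $\iint_{\Omega\times\Omega}|x-y|^{(\frac{N}{p}+s(x,y))p'}\,dx\,dy$ controlled pointwise by $\max\bigl\{1,\mathrm{diam}(\Omega)^{(\frac{N}{p}+s^{+})p'}\bigr\}$; the $p=1$ case with $p'=\infty$ is handled correctly as a degenerate instance of the same bound. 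What your approach buys is self-containedness (no appeal to the external constant-order inequality), an explicit constant, and independence from any connectedness or John-domain hypothesis hidden in the cited reference --- the full double integral over $\Omega\times\Omega$ sees pairs in different components, so the usual obstruction to Poincar\'e on disconnected sets does not arise. What the paper's route buys is brevity and reuse of Lemma \ref{lem:subspace-embedding}, which it has already established for the embedding theorem. One shared caveat: both arguments (and indeed the very definition of $u_{\Omega}$) implicitly require $|\Omega|<\infty$, hence $\Omega$ bounded, which is slightly stronger than the stated hypothesis of a bounded boundary; your explicit reliance on $\mathrm{diam}(\Omega)<\infty$ merely makes visible an assumption the paper also uses.
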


\begin{proof}
    By Lemma \ref{lem:subspace-embedding} we know that $u \in W^{s^-,p}(\Omega)$, and there 
    exists a positive constant $C=C(N, s, \Omega)$ such that   
    \begin{equation} \label{eqn:subspace-em-2}
        [u]_{W^{s^-,p}(\Omega)} \leq C [u]_{W^{s(\cdot,\cdot),p}(\Omega)}.
    \end{equation}
    In view of the usual fractional Poincaré-Wirtinger 
    inequality (see \cite{HURRI201385}), there 
    exists a positive constant $C=C(N, p, s^-, \Omega)$ such that
    \begin{equation} \label{eqn:fractional-poincare}
        \int_{\Omega} \left|u(x)-u_{\Omega}\right| dx \leq C [u]_{W^{s^-,p}(\Omega)}.
    \end{equation} 
    Combining \eqref{eqn:subspace-em-2} and \eqref{eqn:fractional-poincare}, we know that 
    there exists a positive constant $C=C(N, p, s, \Omega)$ such that
    \[\int_{\Omega} \left|u(x)-u_{\Omega}\right| dx 
    \leq C [u]_{W^{s(\cdot,\cdot),p}(\Omega)},\]
    which concludes the proof.
\end{proof}

In the remaining part of this section, we establish a result concerning 
the density property of the spaces $W^{s(\cdot,\cdot),p}(\Omega)$. The proof of 
the density property is mainly based on a basic technique of convolution, joined 
with a cut-off, see \cite{MCLEAN2000, FISCELLA2015235}. We first present the 
properties related to the cut-off function.

\begin{lemma} \label{lem:cutoff}
    Let $\Omega \subset \mathbb{R}^N$ be an open set and let  
    $u \in W^{s(\cdot,\cdot),p}(\Omega)$. 
    Let $\tau \in C^{\infty}(\mathbb{R}^N)$ be compactly 
    supported, then $\tau u \in W^{s(\cdot,\cdot),p}(\Omega)$.  
    In addition, if $\operatorname{supp} \tau \subset \Omega$, 
    then $\tau u \in W^{s(\cdot,\cdot),p}(\mathbb{R}^N)$.
\end{lemma}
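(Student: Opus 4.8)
The plan is to verify the two defining conditions of membership in $W^{s(\cdot,\cdot),p}$ separately: the $L^p$ bound and the finiteness of the Gagliardo seminorm. Since $\tau \in C^{\infty}(\mathbb{R}^N)$ has compact support, both $\|\tau\|_{L^\infty}$ and the Lipschitz constant $L := \|\nabla\tau\|_{L^\infty}$ are finite. The $L^p$ bound is immediate, as $\|\tau u\|_{L^p(\Omega)} \leq \|\tau\|_{L^\infty}\|u\|_{L^p(\Omega)}$. So everything reduces to estimating the seminorm, and the basic device is the discrete product rule $\tau(x)u(x)-\tau(y)u(y) = \tau(x)\bigl(u(x)-u(y)\bigr) + u(y)\bigl(\tau(x)-\tau(y)\bigr)$, followed by the elementary inequality $|a+b|^p \leq 2^{p-1}(|a|^p+|b|^p)$.

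This splits $[\tau u]_{W^{s(\cdot,\cdot),p}(\Omega)}^p$ into two pieces. The first piece is controlled directly by $\|\tau\|_{L^\infty}^p\,[u]_{W^{s(\cdot,\cdot),p}(\Omega)}^p$, which is finite by hypothesis. For the second piece, $\iint_{\Omega\times\Omega} |u(y)|^p |\tau(x)-\tau(y)|^p |x-y|^{-N-s(x,y)p}\,dx\,dy$, I would split the domain according to whether $|x-y|<1$ or $|x-y|\geq 1$. On the near-diagonal region I use the Lipschitz bound $|\tau(x)-\tau(y)| \leq L|x-y|$, which converts the kernel into $|x-y|^{p(1-s(x,y))-N}$; because $s(x,y)\leq s^+<1$, this exponent stays $\geq p(1-s^+)-N > -N$, so the inner integral in $x$ over $\{|x-y|<1\}$ is bounded by a constant depending only on $N,p,s^+$, uniformly in $y$. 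On the far region I use $|\tau(x)-\tau(y)|\leq 2\|\tau\|_{L^\infty}$ together with $s(x,y)\geq s^->0$, so the kernel is dominated by $|x-y|^{-N-s^-p}$, whose integral over $\{|x-y|\geq 1\}$ is finite. In both cases the remaining factor $|u(y)|^p$ integrates to $\|u\|_{L^p(\Omega)}^p$, which establishes the first assertion.

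For the second assertion I would extend $\tau u$ by zero to all of $\mathbb{R}^N$, which is legitimate because $\operatorname{supp}\tau \subset \Omega$, so $\tau u$ is supported in a compact subset of $\Omega$. Writing $w$ for this extension and decomposing $\mathbb{R}^N\times\mathbb{R}^N$ into the four blocks $\Omega\times\Omega$, $\Omega\times\Omega^c$, $\Omega^c\times\Omega$, $\Omega^c\times\Omega^c$, the block $\Omega\times\Omega$ contributes exactly $[\tau u]_{W^{s(\cdot,\cdot),p}(\Omega)}^p$ (finite by the first part), the block $\Omega^c\times\Omega^c$ contributes zero since $w$ vanishes there, and the two cross blocks are equal by symmetry.

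The hard part will be the cross block $\Omega\times\Omega^c$, where $w(y)=0$ forces the integrand to equal $|\tau(x)u(x)|^p |x-y|^{-N-s(x,y)p}$. The crucial observation is that $d := \operatorname{dist}(\operatorname{supp}\tau,\,\mathbb{R}^N\setminus\Omega) > 0$, since $\operatorname{supp}\tau$ is compact and disjoint from the closed set $\mathbb{R}^N\setminus\Omega$. Hence the integrand vanishes unless $x\in\operatorname{supp}\tau$, in which case $|x-y|\geq d$ for every $y\in\Omega^c$; the inner integral $\int_{\{|x-y|\geq d\}} |x-y|^{-N-s(x,y)p}\,dy$ is then a finite constant depending only on $N,p,s^\pm,d$ (using $s\geq s^->0$ to control the tail and the boundedness of the kernel away from the diagonal). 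Pulling this constant out leaves $\int_{\operatorname{supp}\tau}|\tau u|^p\,dx \leq \|\tau\|_{L^\infty}^p\|u\|_{L^p(\Omega)}^p$, so the cross block is finite. Combining all four blocks shows $[\tau u]_{W^{s(\cdot,\cdot),p}(\mathbb{R}^N)}<\infty$, and since $\|w\|_{L^p(\mathbb{R}^N)}=\|\tau u\|_{L^p(\Omega)}$, we conclude $\tau u\in W^{s(\cdot,\cdot),p}(\mathbb{R}^N)$.
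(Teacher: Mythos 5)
Your proposal is correct and follows essentially the same route as the paper: the discrete product rule with the $2^{p-1}$ inequality, the near/far splitting of the commutator term using the Lipschitz bound of $\tau$ near the diagonal and the decay $|x-y|^{-N-s^-p}$ at infinity, and for the second claim the decomposition of $\mathbb{R}^N\times\mathbb{R}^N$ with the cross term controlled by the positive distance from $\operatorname{supp}\tau$ to $\mathbb{R}^N\setminus\Omega$. (Your use of $s^+$ to dominate the kernel on the region $|x-y|\le 1$ is the correct choice of exponent there.)
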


\begin{proof}
    It is clear that $\| \tau u \|_{L^p(\Omega)} < +\infty$ 
    since $\tau \in C_0^{\infty}(\mathbb{\mathbb{R}}^N)$. Adding and subtracting the factor 
    $\tau(x) u(y)$, we get 
    \[\begin{aligned}
     \left[\tau u\right]_{W^{s(\cdot,\cdot),p}(\Omega)} 
    & = \int_{\Omega} \int_{\Omega} \frac{|\tau(x) u(x)-\tau(y) u(y)|^p}{|x-y|^{N+s(x,y)p}} dxdy \\
    & \leq 2^{p-1} \left(\int_{\Omega} \int_{\Omega} \frac{|\tau(x) u(x)-\tau(x) u(y)|^p}{|x-y|^{N+s(x,y)p}} dxdy
      + \int_{\Omega} \int_{\Omega} \frac{|\tau(x) u(y)-\tau(y) u(y)|^p}{|x-y|^{N+s(x,y)p}} dxdy\right) \\
    & \leq 2^{p-1} \left(\int_{\Omega} \int_{\Omega} \frac{|\tau(x)|^p |u(x)- u(y)|^p}{|x-y|^{N+s(x,y)p}} dxdy
      + \int_{\Omega} \int_{\Omega} \frac{|u(x)|^p |\tau(x) -\tau(y)|^p}{|x-y|^{N+s(x,y)p}} dxdy\right).
    \end{aligned}\]  
    Since $u \in W^{s(\cdot,\cdot),p}(\Omega)$, we have 
    \begin{equation} \label{eqn:fmt}
        \int_{\Omega} \int_{\Omega} \frac{|\tau(x)|^p |u(x)- u(y)|^p}{|x-y|^{N+s(x,y)p}} dxdy 
        \leq \| \tau \|_{L^{\infty}(\Omega)}^p [u]_{W^{s(\cdot,\cdot),p}(\Omega)} < + \infty.
    \end{equation}

    On the other hand, since $\tau \in C^{1}(\Omega)$, we have 
    \begin{align} \label{eqn:lmt}
    & \int_{\Omega} \int_{\Omega} \frac{|u(x)|^p |\tau(x) -\tau(y)|^p}{|x-y|^{N+s(x,y)p}} dxdy \notag \\
    &\leq  \| \tau \|_{C^{1}(\Omega)}^p 
    \iint_{(\Omega \times \Omega) \cap \left\{(x,y):|x-y| \leq 1\right\}} \frac{|u(x)|^{p}}{|x-y|^{N+(s(x,y)-1)p}} d x d y \notag \\ 
    & \quad + \iint_{(\Omega \times \Omega) \cap \left\{(x,y):|x-y| > 1\right\}} \frac{|u(x)|^{p}}{|x-y|^{N+s(x,y)p}} d x d y \notag \\
    &\leq \| \tau \|_{C^{1}(\Omega)}^p 
    \int_{\Omega} \int_{\Omega \cap \left\{|y-x| \leq 1\right\}} \frac{|u(x)|^{p}}{|x-y|^{N+(s^--1)p}} d y d x \notag \\ 
    & \quad + \int_{\Omega}  \int_{\Omega \cap \left\{|y-x| > 1\right\}} \frac{|u(x)|^{p}}{|x-y|^{N+s^-p}} d y d x \notag \\
    &\leq  C \| u \|_{L^p(\Omega)}^p < + \infty,
    \end{align}
    where $C$ is a positive constant depends on $N$, $p$, $s^-$ and $\| \tau \|_{C^{1}(\Omega)}$. Note 
    that the last inequality follows from the fact that $|x-y|^{-N+(1-s^-)p}$ is integrable 
    with respect to $y$ if $|x-y| \leq 1$ since $N+(s^--1)p < N$ and $|x-y|^{-N-s^-p}$ is integrable  
    when $|x-y| > 1$ since $N+s^-p>N$. Combining \eqref{eqn:fmt} and \eqref{eqn:lmt}, we 
    know that $\tau u \in W^{s(\cdot,\cdot),p}(\Omega)$.
    
    In the case of $\operatorname{supp} \tau \subset \Omega$, 
    clearly $\tau u \in L^p(\mathbb{\mathbb{R}}^N)$. Hence, it remains to verify 
    that $[\tau u]_{W^{s(\cdot,\cdot),p}(\mathbb{R}^N)}$ is bounded. Using the symmetry of 
    the integral in $[\cdot]_{W^{s(\cdot,\cdot),p}(\mathbb{R}^N)}$ with respect to $x$ and 
    $y$, we can split as follows 
    \begin{equation} \label{eqn:split}
        [\tau u]_{W^{s(\cdot,\cdot),p}(\mathbb{R}^N)}^p 
    = [\tau u]_{W^{s(\cdot,\cdot),p}(\Omega)}^p 
    + 2 \int_{\Omega} \left( \int_{\mathbb{R}^N \backslash \Omega} \frac{|\tau(x) u(x)|^p}{|x-y|^{N+s(x,y)p}} d y \right) d x ,
    \end{equation}
    where the first term on the right side is finite 
    since $\tau u \in W^{s(\cdot,\cdot),p}(\Omega)$. Furthermore, set $K = \operatorname{supp} \tau u$, for 
    any $y \in \mathbb{R}^N \backslash K$,
    \[\begin{aligned} 
        \frac{|\tau(x) u(x)|^p}{|x-y|^{N+s(x,y)p}}  
    & = \chi_{K}(x)|\tau(x) u(x)|^p \frac{1}{|x-y|^{N+s(x,y)p}}  \\
    & \leq \chi_{K}(x) |\tau(x) u(x)|^p \sup_{x \in {K} } \frac{1}{|x-y|^{N+s(x,y)p}} \\ 
    & \leq \chi_{K}(x) |\tau(x) u(x)|^p \dfrac{1}{\min \left\{ \mathrm{dist}(y, \partial K)^{N+s^-p},\mathrm{dist}(y, \partial K)^{N+s^+p} \right\}}.  
    \end{aligned}\]
    Since $\mathrm{dist}(\partial \Omega, \partial K) \geq \alpha > 0$ and $N+s^+p \geq N+s^-p > N$, we get
    \begin{equation} \label{eqn:upperbound}
        \int_{\Omega} \left( \int_{\mathbb{R}^N \backslash \Omega} \frac{|\tau(x) u(x)|^p}{|x-y|^{N+s(x,y)p}} d y \right) d x < + \infty.
    \end{equation}
    Combining \eqref{eqn:split} and \eqref{eqn:upperbound}, we know that $\tau u \in W^{s(\cdot,\cdot),p}(\mathbb{R}^N)$.
\end{proof}

For any $\delta > 0$ and function $u$ we set $u_{\delta}(x) = u(x', x_{N}-\delta)$. The 
following lemma presents a property pertaining to the translations. 

\begin{lemma} \label{lem:tra}
    Let \eqref{eqn:trin} be satisfied and $u \in W^{s(\cdot,\cdot),p}(\mathbb{R}^N)$, 
    then $\|u-u_{\delta}\|_{W^{s(\cdot,\cdot),p}(\mathbb{R}^N)} \to 0$ as $\delta \to 0$.
\end{lemma}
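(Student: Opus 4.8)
The plan is to split the norm $\|u-u_{\delta}\|_{W^{s(\cdot,\cdot),p}(\mathbb{R}^N)}$ into its two constituent pieces, the $L^p$ norm $\|u-u_{\delta}\|_{L^p(\mathbb{R}^N)}$ and the Gagliardo seminorm $[u-u_{\delta}]_{W^{s(\cdot,\cdot),p}(\mathbb{R}^N)}$, and to show each tends to $0$ as $\delta \to 0$. The first piece is immediate: since $u_{\delta}$ is the translate of $u$ by the vector $\delta e_N$ and $u \in L^p(\mathbb{R}^N)$ with $1 \leq p < \infty$, the continuity of translations in $L^p$ gives $\|u-u_{\delta}\|_{L^p(\mathbb{R}^N)} \to 0$.

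The main work is the seminorm term, and the key device is to encode the double integral as a single $L^p$ norm on $\mathbb{R}^{2N}$. I would introduce the map $F_u(x,y) = (u(x)-u(y))\,|x-y|^{-N/p - s(x,y)}$, so that $[v]_{W^{s(\cdot,\cdot),p}(\mathbb{R}^N)}^p = \|F_v\|_{L^p(\mathbb{R}^N \times \mathbb{R}^N)}^p$ for every admissible $v$, and note that $F$ is linear in its argument, whence $F_{u-u_{\delta}} = F_u - F_{u_{\delta}}$. The crucial step is to compute $F_{u_{\delta}}$: writing $u_{\delta}(x) = u(x-\delta e_N)$ and changing variables $x \mapsto x - \delta e_N$, $y \mapsto y - \delta e_N$ inside $F_{u_{\delta}}(x,y)$, the difference $|x-y|$ is unchanged and, by hypothesis \eqref{eqn:trin} applied with $z = \delta e_N$, the exponent $s(x,y)$ is also unchanged. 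This identifies $F_{u_{\delta}}(x,y) = F_u(x-\delta e_N, y-\delta e_N)$, i.e. $F_{u_{\delta}}$ is exactly the translate of $F_u \in L^p(\mathbb{R}^{2N})$ by the vector $(\delta e_N, \delta e_N) \in \mathbb{R}^{2N}$.

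With this identity in hand, I would conclude by again invoking continuity of translations, now in $L^p(\mathbb{R}^{2N})$: since $F_u \in L^p(\mathbb{R}^{2N})$ (its norm is precisely $[u]_{W^{s(\cdot,\cdot),p}(\mathbb{R}^N)} < \infty$) and the translation vector $(\delta e_N, \delta e_N)$ tends to $0$, we obtain $[u-u_{\delta}]_{W^{s(\cdot,\cdot),p}(\mathbb{R}^N)}^p = \|F_u - F_{u_{\delta}}\|_{L^p(\mathbb{R}^{2N})}^p \to 0$. Summing the two pieces yields the claim. The only genuinely delicate point, and the reason hypothesis \eqref{eqn:trin} is indispensable, is the invariance of the weight $|x-y|^{-N-s(x,y)p}$ under the simultaneous translation of both variables; without the relation $s(x-z,y-z)=s(x,y)$ the kernel would change under the change of variables and $F_{u_{\delta}}$ would no longer be a clean translate of $F_u$, breaking the reduction to the scalar $L^p$ continuity result.
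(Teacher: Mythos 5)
Your proposal is correct and follows essentially the same route as the paper: both arguments encode the Gagliardo seminorm as the $L^p(\mathbb{R}^N\times\mathbb{R}^N)$ norm of the function $(x,y)\mapsto (u(x)-u(y))|x-y|^{-N/p-s(x,y)}$, use hypothesis \eqref{eqn:trin} to identify the corresponding function for $u_{\delta}$ as the translate by $(\delta e_N,\delta e_N)$, and conclude by continuity of translations in $L^p$, treating the $L^p(\mathbb{R}^N)$ part of the norm the same way. Your explicit remark on why \eqref{eqn:trin} is indispensable matches the role it plays in the paper's argument.
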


\begin{proof} 
    Clearly $u \in L^{p}(\mathbb{R}^N)$. The continuity of the translations 
    in $L^p(\mathbb{R}^N)$ gives that 
    \[\int_{\mathbb{R}^N} |u(x) - u(x-\delta e_N)|^p dx \to 0\]
    as $\delta \to 0$, that is 
    \begin{equation} \label{eqn:ctlp}
    \|u-u_{\delta}\|_{L^{p}(\mathbb{R}^N)} \to 0 \quad \textrm{as} \quad \delta \to 0.
    \end{equation}

    Since $u \in W^{s(\cdot,\cdot),p}(\mathbb{R}^N)$, it is clear 
    that $v(x,y) = \frac{u(x)-u(y)}{|x-y|^{\frac{N}{p}+s(x,y)}}$ is 
    in $L^p(\mathbb{R}^N \times \mathbb{R}^N)$. Let $w = (e_N,e_N) \in \mathbb{R}^N \times \mathbb{R}^N$, 
    $\tilde{x} = (x, y) \in \mathbb{R}^N \times \mathbb{R}^N$, using \eqref{eqn:trin} and 
    the continuity of the translations in $L^p(\mathbb{R}^N \times \mathbb{R}^N)$ we get
    \[\int_{\mathbb{R}^N \times \mathbb{R}^N} |v(\tilde{x}) - v(\tilde{x}-\delta w)|^p d \tilde{x} \to 0\]
    as $\delta \to 0$, that is 
    \begin{equation} \label{eqn:ctvfs}
    [u-u_{\delta}]_{W^{s(\cdot,\cdot),p}(\mathbb{R}^N)} \to 0 \quad \textrm{as} \quad \delta \to 0,
    \end{equation}
    then \eqref{eqn:ctlp} and \eqref{eqn:ctvfs} conclude the proof.
\end{proof}

Denote by $B_R$ the ball centered at $0$ with radius $R$. Let $\eta \in C_0^{\infty}(\mathbb{R}^N)$ 
be such that $\eta \geq 0$, $\operatorname{supp} \eta \subset B_1$ 
and $\int_{B_1} \eta(x) dx = 1$. Let $\varepsilon > 0$, for $x \in \mathbb{R}^N$,
the mollifier $\eta_{\varepsilon}$ is defined as 
$\eta_{\varepsilon}(x) = \frac{1}{\varepsilon^n} \eta\left(\frac{x}{\varepsilon}\right)$. 
We set $u_{\varepsilon}(x) = (u * \varepsilon)(x)$. The following lemma 
provides an approximation property on the whole space $\mathbb{R}^N$.

\begin{lemma} \label{lem:mollify}
    Let \eqref{eqn:trin} be satisfied and $u \in W^{s(\cdot,\cdot),p}(\mathbb{R}^N)$, 
    then $\|u-u_{\varepsilon}\|_{W^{s(\cdot,\cdot),p}(\mathbb{R}^N)} \to 0$ as $\varepsilon \to 0^+$. 
\end{lemma}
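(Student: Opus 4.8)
The plan is to treat the two parts of the norm separately. The $L^p$ part, namely $\|u-u_\varepsilon\|_{L^p(\mathbb{R}^N)} \to 0$, is the classical convergence of mollifications in $L^p$ and requires no special argument, so the work is concentrated on the Gagliardo seminorm, where I would show that $[u-u_\varepsilon]_{W^{s(\cdot,\cdot),p}(\mathbb{R}^N)} \to 0$.

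The key structural observation is that the mollification is an average of translations,
\[
u_\varepsilon(x) = \int_{B_1} \eta(z)\, u(x-\varepsilon z)\, dz,
\]
so that, writing $v(x,y) = \frac{u(x)-u(y)}{|x-y|^{\frac{N}{p}+s(x,y)}}$ as in Lemma \ref{lem:tra}, the difference quotient of $u-u_\varepsilon$ equals $\int_{B_1} \eta(z)\big(v(x,y)-v(x-\varepsilon z, y-\varepsilon z)\big)\, dz$. Here condition \eqref{eqn:trin} is essential twice: it guarantees both that $s(x-\varepsilon z, y-\varepsilon z) = s(x,y)$, so the translated difference quotient is again $v$ evaluated at a shifted point, and that the seminorm is invariant under the simultaneous translation $(x,y) \mapsto (x-h,y-h)$. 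In particular, the argument of Lemma \ref{lem:tra} applies verbatim to translations in an arbitrary direction $h \in \mathbb{R}^N$, yielding both $[u-u(\cdot-h)]_{W^{s(\cdot,\cdot),p}(\mathbb{R}^N)} \to 0$ as $|h| \to 0$ and the uniform bound $[u-u(\cdot-h)]_{W^{s(\cdot,\cdot),p}(\mathbb{R}^N)} \leq 2[u]_{W^{s(\cdot,\cdot),p}(\mathbb{R}^N)}$, the latter following from the triangle inequality and translation invariance of the seminorm.

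With these ingredients I would apply Minkowski's integral inequality to the $L^p(\mathbb{R}^N \times \mathbb{R}^N)$ norm of the difference quotient, obtaining
\[
[u-u_\varepsilon]_{W^{s(\cdot,\cdot),p}(\mathbb{R}^N)} \leq \int_{B_1} \eta(z)\, [u-u(\cdot-\varepsilon z)]_{W^{s(\cdot,\cdot),p}(\mathbb{R}^N)}\, dz.
\]
The integrand is dominated by $2\eta(z)[u]_{W^{s(\cdot,\cdot),p}(\mathbb{R}^N)}$, which is integrable over $B_1$, and for each fixed $z$ it tends to $0$ as $\varepsilon \to 0^+$ by the generalized translation continuity. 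The dominated convergence theorem then drives the right-hand side to $0$, and combining this with the $L^p$ convergence completes the proof.

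I expect the main obstacle to be the careful bookkeeping required to rewrite the mollified difference quotient as an average of \emph{translated} difference quotients and to justify Minkowski's integral inequality in this setting. The decisive point, which must be stated precisely, is that \eqref{eqn:trin} preserves both the variable order $s(x,y)$ and the value of the seminorm under the simultaneous translation of both variables; without this invariance the averaging argument would not close, since the translated kernel $|x-y|^{-N-s(x-\varepsilon z, y-\varepsilon z)p}$ would not reduce to the original one.
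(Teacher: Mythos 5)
Your proposal is correct and follows essentially the same route as the paper's proof: both write the mollification as an average of translations, use \eqref{eqn:trin} to identify the translated difference quotient with a translate of $v(x,y)=\frac{u(x)-u(y)}{|x-y|^{N/p+s(x,y)}}$ in $L^p(\mathbb{R}^N\times\mathbb{R}^N)$, and conclude via translation continuity plus dominated convergence in $z$. The only difference is that you interchange the seminorm and the $z$-integral by Minkowski's integral inequality, whereas the paper uses H\"older's inequality together with Tonelli--Fubini at the cost of a harmless factor $|B_1|^{p-1}$; both steps are valid.
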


\begin{proof}
    It is sufficient to show that $[u-u_{\varepsilon}]_{W^{s(\cdot,\cdot),p}(\mathbb{R}^N)} \to 0$ 
    as $\varepsilon \to 0^+$. Using the Hölder's inequality in combination with 
    Tonelli's and Fubini's theorems, we obtain 
    \begin{align} \label{eqn:fubini}
        &[u-u_{\varepsilon}]_{W^{s(\cdot,\cdot),p}(\mathbb{R}^N)} \notag \\ 
        &=\int_{\mathbb{R}^{N} \times \mathbb{R}^{N}} \frac{\left|u(x)-u_{\varepsilon}(x)-u(y)+u_{\varepsilon}(y)\right|^{p}}{|x-y|^{N+s(x,y)p}} d x d y \notag \\
        &=\int_{\mathbb{R}^{N} \times \mathbb{R}^{N}} \frac{\left|u(x)-u(y)-\int_{\mathbb{R}^{N}}(u(x-z)-u(y-z)) \eta_{\varepsilon}(z) d z\right|^{p}}{|x-y|^{N+s(x,y)p}} 
         d x d y \notag \\
        &=\int_{\mathbb{R}^{N} \times \mathbb{R}^{N}} \frac{\left|u(x)-u(y)-\frac{1}{\varepsilon^n} \int_{B_{\varepsilon}}(u(x-z)-u(y-z)) \eta\left(\frac{z}{\varepsilon}\right) d z\right|^{p}}{|x-y|^{N+s(x,y)p}} 
         d x d y \notag \\
        &=\int_{\mathbb{R}^{N} \times \mathbb{R}^{N}} \frac{\left|\int_{B_{1}}(u(x)-u(y)-u(x-\varepsilon \tilde{z})+u(y-\varepsilon \tilde{z})) \eta(\tilde{z}) d \tilde{z}\right|^{p}}{|x-y|^{N+s(x,y)p}} 
         d x d y \notag \\
        &\leq\left|B_{1}\right|^{p-1} \int_{\mathbb{R}^{N} \times \mathbb{R}^{N}}\left(\int_{B_{1}}\frac{|u(x)-u(y)-u(x-\varepsilon z)+u(y-\varepsilon z)|^{p}}{|x-y|^{N+s(x,y)p}} \eta^{p}(z) d z\right) d x d y \notag \\
        &=\left|B_{1}\right|^{p-1} \int_{\left(\mathbb{R}^{N} \times \mathbb{R}^{N}\right) \times B_{1}}\frac{|u(x)-u(y)-u(x-\varepsilon z)+u(y-\varepsilon z)|^{p}}{|x-y|^{N+s(x,y)p}} \eta^{p}(z) d x d y d z .
    \end{align}
    
    Recall that the function $v(x,y)$ we define in Lemma \ref{lem:tra} is 
    in $L^p(\mathbb{R}^N \times \mathbb{R}^N)$. Fix $z \in B_1$, 
    let $w=(z,z)$ and $\tilde{x} = (x, y)$, using \eqref{eqn:trin} and 
    the continuity of the translations in $L^p(\mathbb{R}^N \times \mathbb{R}^N)$ we get
    \[\int_{\mathbb{R}^N \times \mathbb{R}^N} |v(\tilde{x}) - v(\tilde{x}-\varepsilon w)|^p d \tilde{x} \to 0\]
    as $\varepsilon \to 0^+$, that is 
    \[\int_{\mathbb{R}^N \times \mathbb{R}^N} \frac{|u(x)-u(y)-u(x-\varepsilon z)+u(y-\varepsilon z)|^{p}}{|x-y|^{N+s(x,y)p}} d x d y \to 0\]
    as $\varepsilon \to 0^+$.

    Moreover, for a.e. $z \in B_1$, by \eqref{eqn:trin} we have 
    \begin{align} \label{eqn:dct}
    & \eta^{p}(z) \int_{\mathbb{R}^N \times \mathbb{R}^N} \frac{|u(x)-u(y)-u(x-\varepsilon z)+u(y-\varepsilon z)|^{p}}{|x-y|^{N+s(x,y)p}} d x d y \notag \\
    & \leq  2^{p-1} \eta^{p}(z) \left(\int_{\mathbb{R}^N \times \mathbb{R}^N} \frac{|u(x)-u(y)|^{p}}{|x-y|^{N+s(x,y)p}} d x d y 
     + \int_{\mathbb{R}^N \times \mathbb{R}^N} \frac{|u(x-\varepsilon z)-u(y-\varepsilon z)|^{p}}{|x-y|^{N+s(x,y)p}} d x d y \right) \notag \\
    &= 2^{p} \eta^{p}(z) \int_{\mathbb{R}^N \times \mathbb{R}^N} \frac{|u(x)-u(y)|^{p}}{|x-y|^{N+s(x,y)p}} d x d y 
    \in L^{\infty}(B_1)
    \end{align} 
    for any $\varepsilon > 0$. Hence, by \eqref{eqn:fubini}, \eqref{eqn:dct} and dominated 
    convergence theorem we have 
    \[\left|B_{1}\right|^{p-1} \int_{\left(\mathbb{R}^{N} \times \mathbb{R}^{N}\right) \times B_{1}}\frac{|u(x)-u(y)-u(x-\varepsilon z)+u(y-\varepsilon z)|^{p}}{|x-y|^{N+s(x,y)p}} \eta^{p}(z) d x d y d z \to 0\]
    as $\varepsilon \to 0^+$. This fact concludes the proof.
\end{proof}

Next, we prove a density result on a bounded Lipschitz hypograph. For the definition
of a Lipschitz hypograph, see \cite[Definition 3.28]{MCLEAN2000}.

\begin{lemma} \label{lem:hyp}
    Let \eqref{eqn:trin} be satisfied, and let $\Omega \subset \mathbb{R}^N$ be 
    a Lipschitz hypograph with bounded boundary. Given $\varepsilon > 0$, 
    let $u \in W^{s(\cdot,\cdot),p}(\Omega)$, 
    then there exists $v \in C^{\infty}(\mathbb{R}^N)$ such 
    that $\|v-u\|_{W^{s(\cdot,\cdot),p}(\Omega)} < \varepsilon$.
\end{lemma}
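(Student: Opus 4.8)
The plan is to approximate $u$ by first translating it downward in the $x_N$ direction and then mollifying. We cannot mollify directly because the translation and mollification results already established (Lemmas \ref{lem:tra} and \ref{lem:mollify}) are stated for functions on all of $\mathbb{R}^N$, whereas $u$ is defined only on $\Omega$; extending $u$ by zero is harmless in $L^p$ but in general destroys finiteness of the Gagliardo seminorm across $\partial\Omega$. Writing the hypograph as $\Omega = \{x : x_N < \zeta(x')\}$ with $\zeta$ Lipschitz of constant $L$, the key geometric observation is that the downward translate $u_\delta(x) = u(x', x_N - \delta)$ is defined on the strictly larger hypograph $\Omega^\delta := \{x_N < \zeta(x') + \delta\} \supset \overline{\Omega}$, and a short computation using the Lipschitz bound gives $\mathrm{dist}(\overline{\Omega}, \mathbb{R}^N \setminus \Omega^\delta) \geq \delta/(1+L) > 0$. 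Thus after translating we gain a collar of positive width above $\overline{\Omega}$ inside which $u_\delta$ is honestly defined, and this collar is exactly the room needed to mollify.

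First I would show $\|u_\delta - u\|_{W^{s(\cdot,\cdot),p}(\Omega)} \to 0$ as $\delta \to 0^+$, adapting the argument of Lemma \ref{lem:tra} to $\Omega$. For $x \in \Omega$ the point $(x', x_N - \delta)$ still lies in $\Omega$, so $u_\delta|_\Omega$ is built only from values of $u$ inside $\Omega$. The $L^p$ part follows from continuity of translations in $L^p(\mathbb{R}^N)$ applied to the zero extension of $u$, since on $\Omega$ the translate samples only in-domain values. For the seminorm, set $V(x,y) = (u(x)-u(y))/|x-y|^{N/p + s(x,y)} \in L^p(\Omega\times\Omega)$; using \eqref{eqn:trin} together with $|x-y| = |(x-\delta e_N)-(y-\delta e_N)|$ one checks that the isometry image of $u_\delta$ is exactly the translate $V(\cdot - \delta w)$ with $w = (e_N, e_N)$, so that $[u_\delta - u]_{W^{s(\cdot,\cdot),p}(\Omega)}^p = \int_{\Omega\times\Omega} |V(\cdot) - V(\cdot - \delta w)|^p$. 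Bounding this by the same integral over $\mathbb{R}^N\times\mathbb{R}^N$ for the zero extension of $V$ and invoking $L^p$-continuity of translations yields the claim.

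Next, for $\delta$ fixed small, I would mollify $u_\delta$ at a scale $\varepsilon' < \delta/(1+L)$. Extending $u_\delta$ by zero outside $\Omega^\delta$ to $\bar u_\delta \in L^p(\mathbb{R}^N)$ and setting $v = \bar u_\delta * \eta_{\varepsilon'}$ gives $v \in C^\infty(\mathbb{R}^N)$; for $x \in \Omega$ every sampled point $x - z$ with $|z| < \varepsilon'$ lies in $\Omega^\delta$ by the collar estimate, so on $\Omega$ the function $v$ coincides with the honest mollification of $u_\delta$. Repeating the computation of Lemma \ref{lem:mollify} restricted to $\Omega$ --- legitimate because all the translates $u_\delta(\cdot - \varepsilon' z)$ appearing there remain defined on $\Omega^\delta$ --- reduces $[v - u_\delta]_{W^{s(\cdot,\cdot),p}(\Omega)}$ to an average over $z \in B_1$ of $L^p$-translation differences of the isometry image of $u_\delta$, which tend to $0$ by dominated convergence, with the same domination as in \eqref{eqn:dct}. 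Hence $\|v - u_\delta\|_{W^{s(\cdot,\cdot),p}(\Omega)} \to 0$ as $\varepsilon' \to 0^+$. Choosing first $\delta$ so that $\|u_\delta - u\| < \varepsilon/2$ and then $\varepsilon' < \delta/(1+L)$ so that $\|v - u_\delta\| < \varepsilon/2$ produces the desired $v \in C^\infty(\mathbb{R}^N)$.

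The main obstacle is that neither convergence can be read off directly from Lemmas \ref{lem:tra} and \ref{lem:mollify}, which are global ($\mathbb{R}^N$) statements, whereas here every estimate must be carried out on the fixed set $\Omega$ for a function that does not extend to $W^{s(\cdot,\cdot),p}(\mathbb{R}^N)$. The device that makes this work is to pass, through the isometry $u \mapsto V$, from the fractional seminorm to an ordinary $L^p$ norm, where extension by zero and continuity of translations are unproblematic; the translation-invariance \eqref{eqn:trin} of $s$ is precisely what guarantees that translating $u$ corresponds to translating $V$, and the Lipschitz collar is what guarantees that the downward translate and its subscale mollification never leave the domain on which $u_\delta$ is defined.
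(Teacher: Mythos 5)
Your proposal is correct, and its skeleton --- translate downward by $\delta$ to gain a collar of width comparable to $\delta$ above $\overline{\Omega}$ inside the enlarged hypograph $\Omega^\delta$, then smooth --- is exactly the paper's. The difference lies in how the smoothing step is executed. The paper multiplies $u_\delta$ by a cut-off $\tau\in C^\infty(\mathbb{R}^N)$ equal to $1$ on $\Omega$ and vanishing outside $\Omega_{\delta/2}$, invokes Lemma \ref{lem:cutoff} to conclude $\tau u_\delta\in W^{s(\cdot,\cdot),p}(\mathbb{R}^N)$, and then applies the global mollification Lemma \ref{lem:mollify} as a black box; you instead skip the cut-off, mollify the zero extension of $u_\delta$ at a scale $\varepsilon'<\delta/(1+L)$ so that on $\Omega$ only honest values of $u_\delta$ are sampled, and re-run the computation of Lemma \ref{lem:mollify} restricted to $\Omega\times\Omega$. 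Both work; the paper's route reuses its lemmas verbatim at the price of the extra cut-off machinery, while yours trades that for a localized repetition of the mollification estimate, made legitimate by the collar bound $\mathrm{dist}(\overline{\Omega},\mathbb{R}^N\setminus\Omega^\delta)\geq\delta/(1+L)$. Your treatment of the translation step is in fact more careful than the paper's: the paper cites Lemma \ref{lem:tra}, which is stated only for $u\in W^{s(\cdot,\cdot),p}(\mathbb{R}^N)$, to get $\|u_\delta-u\|_{W^{s(\cdot,\cdot),p}(\Omega)}<\varepsilon/2$ for $u$ merely in $W^{s(\cdot,\cdot),p}(\Omega)$; your observation that the hypograph is stable under downward translation, so that $[u_\delta-u]_{W^{s(\cdot,\cdot),p}(\Omega)}^p$ is dominated by a genuine $L^p(\mathbb{R}^{2N})$ translation difference of the (zero-extended) isometry image $V$, is precisely the argument needed to make that step rigorous.
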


\begin{proof}
    Since $\Omega \subset \mathbb{R}^N$ is a Lipschitz hypograph, 
    there exists a Lipschitz function $\gamma: \mathbb{R}^{N-1} \to \mathbb{R}^N$ having 
    compact support such that 
    \[\Omega = \{(x',x_N) \in \mathbb{R}^{N-1} \times \mathbb{R}: x_N < \gamma(x')\},\]
    for $\delta>0$, we define 
    \[\Omega_{\delta} = \{x \in \mathbb{R}^N: x_N < \gamma(x') + \delta\},\]
    then $\Omega \subset \Omega_{\delta}$. By \eqref{eqn:trin} we 
    have $u_{\delta} \in W^{s(\cdot,\cdot),p}(\Omega_{\delta})$. Let $\varepsilon > 0$, by 
    Lemma \ref{lem:tra}, we can choose $\delta$ small enough so that 
    \begin{equation} \label{eqn:tra-ori}
    \|u_{\delta}-u\|_{W^{s(\cdot,\cdot),p}(\Omega)} < \frac{\varepsilon}{2}. 
    \end{equation}
    Now choose a cut-off function $\tau \in C^{\infty}(\mathbb{R}^N)$ satisfying 
    \[\tau(x)=\left\{
        \begin{array}{ll}
            1, & \textrm{if~~} x \in \Omega,  \\[1ex]
            0, & \textrm{if~~} x \in  \mathbb{R}^N \backslash \Omega_{\frac{\delta}{2}}.
        \end{array}
    \right.\]
    By Lemma \ref{lem:cutoff}, we know 
    that $\tau u_{\delta} \in W^{s(\cdot,\cdot),p}(\mathbb{R}^N)$. Hence, by 
    Lemma \ref{lem:mollify}, there exists $v \in C^{\infty}(\mathbb{R}^N)$ such that 
    \begin{equation} \label{eqn:mol-cuttra}
        \|v - \tau u_{\delta}\|_{W^{s(\cdot,\cdot),p}(\mathbb{R}^N)} < \frac{\varepsilon}{2}.
    \end{equation}
    Finally, combining \eqref{eqn:tra-ori} 
    with \eqref{eqn:mol-cuttra} we obtain
    \[\begin{aligned}
        \left\|v - u\right\|_{W^{s(\cdot,\cdot),p}(\Omega)} 
    & \leq \left\|v - \tau u_{\delta}\right\|_{W^{s(\cdot,\cdot),p}(\Omega)} 
     + \left\|\tau u_{\delta} - u\right\|_{W^{s(\cdot,\cdot),p}(\Omega)} \\
    & \leq \|v - \tau u_{\delta}\|_{W^{s(\cdot,\cdot),p}(\mathbb{R}^N)} 
     + \|u_{\delta}-u\|_{W^{s(\cdot,\cdot),p}(\Omega)} \\
    & < \frac{\varepsilon}{2} + \frac{\varepsilon}{2} = \varepsilon.
    \end{aligned}\]
    This concludes the proof.
\end{proof}

Finally, combining Lemma \ref{lem:cutoff} - Lemma \ref{lem:hyp}, 
the global approximation by smooth functions on a bounded Lipschitz domain is 
proven by a partition of unity technique.

\begin{theorem} \label{thm:density}
    Let \eqref{eqn:trin} be satisfied, and let $\Omega \subset \mathbb{R}^N$ be 
    an open set of class $C^{0,1}$ with bounded boundary. Then $C^{\infty}(\overline{\Omega})$ is 
    dense in $W^{s(\cdot,\cdot),p}(\Omega)$.
\end{theorem}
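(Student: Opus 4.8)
The plan is to prove the global density statement by localizing $u$ through a finite partition of unity and reducing each localized piece either to the interior mollification result (Lemma \ref{lem:mollify}) or to the hypograph approximation result (Lemma \ref{lem:hyp}). The smoothness of the approximant is inherited because both lemmas produce functions in $C^{\infty}(\mathbb{R}^N)$, whose restrictions lie in $C^{\infty}(\overline{\Omega})$.

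First I would exploit the geometry of $\Omega$. Since $\Omega$ is of class $C^{0,1}$ with bounded boundary, $\partial\Omega$ is compact, and by the definition of a Lipschitz domain each boundary point admits an open neighborhood in which, after a suitable rigid motion, $\Omega$ coincides with a bounded Lipschitz hypograph. Covering the compact set $\partial\Omega$ by finitely many such neighborhoods $U_1,\dots,U_m$, I would fix $\psi_1,\dots,\psi_m$ with $\psi_j\in C_0^{\infty}(U_j)$, $0\le\psi_j\le 1$, summing to $1$ on a neighborhood of $\partial\Omega$, and set $\psi_0 = 1-\sum_{j=1}^m\psi_j$, so that $\psi_0$ is smooth, takes values in $[0,1]$, equals $1$ far from $\partial\Omega$, and has support at positive distance from $\partial\Omega$. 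Given $u\in W^{s(\cdot,\cdot),p}(\Omega)$ and $\varepsilon>0$, I would write $u=\sum_{j=0}^m\psi_j u$; by Lemma \ref{lem:cutoff} each summand lies in $W^{s(\cdot,\cdot),p}(\Omega)$, so it suffices to approximate each $\psi_j u$ to within $\varepsilon/(m+1)$ by a function in $C^{\infty}(\overline{\Omega})$ and then sum.

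For the interior term $\psi_0 u$, whose support stays at positive distance from $\partial\Omega$, the mechanism in the second part of Lemma \ref{lem:cutoff} gives $\psi_0 u\in W^{s(\cdot,\cdot),p}(\mathbb{R}^N)$ after extension by zero, and Lemma \ref{lem:mollify} furnishes a mollification in $C^{\infty}(\mathbb{R}^N)$ arbitrarily close in $W^{s(\cdot,\cdot),p}(\mathbb{R}^N)$, hence in $W^{s(\cdot,\cdot),p}(\Omega)$. For each boundary term $\psi_j u$ with $1\le j\le m$, I would transport the problem through the rigid motion $T_j$ attached to the chart $U_j$: the image $T_j(\Omega)$ agrees inside $T_j(U_j)$ with a bounded Lipschitz hypograph $\Omega_j$, and since $\psi_j u$ vanishes outside $U_j$, its transport $(\psi_j u)\circ T_j^{-1}$ defines an element of $W^{s(\cdot,\cdot),p}(\Omega_j)$. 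Applying Lemma \ref{lem:hyp} on $\Omega_j$ yields $w_j\in C^{\infty}(\mathbb{R}^N)$ approximating it, and pulling back $v_j:=w_j\circ T_j\in C^{\infty}(\mathbb{R}^N)$ approximates $\psi_j u$. Finally $v:=\sum_{j=0}^m v_j\in C^{\infty}(\mathbb{R}^N)$ restricts to $C^{\infty}(\overline{\Omega})$ with $\|u-v\|_{W^{s(\cdot,\cdot),p}(\Omega)}\le\sum_{j=0}^m\|\psi_j u-v_j\|_{W^{s(\cdot,\cdot),p}(\Omega)}<\varepsilon$.

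The main obstacle is the rigid-motion step. Lemma \ref{lem:hyp}, through Lemma \ref{lem:tra}, uses translations in the fixed direction $e_N$, and invariance of the Gagliardo seminorm under those translations rests on \eqref{eqn:trin}; but flattening a generic Lipschitz boundary into a hypograph requires a rotation $R_j$, not merely a translation. To guarantee that $[\cdot]_{W^{s(\cdot,\cdot),p}}$ is preserved under $T_j$, I would use that the admissible orders treated here satisfy $s(x,y)=s(|x-y|)$, so that $s(R_j x,R_j y)=s(|x-y|)=s(x,y)$ and the kernel $|x-y|^{-(N+s(x,y)p)}$ is rotation invariant; this is precisely what makes the change of variables in the double integral leave the seminorm unchanged, so the per-chart estimate transports back faithfully. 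A secondary bookkeeping point is that, because $\psi_j u$ is supported in $U_j$ and $\Omega_j$ coincides with $\Omega$ there, the approximation error measured on $\Omega_j$ controls the error measured on $\Omega$, which is what allows the local estimates to assemble into the stated global bound.
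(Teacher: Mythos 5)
Your proposal is correct and follows essentially the same route as the paper: a partition of unity subordinate to one interior set plus finitely many boundary charts, with the interior piece handled via Lemma \ref{lem:cutoff} and Lemma \ref{lem:mollify} and each boundary piece via Lemma \ref{lem:cutoff} and Lemma \ref{lem:hyp}. You are in fact more careful than the paper on one point: the paper applies Lemma \ref{lem:hyp} to $U_j\cap\Omega_j$ without discussing the rigid motion needed to flatten the chart, whereas you correctly note that invariance of the Gagliardo seminorm under the required rotation rests on $s(x,y)=s(|x-y|)$ (the class the paper restricts to) rather than on the translation invariance \eqref{eqn:trin} alone.
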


\begin{proof}
    Since $\Omega$ is a Lipschitz domain with bounded boundary, there exist finite 
    families $\{U_{j}\}_{j=1}^k$ and $\{\Omega_{j}\}_{j=1}^k$ have the following properties:
    \begin{enumerate}[(i)]
        \item $\displaystyle \partial \Omega \subset \bigcup_{j=1}^k U_{j}$;
        \item Each $\Omega_{j}$ can be transformed to a Lipschitz hypograph by a rotation plus a translation; 
        \item For each $j \in \{1,\cdots,k\}$, $U_j \cap \Omega = U_j \cap \Omega_j$.
    \end{enumerate}

    Define one additional open set $U_0 = \{x \in \Omega: \mathrm{dist}(x, \mathbb{R}^N 
    \backslash \Omega) > \delta\}$, choosing $\delta>0$ small enough so that 
    \[\overline{\Omega} \subset \bigcup_{j=0}^k U_{j}.\]
    If we consider the covering, there exists a partition of unity related to it, i.e. there 
    exists a family $\{\tau_{j}\}_{j=0}^k$ such that $\tau_{j} \in C_0^{\infty}(U_j)$, 
    $0 \leq \tau_{j} \leq 1$ for all $j \in \{0,\cdots,k\}$ and $\sum_{j=0}^k \tau_{j} = 1$. For 
    any $j \in \{1,\cdots,k\}$, by Lemma \ref{lem:cutoff}, we know 
    that $\tau_j u \in W^{s(\cdot,\cdot),p}(U_j \cap \Omega_j)$. Fix $\varepsilon > 0$.
    By Lemma \ref{lem:hyp}, if $1 \leq j \leq k$ then 
    there exists $v_j \in C^{\infty}(\mathbb{R}^N)$ such that 
    \[\left\|v_j - \tau_j u\right\|_{W^{s(\cdot,\cdot),p}(U_j \cap \Omega_j)} < \frac{\varepsilon}{2k}.\]
    Also, by Lemma \ref{lem:cutoff}, we know that $\tau_0 u \in W^{s(\cdot,\cdot),p}(\mathbb{R}^N)$. By 
    Lemma \ref{lem:mollify}, there exists $v_0 \in C^{\infty}(\mathbb{R}^N)$ such that
    \[\left\|v_0 - \tau_0 u\right\|_{W^{s(\cdot,\cdot),p}(\mathbb{R}^N)} < \frac{\varepsilon}{2},\]

    Now, define $v = \sum_{j=0}^k v_j$. Then clearly $v \in C^{\infty}(\overline{\Omega})$. 
    Since $u = \sum_{j=0}^k \tau_{j} u$, we obtain that 
    \[\begin{aligned}
        \left\|v - u\right\|_{W^{s(\cdot,\cdot),p}(\Omega)}
    &= \left\|\sum_{j=0}^k (v  - \tau_j u)\right\|_{W^{s(\cdot,\cdot),p}(\Omega)} \\
    & \leq \left\|v_0 - \tau_0 u\right\|_{W^{s(\cdot,\cdot),p}(U_0)}
    + \sum_{j=1}^k \left\|v_j - \tau_j u\right\|_{W^{s(\cdot,\cdot),p}(U_j \cap \Omega_j)} \\
    & < \frac{\varepsilon}{2} + \frac{\varepsilon}{2}
    = \varepsilon,
    \end{aligned}\]
    The arbitrariness of $\varepsilon$ concludes the proof.
\end{proof}

\section{The variable-order fractional \texorpdfstring{$p$}{Lg}-Laplacian evolution equations} \label{4}
In this section, we will study an approximating problem for \eqref{eqn:main} and 
prove the existence and uniqueness of its weak solution. Let $\Omega \subset \mathbb{R}^N$ be 
an open set of class $C^{0,1}$ with bounded boundary. For $1<p<2$, we consider the 
variable-order fractional $p$-Laplacian evolution equations
\begin{equation}
    \left\{
    \begin{array}{ll}
        \displaystyle \vspace{0.2em} \frac{\partial u_p}{\partial t} = \int_{\Omega} 
        \frac{k(x,y)}{|x-y|^{N+s(x,y)p}} U_p(t,x,y)
         d y, & \textrm{in~~} (0,T) \times \Omega,  \\ 
        u(0,x) = f, & \textrm{on~~} \Omega.
    \end{array}
    \right. \label{eqn:vfpl}
\end{equation}
where $U_p(t,x,y) = \left|u_p(t,y)-u_p(t,x)\right|^{p-2}\left(u_p(t,y)-u_p(t,x)\right)$ and 
the corresponding definition is described as follows:

\begin{definition}
    Given $f \in L^2(\Omega)$, we say that $u_p$ is a weak solution of problem \eqref{eqn:vfpl}  
    in $[0,T]$, if $u_p \in W^{1,1}\left(0,T;L^2(\Omega)\right)$, $u_p(0,\cdot)=f$ and for 
    any $\varphi \in W^{s(\cdot,\cdot),p}(\Omega) \cap L^2(\Omega)$, 
    the following integral equality holds:
    \begin{equation}
        \frac{1}{2} \int_{\Omega} \int_{\Omega} \frac{k(x,y)}{|x-y|^{N+s(x,y)p}}
        U_p(t,x,y)
        (\varphi(y)-\varphi(x)) d x d y 
        =-\int_{\Omega} \frac{\partial u_p}{\partial t} \varphi(x) d x
    \end{equation}
    for almost all $t \in (0,T)$.
\end{definition}

To study the problem \eqref{eqn:vfpl} we consider the energy 
functional $\mathcal{B}_p^{s(\cdot,\cdot)}: L^2(\Omega) \to [0,+\infty]$ given by 
\[\mathcal{B}_p^{s(\cdot,\cdot)}(u_p)=\left\{
    \begin{array}{ll}
        \displaystyle \vspace{0.2em} \frac{1}{2} \int_{\Omega} \int_{\Omega} 
         \frac{k(x,y)\left|u_p(y)-u_p(x)\right|^{p}}{|x-y|^{N+s(x,y)p}} 
        d x d y, & \textrm{if~~} u_p \in L^2(\Omega) \cap W^{s(\cdot,\cdot),p}(\Omega),  \\ 
        + \infty, & \textrm{if~~} u_p \in L^2(\Omega) \backslash W^{s(\cdot,\cdot),p}(\Omega).
    \end{array}
\right.\]
By Fatou's Lemma we have that $\mathcal{B}_p^{s(\cdot,\cdot)}$ is lower semi-continuous  
in $L^2(\Omega)$. Then, since $\mathcal{B}_p^{s(\cdot,\cdot)}$ is convex, we know 
that $\mathcal{B}_p^{s(\cdot,\cdot)}$ is weak lower semi-continuous in $L^2(\Omega)$ and 
the subdifferential $\partial \mathcal{B}_p^{s(\cdot,\cdot)}$ is a maximal monotone operator 
in $L^2(\Omega)$. To characterize $\partial \mathcal{B}_p^{s(\cdot,\cdot)}$ we introduce the 
following operator:

\begin{definition}
    We define the operator $B_p^{s(\cdot,\cdot)}$ in $L^2(\Omega) \times 
    L^2(\Omega)$ by $v_p \in B_p^{s(\cdot,\cdot)}u_p$ if and only 
    if $u_p \in W^{s(\cdot,\cdot),p}(\Omega) \cap L^2(\Omega)$, $v_p \in L^2(\Omega)$ and 
    \begin{equation}  \label{eqn:elliptic-vfpl}
        \frac{1}{2} \int_{\Omega} \int_{\Omega} \frac{k(x,y)}{|x-y|^{N+s(x,y)p}} U_p(x,y)
    (\varphi(y)-\varphi(x)) d x d y =\int_{\Omega} v_p(x) \varphi(x) d x
    \end{equation}
    for all $\varphi \in W^{s(\cdot,\cdot),p}(\Omega) \cap L^2(\Omega)$, 
    where $U_p(x,y) = \left|u_p(y)-u_p(x)\right|^{p-2}\left(u_p(y)-u_p(x)\right)$.
\end{definition}

In the following result, we prove that the operator $B_p^{s(\cdot,\cdot)}$ satisfies adequate 
conditions for the application of the Nonlinear Semigroup Theory. 

\begin{theorem} \label{thm:vfpl}
    The operator $B_p^{s(\cdot,\cdot)}$ is $m$-completely 
    accretive in $L^2(\Omega)$ with dense domain. Moreover, $B_p^{s(\cdot,\cdot)} 
    = \partial \mathcal{B}_p^{s(\cdot,\cdot)}$.
\end{theorem}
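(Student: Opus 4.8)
The plan is to verify the three assertions—complete accretivity, the range condition $R(I+\lambda B_p^{s(\cdot,\cdot)})=L^2(\Omega)$, and the identification with the subdifferential—by exploiting the convexity of $r\mapsto|r|^p$, the monotonicity of $r\mapsto|r|^{p-2}r$, and the direct method of the calculus of variations, in that order. First I would establish complete accretivity. Given $v_i\in B_p^{s(\cdot,\cdot)}u_i$ for $i=1,2$ and $q\in\mathbf{P}_0$, set $\varphi=q(u_1-u_2)$. Since $q$ is Lipschitz with $0\le q'\le 1$ and vanishes near the origin, one checks that $\varphi\in W^{s(\cdot,\cdot),p}(\Omega)\cap L^2(\Omega)$: the bound $|q(a)-q(b)|\le|a-b|$ controls the Gagliardo seminorm of $\varphi$ by that of $u_1-u_2$, while $\{q(u_1-u_2)\neq 0\}\subseteq\{|u_1-u_2|\ge c\}$ has finite measure, giving $\varphi\in L^2(\Omega)$. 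Subtracting the two defining identities \eqref{eqn:elliptic-vfpl} tested against this $\varphi$ yields
\[
\int_{\Omega}(v_1-v_2)\,q(u_1-u_2)\,dx
=\frac{1}{2}\int_{\Omega}\int_{\Omega}\frac{k(x,y)\big(U_{p,1}-U_{p,2}\big)\big(\varphi(y)-\varphi(x)\big)}{|x-y|^{N+s(x,y)p}}\,dx\,dy,
\]
where $U_{p,i}(x,y)=|u_i(y)-u_i(x)|^{p-2}(u_i(y)-u_i(x))$. Writing $a=u_1(y)-u_1(x)$, $b=u_2(y)-u_2(x)$ and $w=u_1-u_2$, so that $a-b=w(y)-w(x)$, the integrand factors as $\big(|a|^{p-2}a-|b|^{p-2}b\big)\big(q(w(y))-q(w(x))\big)$; since $r\mapsto|r|^{p-2}r$ and $q$ are both nondecreasing, the two factors share the sign of $a-b$, so the integrand is nonnegative and complete accretivity follows. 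Taking $q=\mathrm{id}$ as a direct computation gives plain monotonicity in $L^2(\Omega)$.

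Next I would prove the range condition for every $\lambda>0$. Fix $g\in L^2(\Omega)$ and minimize
\[
\Phi(u)=\frac{1}{2}\,\|u-g\|_{L^2(\Omega)}^2+\lambda\,\mathcal{B}_p^{s(\cdot,\cdot)}(u)
\]
over $L^2(\Omega)$. The functional is proper (finite on $C_0^{\infty}(\Omega)\subset W^{s(\cdot,\cdot),p}(\Omega)\cap L^2(\Omega)$, where finiteness of the seminorm follows from the Lipschitz bound and compact support), coercive through the quadratic term, and weakly lower semicontinuous because $\mathcal{B}_p^{s(\cdot,\cdot)}$ is convex and lower semicontinuous in $L^2(\Omega)$. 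A minimizing sequence is therefore bounded in $L^2(\Omega)$, and by Remark \ref{rmk:bounded} also bounded in $W^{s(\cdot,\cdot),p}(\Omega)$, so a weak $L^2(\Omega)$-limit $u$ exists and is a minimizer lying in $W^{s(\cdot,\cdot),p}(\Omega)\cap L^2(\Omega)$. Computing $\frac{d}{dt}\Phi(u+t\varphi)\big|_{t=0}=0$ for arbitrary $\varphi\in W^{s(\cdot,\cdot),p}(\Omega)\cap L^2(\Omega)$ reproduces exactly the weak identity defining $B_p^{s(\cdot,\cdot)}$, whence $v:=(g-u)/\lambda\in B_p^{s(\cdot,\cdot)}u$ and $u+\lambda v=g$; thus $g\in R(I+\lambda B_p^{s(\cdot,\cdot)})$. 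Together with the first step, this makes $B_p^{s(\cdot,\cdot)}$ $m$-completely accretive, and by Minty's theorem also maximal monotone.

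Finally, for the identification and the density of the domain, I would first prove the inclusion $B_p^{s(\cdot,\cdot)}\subseteq\partial\mathcal{B}_p^{s(\cdot,\cdot)}$ using the pointwise convexity of $r\mapsto|r|^p$, which after integration against the kernel and an appeal to \eqref{eqn:elliptic-vfpl} yields the subgradient inequality
\[
\mathcal{B}_p^{s(\cdot,\cdot)}(w)\ge\mathcal{B}_p^{s(\cdot,\cdot)}(u)+\int_{\Omega}v\,(w-u)\,dx
\]
for every $v\in B_p^{s(\cdot,\cdot)}u$ and every $w\in L^2(\Omega)$ (the inequality being trivial when $w\notin W^{s(\cdot,\cdot),p}(\Omega)$). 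Since $B_p^{s(\cdot,\cdot)}$ is maximal monotone and a maximal monotone operator admits no proper monotone extension, this inclusion forces the equality $B_p^{s(\cdot,\cdot)}=\partial\mathcal{B}_p^{s(\cdot,\cdot)}$. Density of the domain then follows from the standard fact $\overline{\mathrm{Dom}(\partial\mathcal{B}_p^{s(\cdot,\cdot)})}=\overline{\mathrm{Dom}(\mathcal{B}_p^{s(\cdot,\cdot)})}$ together with $\mathrm{Dom}(\mathcal{B}_p^{s(\cdot,\cdot)})=W^{s(\cdot,\cdot),p}(\Omega)\cap L^2(\Omega)\supseteq C_0^{\infty}(\Omega)$, which is dense in $L^2(\Omega)$.

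I expect the main obstacle to be the rigorous derivation of the Euler–Lagrange identity when $1<p<2$: the factor $|u(y)-u(x)|^{p-2}$ is singular where $u(y)=u(x)$, so justifying differentiation under the integral sign requires rewriting the nonlinearity as $|u(y)-u(x)|^{p-1}\,\mathrm{sign}(u(y)-u(x))$ and controlling it through Hölder's inequality with exponents $\frac{p}{p-1}$ and $p$ against the kernel, using the finiteness of $[u]_{W^{s(\cdot,\cdot),p}(\Omega)}$ and $[\varphi]_{W^{s(\cdot,\cdot),p}(\Omega)}$; the monotonicity of the difference quotients of the convex integrand then legitimizes the passage to the limit. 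A secondary technical point, handled by the Lipschitz bound on $q$ and the finite measure of its effective support, is confirming the admissibility of the test function $q(u_1-u_2)$ in the accretivity argument.
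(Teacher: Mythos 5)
Your proposal is correct and follows essentially the same route as the paper: complete accretivity via the test function $q(u_1-u_2)$ and monotonicity of $r\mapsto|r|^{p-2}r$, the range condition by minimizing the quadratically perturbed energy with the direct method (using Remark \ref{rmk:bounded} and reflexivity), and the inclusion $B_p^{s(\cdot,\cdot)}\subseteq\partial\mathcal{B}_p^{s(\cdot,\cdot)}$ from the convexity inequality $p|a|^{p-2}a(b-a)\le|b|^p-|a|^p$ upgraded to equality by maximality. The only divergence is the density of the domain, which you obtain from the abstract identity $\overline{\mathrm{Dom}(\partial\mathcal{B}_p^{s(\cdot,\cdot)})}=\overline{\mathrm{Dom}(\mathcal{B}_p^{s(\cdot,\cdot)})}$ rather than the paper's explicit resolvent estimate $\|v_p-u_p^{(n)}\|_{L^2(\Omega)}^2\le C/n$; both are valid, and your attention to the admissibility of the test functions and to the singularity of $|u(y)-u(x)|^{p-2}$ in the Euler--Lagrange derivation is, if anything, more careful than the paper's.
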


\begin{proof}
    Let $v_{p}^{(i)} \in B_p^{s(\cdot,\cdot)}u_{p}^{(i)}$, $i=1,2$ and $q \in \mathbf{P}_0$. Since 
    $u_{p}^{(1)}, u_{p}^{(2)} \in W^{s(\cdot,\cdot),p}(\Omega)$, we 
    have $q\big(u_{p}^{(1)}-u_{p}^{(2)}\big) \in W^{s(\cdot,\cdot),p}(\Omega) \cap L^{\infty}(\Omega)$. 
    Then we can take $q\big(u_{p}^{(1)}-u_{p}^{(2)}\big)$ as test function 
    in \eqref{eqn:elliptic-vfpl} and we get 
    \begin{align*} 
    &\int_{\Omega} \big(v_{p}^{(1)}-v_{p}^{(2)}\big)q\big(u_{p}^{(1)}-u_{p}^{(2)}\big)dx \\
       &= \frac{1}{2} \int_{\Omega} \int_{\Omega} \frac{k(x,y)U_{p}^{(1)}(x,y)}{|x-y|^{N+s(x,y)p}} 
       \big(q\big(u_{p}^{(1)}(y)-u_{p}^{(2)}(y)\big) - q\big(u_{p}^{(1)}(x)-u_{p}^{(2)}(x)\big)\big) d x d y \\
       &\quad - \frac{1}{2} \int_{\Omega} \int_{\Omega} \frac{k(x,y)U_{p}^{(2)}(x,y)}{|x-y|^{N+s(x,y)p}} 
       \big(q\big(u_{p}^{(1)}(y)-u_{p}^{(2)}(y)\big) - q\big(u_{p}^{(1)}(x)-u_{p}^{(2)}(x)\big)\big) d x d y \\
       &= \frac{1}{2} \int_{\Omega} \int_{\Omega} \frac{k(x,y)\big(U_{p}^{(1)}(x,y)-U_{p}^{(2)}(x,y)\big)}{|x-y|^{N+s(x,y)p}} 
       \big(q\big(u_{p}^{(1)}(y)-u_{p}^{(2)}(y)\big) - q\big(u_{p}^{(1)}(x)-u_{p}^{(2)}(x)\big)\big) d x d y.
    \end{align*}
    The right side of the above formula is nonnegative since $t \mapsto t|t|^{p-2}$ is monotonically 
    increasing. Hence, 
    \[\int_{\Omega} \big(v_{p}^{(1)}-v_{p}^{(2)}\big)q\big(u_{p}^{(1)}-u_{p}^{(2)}\big)dx \geq 0,\]
    from which it follows that $B_p^{s(\cdot,\cdot)}$ is completely accretive.

    Let us show that the operator $B_p^{s(\cdot,\cdot)}$ satisfies the range 
    condition $L^2(\Omega) \subset R \big( I + B_p^{s(\cdot,\cdot)} \big)$. 
    Given $f \in L^2(\Omega)$, we consider the functional 
    \[J(u) = \mathcal{B}_p^{s(\cdot,\cdot)}(u) + \frac{1}{2} \int_{\Omega} u^2 dx - \int_{\Omega} fu dx. \]
    Now we show that $J(u)$ admits a unique minimizer in $L^2(\Omega)$. By Cauchy's inequality 
    with $\varepsilon$, we have, for any $u \in L^{2}(\Omega)$,
    \[J(u) 
    \geq \frac{1}{2} \int_{\Omega} u^{2} dx-\int_{\Omega} f u dx 
    \geq \left(\dfrac{1}{2}-\frac{\varepsilon}{2}\right)\int_{\Omega} u^{2} dx 
    -\dfrac{1}{2\varepsilon} \int_{\Omega} f^{2} dx,\] 
    as long as we take $0 < \varepsilon < 1$, we obtain that $J(u)$ is bounded from below and 
    hence $\underset{u \in L^{2}(\Omega)}{\mathrm{inf}}~ J(u)$ is a finite number. The 
    definition of infimum then implies that there 
    exists a minimizing sequence $u_k \in W^{s(\cdot,\cdot), p}(\Omega) \cap L^{2}(\Omega)$. 
    Existence of the limit $\displaystyle \lim_{k \to \infty} J(u_k)$ implies the boundedness of $J(u_k)$, i.e. 
    for some constant $M$, 
    \[\mathcal{B}_p^{s(\cdot,\cdot)}(u_k) + \frac{1}{2} \int_{\Omega} u_k^2 dx - \int_{\Omega} f u_k dx \leq M, \quad k = 1,2, \cdots.\]
    Using Cauchy's inequality with $\varepsilon$ again, we obtain 
    \begin{equation} \label{eqn:functionalbound}
        \mathcal{B}_p^{s(\cdot,\cdot)}(u_k)+\left(\frac{1}{2} - \frac{\varepsilon}{2}\right) 
        \int_{\Omega} u_k^{2} dx \leq M + \frac{1}{2\varepsilon}\int_{\Omega} f^2 dx,
    \end{equation}
    By Lemma \ref{rmk:bounded}, we know that $\{u_k\}$ is bounded in $W^{s(\cdot,\cdot), p}(\Omega)$. Since 
    $1<p<2$, by the reflexivity of $W^{s(\cdot,\cdot), p}(\Omega)$, we can assume, taking a subsequence if necessary, 
    that $u_k \rightharpoonup u_p$ weakly in $W^{s(\cdot,\cdot), p}(\Omega)$. Moreover, 
    by \eqref{eqn:functionalbound}, we have $\{u_k\}$ is bounded in $L^2(\Omega)$, and 
    consequently $u_p \in L^2(\Omega)$. Thus,
    \[\inf _{u \in L^{2}(\Omega)} J(u) \leq J(u_p) 
    \leq \underset{k \to \infty}{\underline{\text{lim}}} J\left(u_{k}\right) 
    \leq \lim _{k \rightarrow \infty} J\left(u_{k}\right)=\inf _{u \in L^{2}(\Omega)} J(u),\]
    from which we deduce that $u_p$ is a minimizer of the functional $J(u)$. The uniqueness 
    follows by the strict convexity of $J(u)$. Now, we derive the Euler-Lagrange equation satisfied 
    by $u_p$. Fix a function $\varphi \in W^{s(\cdot,\cdot), p}(\Omega) \cap L^{2}(\Omega)$, then 
    the function
    \[F(t) \coloneqq \mathcal{B}_p^{s(\cdot,\cdot)}(u_p+t\varphi) + \frac{1}{2} \int_{\Omega} (u_p+t\varphi)^2 dx 
    - \int_{\Omega} f(u_p+t\varphi) dx\]
    has a minimum at $t=0$, and consequently
    \[\frac{1}{2} \int_{\Omega} \int_{\Omega} \frac{k(x,y)U_p(x,y)}{|x-y|^{N+s(x,y)p}} 
    (\varphi(y)-\varphi(x)) d x d y -\int_{\Omega} (f(x)-u_p(x)) \varphi(x) d x
    = F'(0) = 0.\]
    Then, we have $f-u_p \in B_p^{s(\cdot,\cdot)}u_p$ and thus $B_p^{s(\cdot,\cdot)}$ is $m$-completely 
    accretive in $L^2(\Omega)$.

    Let us now show that $\mathrm{Dom}\big(B_p^{s(\cdot,\cdot)}\big)$ is dense in $L^2(\Omega)$. To 
    see the fact it is enough to show that 
    \[W^{s(\cdot,\cdot), p}(\Omega) \cap L^{2}(\Omega) \subset 
    \overline{\mathrm{Dom}\big(B_p^{s(\cdot,\cdot)}\big)} ^ {L^{2}(\Omega)}.\] 
    So, let us take $v_p \in W^{s(\cdot,\cdot), p}(\Omega) \cap L^{2}(\Omega)$. Since $B_p^{s(\cdot,\cdot)}$ is $m$-completely 
    accretive in $L^2(\Omega)$, there exists $u_{p}^{(n)} \in \mathrm{Dom}\big(B_p^{s(\cdot,\cdot)}\big)$ 
    such that $n(v_p-u_{p}^{(n)}) \in B_p^{s(\cdot,\cdot)} u_{p}^{(n)}$, i.e.
    \[\frac{1}{2} \int_{\Omega} \int_{\Omega} \frac{k(x,y)}{|x-y|^{N+s(x,y)p}} U_{p}^{(n)}(x,y)
    (\varphi(y)-\varphi(x)) d x d y =n \int_{\Omega} (v_p(x)-u_{p}^{(n)}(x)) \varphi(x) d x\]
    for all $\varphi \in W^{s(\cdot,\cdot), p}(\Omega) \cap L^{2}(\Omega)$. Then, 
    taking $\varphi = v_p-u_{p}^{(n)}$, and applying Young's inequality, we obtain that 
    \begin{align*} 
        \int_{\Omega} (v_p(x)-u_{p}^{(n)}(x))^2 d x 
       =\,& \frac{1}{2n} \int_{\Omega} \int_{\Omega} \frac{k(x,y)}{|x-y|^{N+s(x,y)p}} U_{p}^{(n)}(x,y)
       (v_p(y)-v_p(x)) d x d y \\
       &- \frac{1}{2n} \int_{\Omega} \int_{\Omega} \frac{k(x,y)}{|x-y|^{N+s(x,y)p}} 
       |u_{p}^{(n)}(y)-u_{p}^{(n)}(x)|^p d x d y \\
       \leq \, & \frac{1}{2np} \int_{\Omega} \int_{\Omega} \frac{k(x,y)}{|x-y|^{N+s(x,y)p}} 
       |v_p(y)-v_p(x)|^p d x d y \\
       &-\frac{1}{2np} \int_{\Omega} \int_{\Omega} \frac{k(x,y)}{|x-y|^{N+s(x,y)p}} 
       |u_{p}^{(n)}(y)-u_{p}^{(n)}(x)|^p d x d y\\
       \leq \, & \frac{C_2}{2np} [v_p]_{W^{s(\cdot,\cdot), p}(\Omega)}^p,
    \end{align*}
    from which it follows that $u_{p}^{(n)} \to v_p$ in $L^2(\Omega)$.

    Finally, let us show that $B_p^{s(\cdot,\cdot)} 
    = \partial \mathcal{B}_p^{s(\cdot,\cdot)}$. Let $v_p \in B_p^{s(\cdot,\cdot)} u_p$, we have 
    \[\frac{1}{2} \int_{\Omega} \int_{\Omega} \frac{k(x,y)}{|x-y|^{N+s(x,y)p}} U_p(x,y)
    (\varphi(y)-\varphi(x)) d x d y =\int_{\Omega} v_p(x) \varphi(x) d x\]
    for all $\varphi \in W^{s(\cdot,\cdot), p}(\Omega) \cap L^{2}(\Omega)$. Then, given 
    $w_p \in W^{s(\cdot,\cdot), p}(\Omega) \cap L^{2}(\Omega)$, taking $\varphi = w_p-u_p$ and 
    using the numerical inequality $p|a|^{p-2}a(b-a) \leq |b|^p - |a|^p$ we obtain
    \begin{align*} 
       & \int_{\Omega} v_p(x) (w_p(x)-u_p(x)) d x \\
       &= \frac{1}{2} \int_{\Omega} \int_{\Omega} \frac{k(x,y)}{|x-y|^{N+s(x,y)p}} 
       U_p(x,y)(w_p(y)-w_p(x)-(u_p(y)-u_p(x))) d x d y \\
       &\leq  \frac{1}{2} \int_{\Omega} \int_{\Omega} \frac{k(x,y)}{|x-y|^{N+s(x,y)p}}
       |w_p(y)-w_p(x)|^p d x d y 
       - \frac{1}{2} \int_{\Omega} \int_{\Omega} \frac{k(x,y)}{|x-y|^{N+s(x,y)p}}
       |u_p(y)-u_p(x)|^p d x d y \\
       &=\mathcal{B}_p^{s(\cdot,\cdot)}(w_p) - \mathcal{B}_p^{s(\cdot,\cdot)}(u_p).
    \end{align*} 
    Therefore, $v_p \in \partial \mathcal{B}_p^{s(\cdot,\cdot)}(u_p)$, and 
    consequently $B_p^{s(\cdot,\cdot)} \subset \partial \mathcal{B}_p^{s(\cdot,\cdot)}$. Then, 
    since $B_p^{s(\cdot,\cdot)}$ is $m$-completely 
    accretive in $L^2(\Omega)$, we get $B_p^{s(\cdot,\cdot)} 
    = \partial \mathcal{B}_p^{s(\cdot,\cdot)}$.  
\end{proof}

The existence and uniqueness of weak solutions to problem \eqref{eqn:vfpl} is addressed 
in the following result.

\begin{theorem}
    For every $f \in L^2(\Omega)$ there exists a unique 
    weak solution of problem \eqref{eqn:vfpl} in $[0,T]$ for any $T>0$.
\end{theorem}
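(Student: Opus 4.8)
The plan is to obtain this statement directly from the abstract Nonlinear Semigroup Theory, taking the operator $B_p^{s(\cdot,\cdot)}$ studied in Theorem \ref{thm:vfpl} as the generator of the evolution. By Theorem \ref{thm:vfpl}, $B_p^{s(\cdot,\cdot)}$ is $m$-completely accretive in $L^2(\Omega)$, has dense domain, and coincides with the subdifferential $\partial \mathcal{B}_p^{s(\cdot,\cdot)}$ of the convex, lower semi-continuous functional $\mathcal{B}_p^{s(\cdot,\cdot)}$. Since the domain is dense, $\overline{\mathrm{Dom}(B_p^{s(\cdot,\cdot)})}^{L^2(\Omega)} = L^2(\Omega)$, so every $f \in L^2(\Omega)$ is an admissible initial datum. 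First I would apply Theorem \ref{thm:mild-solution} with $A = B_p^{s(\cdot,\cdot)}$ to produce, for each such $f$, a unique mild solution $u_p$ of the abstract Cauchy problem $\frac{du_p}{dt} + B_p^{s(\cdot,\cdot)} u_p \ni 0$, $u_p(0) = f$; and because $B_p^{s(\cdot,\cdot)}$ is a subdifferential, the second assertion of Theorem \ref{thm:mild-solution} upgrades this mild solution to a strong one.

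It then remains to verify that this abstract strong solution is exactly a weak solution of \eqref{eqn:vfpl} in the sense of the corresponding definition. The strong solution satisfies $u_p(t) \in W^{s(\cdot,\cdot),p}(\Omega) \cap L^2(\Omega)$ and the inclusion $-\frac{\partial u_p}{\partial t}(t) \in B_p^{s(\cdot,\cdot)} u_p(t)$ for almost every $t \in (0,T)$. Unwinding the definition of $B_p^{s(\cdot,\cdot)}$ with $v_p = -\frac{\partial u_p}{\partial t}(t)$ turns this inclusion, word for word, into the required integral identity tested against every $\varphi \in W^{s(\cdot,\cdot),p}(\Omega) \cap L^2(\Omega)$, while the initial condition $u_p(0,\cdot)=f$ is built into the solution produced by the semigroup. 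Thus the only non-automatic item in the definition is the time-regularity $u_p \in W^{1,1}(0,T;L^2(\Omega))$.

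Securing that time-regularity is what I expect to be the main obstacle. The gradient-flow theory behind Theorem \ref{thm:mild-solution} yields $u_p \in C([0,T];L^2(\Omega))$ together with local absolute continuity and square-integrability of $\frac{\partial u_p}{\partial t}$ away from $t=0$, but near $t=0$ one must control $\|\frac{\partial u_p}{\partial t}(t)\|_{L^2(\Omega)}$ quantitatively. Here I would exploit the $p$-homogeneity of $\mathcal{B}_p^{s(\cdot,\cdot)}$ (with $1<p<2$) and the attendant smoothing estimates for gradient flows of homogeneous convex functionals, following the argument of \cite{MAZON2016810} and the complete-accretivity framework of \cite{BENILAN199141}, to obtain the integrability of $\frac{\partial u_p}{\partial t}$ on $(0,T)$. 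Finally, uniqueness requires no extra work: complete accretivity makes the solution map an $L^2$-contraction, so any two weak solutions sharing the datum $f$ must coincide.
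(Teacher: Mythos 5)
Your proposal is correct and follows essentially the same route as the paper: invoke Theorem \ref{thm:vfpl} to verify that $B_p^{s(\cdot,\cdot)}$ is an $m$-completely accretive subdifferential with dense domain, apply Theorem \ref{thm:mild-solution} to get a unique strong solution of the abstract Cauchy problem, and identify that strong solution with the weak solution of \eqref{eqn:vfpl}. The paper's proof is in fact terser than yours---it simply asserts that the two solution concepts coincide---so your additional attention to the time-regularity $u_p \in W^{1,1}(0,T;L^2(\Omega))$ via the smoothing estimates for gradient flows of homogeneous functionals is a welcome elaboration rather than a deviation.
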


\begin{proof}
    By Theorem \ref{thm:mild-solution} and Theorem \ref{thm:vfpl} we have that, for every $f \in L^2(\Omega)$, there exists a 
    unique strong solution of the abstract Cauchy problem 
    \begin{equation}
        \left\{
        \begin{array}{ll}
            \displaystyle \vspace{0.25em} \frac{d u_p}{d t} + B_p^{s(\cdot,\cdot)}u_p \ni 0, & t \in (0,T), \\ 
            u(0) = f.
        \end{array}
        \right. \label{eqn:abc-vfpl}
    \end{equation}
    Now, the concept of weak solution of problem \eqref{eqn:vfpl} coincides with the 
    concept of strong solution of \eqref{eqn:abc-vfpl}, and the proof of the existence 
    and uniqueness concludes.
\end{proof}
    
\section{Proof of the main result} \label{5}

In this section, we will present the proof of our main result, namely the existence and 
uniqueness of weak solutions of problem \eqref{eqn:main}, following the approximation method 
in \cite{MAZON2016810}. To define the expression $\frac{u(y)-u(x)}{|u(y)-u(x)|}$, a multivalued 
sign $\mathrm{sign} (\cdot)$ will be used. The multivalued sign is defined 
by $\mathrm{sign} (r) = \frac{r}{|r|}$ if $r \ne 0$, and $[-1,1]$ if $r = 0$.

To study the problem \eqref{eqn:main} we consider the energy 
functional $\mathcal{B}_1^{s(\cdot,\cdot)}: L^2(\Omega) \to [0,+\infty]$ given by 
\[\mathcal{B}_1^{s(\cdot,\cdot)}(u)=\left\{
    \begin{array}{ll}
        \displaystyle \vspace{0.2em} \frac{1}{2} \int_{\Omega} \int_{\Omega} 
        \frac{k(x,y)\left|u(y)-u(x)\right|}{|x-y|^{N+s(x,y)}} 
        d x d y, & \textrm{if~~} u \in L^2(\Omega) \cap W^{s(\cdot,\cdot),1}(\Omega),  \\ 
        + \infty, & \textrm{if~~} u \in L^2(\Omega) \backslash W^{s(\cdot,\cdot),1}(\Omega).
    \end{array}
\right.\]

By Fatou's Lemma we have that $\mathcal{B}_1^{s(\cdot,\cdot)}$ is lower semi-continuous  
in $L^2(\Omega)$. Then, since $\mathcal{B}_1^{s(\cdot,\cdot)}$ is convex, we know 
that the subdifferential $\partial \mathcal{B}_1^{s(\cdot,\cdot)}$ is a maximal monotone operator 
in $L^2(\Omega)$. To characterize $\partial \mathcal{B}_1^{s(\cdot,\cdot)}$ we introduce the 
following operator:

\begin{definition}
    We define the operator $B_1^{s(\cdot,\cdot)}$ in $L^2(\Omega) \times 
    L^2(\Omega)$ by $v \in B_1^{s(\cdot,\cdot)}u$ if and only 
    if $u \in W^{s(\cdot,\cdot),1}(\Omega) \cap L^2(\Omega)$, $v \in L^2(\Omega)$ and 
    there exists a function $\eta \in L^{\infty}(\Omega \times \Omega)$, 
    $\eta(x,y)=-\eta(y,x)$ for almost all $(x,y) \in \Omega \times \Omega$, 
    $\|\eta\|_{L^{\infty}(\Omega \times \Omega)} \leq 1$ and 
    \[k(x,y)\eta(x,y) \in k(x,y) \mathrm{sign} (u(y)-u(x))\]
    for a.e. $(x,y) \in \Omega \times \Omega$, such that 
    \[\frac{1}{2} \int_{\Omega} \int_{\Omega} \frac{k(x,y)}{|x-y|^{N+s(x,y)}}
    \eta(x,y) (\varphi(y)-\varphi(x)) d x d y 
    =\int_{\Omega} v(x) \varphi(x) d x\]
    for all $\varphi \in W^{s(\cdot,\cdot),1}(\Omega) \cap L^2(\Omega)$.
\end{definition}

\begin{theorem} \label{thm:vf1l-ca}
    The operator $B_1^{s(\cdot,\cdot)}$ is completely accretive in $L^2(\Omega)$.
\end{theorem}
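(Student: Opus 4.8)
The plan is to show that the operator $B_1^{s(\cdot,\cdot)}$ is completely accretive, i.e. given $v_i \in B_1^{s(\cdot,\cdot)} u_i$ for $i=1,2$ and any $q \in \mathbf{P}_0$, we must verify that
\[
\int_{\Omega} (v_1-v_2)\, q(u_1-u_2)\, dx \geq 0.
\]
The strategy mirrors the computation for $B_p^{s(\cdot,\cdot)}$ in the proof of Theorem~\ref{thm:vfpl}, but with the single-valued monotone nonlinearity $t \mapsto t|t|^{p-2}$ replaced by the multivalued sign. First I would record that since $u_1, u_2 \in W^{s(\cdot,\cdot),1}(\Omega)$ and $q$ is smooth with bounded derivative supported away from a neighborhood of $0$, the composition $q(u_1-u_2)$ lies in $W^{s(\cdot,\cdot),1}(\Omega) \cap L^{\infty}(\Omega)$, so it is an admissible test function $\varphi$ in the defining integral identity for each $B_1^{s(\cdot,\cdot)} u_i$. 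This test-function admissibility is the analogue of the step used for $B_p^{s(\cdot,\cdot)}$; I would justify it by noting that $q$ is Lipschitz, hence $|q(a)-q(b)| \leq \|q'\|_{\infty}|a-b|$, which controls the Gagliardo seminorm of $q(u_1-u_2)$ by $[u_1-u_2]_{W^{s(\cdot,\cdot),1}(\Omega)}$.

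Next I would subtract the two integral identities, using the common test function $\varphi = q(u_1-u_2)$, to obtain
\[
\int_{\Omega} (v_1-v_2)\, q(u_1-u_2)\, dx
= \frac{1}{2} \int_{\Omega}\int_{\Omega} \frac{k(x,y)}{|x-y|^{N+s(x,y)}}\,
\big(\eta_1(x,y)-\eta_2(x,y)\big)\,\Delta_q\, dx\, dy,
\]
where $\eta_i$ is the antisymmetric density associated with $u_i$ and $\Delta_q := q\big((u_1-u_2)(y)\big) - q\big((u_1-u_2)(x)\big)$. The crux is then a pointwise nonnegativity argument on the integrand. Writing $a_i = u_i(y)-u_i(x)$, the sign condition gives $\eta_i(x,y) \in \mathrm{sign}(a_i)$ almost everywhere, while $\Delta_q = q(a_1-a_2)$ (since $q$ applied to $u_1-u_2$ evaluated at $y$ minus at $x$ equals $q$ of the difference argument; more precisely I track $\Delta_q$ as the increment of the monotone Lipschitz function $q$ along the line). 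Because $q$ is nondecreasing ($0 \leq q' \leq 1$), the increment $\Delta_q$ has the same sign as $a_1 - a_2$, and one checks that $(\eta_1 - \eta_2)\,\Delta_q \geq 0$ pointwise by a short case analysis on the signs of $a_1$ and $a_2$ relative to $a_1 - a_2$.

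The main obstacle will be making this pointwise sign analysis rigorous in the multivalued setting, since $\eta_i$ is only a measurable selection from $\mathrm{sign}(a_i)$ rather than a single-valued monotone function; I would handle it by verifying the inequality $(\zeta_1 - \zeta_2)(q(a_1) - q(a_2)) \geq 0$ for any $\zeta_i \in \mathrm{sign}(a_i)$ and any nondecreasing $q$, exploiting that $\mathrm{sign}$ is itself a monotone graph and that composing a monotone single-valued function with it preserves the inequality. Care is needed on the set where $a_1 = a_2$ (there $\Delta_q = 0$, so the integrand vanishes) and where $a_i = 0$ (there $\eta_i$ ranges over $[-1,1]$, but the factor $q$ of the relevant increment still cooperates). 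Once this pointwise estimate is established and $k \geq 0$ together with the positivity of the kernel $|x-y|^{-N-s(x,y)}$ is invoked, the double integral is nonnegative, which yields $\int_{\Omega}(v_1-v_2)q(u_1-u_2)\,dx \geq 0$ and hence the complete accretivity of $B_1^{s(\cdot,\cdot)}$.
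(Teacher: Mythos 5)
Your proposal is correct and follows essentially the same route as the paper: test with $q(u_1-u_2)$, subtract the two integral identities, and conclude nonnegativity of the resulting double integral from the monotonicity of the multivalued sign graph together with the monotonicity of $q$ (the paper organizes this by splitting the integral over the regions where $u_i(y)=u_i(x)$ or $u_i(y)\neq u_i(x)$, while you argue pointwise, which amounts to the same case analysis). One bookkeeping slip worth fixing: $\Delta_q = q\big((u_1-u_2)(y)\big)-q\big((u_1-u_2)(x)\big)$ equals neither $q(a_1-a_2)$ nor $q(a_1)-q(a_2)$, but the property you actually invoke --- that $\Delta_q$ has the same weak sign as $a_1-a_2=(u_1-u_2)(y)-(u_1-u_2)(x)$ because $q$ is nondecreasing, while $\eta_1-\eta_2$ has the same weak sign as $a_1-a_2$ because $\mathrm{sign}$ is a monotone graph --- is the correct one, so the argument stands.
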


\begin{proof}
    Let $v_{i} \in B_1^{s(\cdot,\cdot)}u_{i}$, $i=1,2$, there exists 
    $\eta_i \in L^{\infty}(\Omega \times \Omega)$, 
    $\eta_i(x,y)=-\eta_i(y,x)$ for almost all $(x,y) \in \Omega \times \Omega$, 
    $\|\eta_i\|_{L^{\infty}(\Omega \times \Omega)} \leq 1$ and 
    \[k(x,y)\eta_i(x,y) \in k(x,y) \mathrm{sign} (u_i(y)-u_i(x))\]
    for a.e. $(x,y) \in \Omega \times \Omega$, such that 
    \[\frac{1}{2} \int_{\Omega} \int_{\Omega} \frac{k(x,y)}{|x-y|^{N+s(x,y)}}
    \eta_i(x,y) (\varphi(y)-\varphi(x)) d x d y 
    =\int_{\Omega} v_i(x) \varphi(x) d x\]
    for all $\varphi \in W^{s(\cdot,\cdot),1}(\Omega) \cap L^2(\Omega)$. 
    Given $q \in \mathbf{P}_0$ , taking $q(u_1(x)-u_2(x))$ as test function, we get
    \begin{align*} 
        &\int_{\Omega}\left(v_{1}(x)-v_{2}(x)\right) q\left(u_{1}(x)-u_{2}(x)\right) d x  
        \\  &=  \frac{1}{2} \int_{\Omega} \int_{\Omega} \frac{k(x,y)(\eta_{1}(x,y)-\eta_{2}(x,y))}{|x-y|^{N+s(x,y)}}  
        \left(q\left(u_{1}(y)-u_{2}(y)\right)-q\left(u_{1}(x)-u_{2}(x)\right)\right) d x d y   
        \\ &= \frac{1}{2} \iint\limits_{\{(x,y):u_1(y) \ne u_1(x), u_2(y)=u_2(x)\}} 
        \frac{k(x,y)}{|x-y|^{N+s(x,y)}} (\eta_{1}(x,y)-\eta_{2}(x,y)) \\
        & \quad \times \left(q\left(u_{1}(y)-u_{2}(y)\right)-q\left(u_{1}(x)-u_{2}(x)\right)\right) d x d y 
        \\ & \quad + \frac{1}{2} \iint\limits_{\{(x,y):u_1(y) = u_1(x), u_2(y) \ne u_2(x)\}} 
        \frac{k(x,y)}{|x-y|^{N+s(x,y)}} (\eta_{1}(x,y)-\eta_{2}(x,y)) \\
        & \quad \times \left(q\left(u_{1}(y)-u_{2}(y)\right)-q\left(u_{1}(x)-u_{2}(x)\right)\right) d x d y 
        \\ & \quad + \frac{1}{2} \iint\limits_{\{(x,y):u_1(y) \ne u_1(x), u_2(y) \ne u_2(x)\}} 
        \frac{k(x,y)}{|x-y|^{N+s(x,y)}} (\eta_{1}(x,y)-\eta_{2}(x,y)) \\
        & \quad \times \left(q\left(u_{1}(y)-u_{2}(y)\right)-q\left(u_{1}(x)-u_{2}(x)\right)\right) d x d y.   
    \end{align*} 
    The last three integrals are nonnegative since both $q$ and the multivalued sign are  
    monotonically non-decreasing. Hence, 
    \[\int_{\Omega} \left(v_{1}(x)-v_{2}(x)\right) q\left(u_{1}(x)-u_{2}(x)\right) dx \geq 0,\]
    from which it follows that $B_1^{s(\cdot,\cdot)}$ is completely accretive.
\end{proof}

\begin{theorem} \label{thm:vf1l}
    Let \eqref{eqn:trin} be satisfied, then the operator $B_1^{s(\cdot,\cdot)}$ is $m$-completely 
    accretive in $L^2(\Omega)$ with dense domain. Moreover, $B_1^{s(\cdot,\cdot)} 
    = \partial \mathcal{B}_1^{s(\cdot,\cdot)}$.
\end{theorem}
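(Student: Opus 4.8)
The plan is to follow the $p\to 1^+$ approximation of \cite{MAZON2016810}. Complete accretivity of $B_1^{s(\cdot,\cdot)}$ is already in hand from Theorem \ref{thm:vf1l-ca}, so three things remain: the range condition $L^2(\Omega)\subset R\big(I+B_1^{s(\cdot,\cdot)}\big)$ (which upgrades complete accretivity to $m$-complete accretivity), density of the domain, and the identification $B_1^{s(\cdot,\cdot)}=\partial\mathcal{B}_1^{s(\cdot,\cdot)}$. I would treat these in this order, since the range condition produces the solutions used everywhere afterward.

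For the range condition, fix $f\in L^2(\Omega)$. For each $1<p<2$, Theorem \ref{thm:vfpl} yields $u_p$ with $f-u_p\in B_p^{s(\cdot,\cdot)}u_p$. Testing this identity with $\varphi=u_p$ gives $\mathcal{B}_p^{s(\cdot,\cdot)}(u_p)+\tfrac12\|u_p\|_{L^2(\Omega)}^2\le \tfrac12\|f\|_{L^2(\Omega)}^2$, hence the $p$-uniform bounds $\|u_p\|_{L^2(\Omega)}\le\|f\|_{L^2(\Omega)}$ and
\[\int_\Omega\int_\Omega \frac{k(x,y)\,|u_p(y)-u_p(x)|^p}{|x-y|^{N+s(x,y)p}}\,dx\,dy \le \|f\|_{L^2(\Omega)}^2.\]
Applying Hölder's inequality with a slightly reduced exponent $s_1<s^-$ and splitting the integral near and far from the diagonal (as in Lemma \ref{lem:bounded}) converts the second bound into a bound on $u_p$ that is uniform in $p$ in $W^{s_1,1}(\Omega)$; the compact embedding $W^{s_1,1}(\Omega)\hookrightarrow\hookrightarrow L^1(\Omega)$ together with the $L^2$ bound then lets me extract a subsequence with $u_p\to u$ strongly in $L^q(\Omega)$ for every $q<2$ and almost everywhere. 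Fatou's lemma, using $|x-y|^{N+s(x,y)p}\to|x-y|^{N+s(x,y)}$ and $|u_p(y)-u_p(x)|^p\to|u(y)-u(x)|$ a.e., then places $u$ in $W^{s(\cdot,\cdot),1}(\Omega)\cap L^2(\Omega)$.

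The hard part is the nonlinear flux. Writing $g_p(x,y)=|u_p(y)-u_p(x)|^{p-2}(u_p(y)-u_p(x))$, the identity $(p-1)p'=p$ shows that the $g_p$ are bounded in $L^{p'}$ for the weight $\tfrac{k}{|x-y|^{N+s(x,y)p}}$, uniformly in $p$; since $p'\to\infty$ as $p\to 1^+$, a subsequence converges weakly-$\ast$ to an antisymmetric $\eta$ with $\|\eta\|_{L^\infty(\Omega\times\Omega)}\le 1$. The delicate point is to show $k\eta\in k\,\mathrm{sign}(u(y)-u(x))$: on $\{u(y)\neq u(x)\}$ the a.e. convergence forces $g_p\to\frac{u(y)-u(x)}{|u(y)-u(x)|}$ pointwise, while on $\{u(y)=u(x)\}$ the bound $|\eta|\le 1$ is compatible with $\mathrm{sign}(0)=[-1,1]$; a monotonicity (Minty-type) argument pins the limit down. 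I would then pass to the limit in the weak formulation first for $\varphi\in C^\infty(\overline\Omega)$ --- for which $\tfrac{k(\varphi(y)-\varphi(x))}{|x-y|^{N+s(x,y)}}\in L^1(\Omega\times\Omega)$, so that the weak-$\ast$ convergence of $g_p$ applies and the right-hand side converges by $u_p\rightharpoonup u$ in $L^2(\Omega)$ --- and extend to all $\varphi\in W^{s(\cdot,\cdot),1}(\Omega)\cap L^2(\Omega)$ using the density Theorem \ref{thm:density}. This gives $f-u\in B_1^{s(\cdot,\cdot)}u$, hence $R\big(I+B_1^{s(\cdot,\cdot)}\big)=L^2(\Omega)$, and with Theorem \ref{thm:vf1l-ca} the operator is $m$-completely accretive. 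I expect this flux identification, and the control of the degenerating term as $p\to 1^+$, to be the main obstacle.

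Finally, density of the domain and the subdifferential identification are routine once the above is in place. Since $C^\infty(\overline\Omega)\subset W^{s(\cdot,\cdot),1}(\Omega)\cap L^2(\Omega)$ is dense in $L^2(\Omega)$, it suffices to show $W^{s(\cdot,\cdot),1}(\Omega)\cap L^2(\Omega)\subset\overline{\mathrm{Dom}\big(B_1^{s(\cdot,\cdot)}\big)}^{\,L^2(\Omega)}$, which follows from the resolvent argument in Theorem \ref{thm:vfpl}: for $v\in W^{s(\cdot,\cdot),1}(\Omega)\cap L^2(\Omega)$ solve $n(v-u^{(n)})\in B_1^{s(\cdot,\cdot)}u^{(n)}$, test with $v-u^{(n)}$, and use $\|\eta^{(n)}\|_{L^\infty}\le 1$ together with the sign condition to obtain $\int_\Omega(v-u^{(n)})^2\,dx\le\tfrac1n\mathcal{B}_1^{s(\cdot,\cdot)}(v)$, whence $u^{(n)}\to v$ in $L^2(\Omega)$. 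For the inclusion $B_1^{s(\cdot,\cdot)}\subset\partial\mathcal{B}_1^{s(\cdot,\cdot)}$, given $v\in B_1^{s(\cdot,\cdot)}u$ and any $w\in W^{s(\cdot,\cdot),1}(\Omega)\cap L^2(\Omega)$, I take $\varphi=w-u$ and use $\eta(x,y)\big[(w(y)-w(x))-(u(y)-u(x))\big]\le |w(y)-w(x)|-|u(y)-u(x)|$ to obtain
\[\int_\Omega v(x)\big(w(x)-u(x)\big)\,dx \le \mathcal{B}_1^{s(\cdot,\cdot)}(w)-\mathcal{B}_1^{s(\cdot,\cdot)}(u).\]
Since $\partial\mathcal{B}_1^{s(\cdot,\cdot)}$ is maximal monotone and $B_1^{s(\cdot,\cdot)}$ is $m$-accretive, hence maximal monotone in $L^2(\Omega)$, the inclusion forces $B_1^{s(\cdot,\cdot)}=\partial\mathcal{B}_1^{s(\cdot,\cdot)}$.
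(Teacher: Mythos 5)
Your overall scheme is the paper's: approximate by the $p$-Laplacian problems of Theorem \ref{thm:vfpl}, extract uniform bounds, pass to the limit in the flux to verify the range condition, then get density via the resolvent estimate and the subdifferential identity via the numerical inequality plus maximal monotonicity. The last two steps in your proposal match the paper's almost verbatim and are fine. The one structural difference is in the approximation itself: you approximate with $B_p^{s(\cdot,\cdot)}$, whereas the paper approximates with $B_p^{s_p(\cdot,\cdot)}$ for the shifted order $s_p(x,y)=N-\tfrac{N}{p}+s(x,y)$ (restricting to $1<p<\tfrac{N}{N+s^+-1}$ so that $s_p^+<1$). This is not cosmetic: with $N+s_p(x,y)p=(N+s(x,y))p$ the flux factors as $\tfrac{k(x,y)}{|x-y|^{N+s(x,y)}}$ times $\bigl|\tfrac{u_p(y)-u_p(x)}{|x-y|^{N+s(x,y)}}\bigr|^{p-2}\tfrac{u_p(y)-u_p(x)}{|x-y|^{N+s(x,y)}}$, the second factor is bounded in the \emph{unweighted} $L^{p/(p-1)}(\Omega\times\Omega)$ by the energy bound, and the uniform $W^{s(\cdot,\cdot),1}$ estimate follows from H\"older against the constant $1$. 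In your unshifted version the energy controls $\int\!\!\int \tfrac{|u_p(y)-u_p(x)|^p}{|x-y|^{N+s(x,y)p}}$, and the naive H\"older step toward $[u_p]_{W^{s(\cdot,\cdot),1}}$ produces the divergent factor $\int\!\!\int|x-y|^{-N}$; your fallback to a reduced exponent $s_1<s^-$ does rescue a uniform $W^{s_1,1}$ bound (the conjugate factor is $\bigl(\int\!\!\int|x-y|^{-N+(s-s_1)p/(p-1)}\bigr)^{(p-1)/p}$, which stays bounded as $p\to1^+$), and the limit equation then requires additionally showing that the $p$-dependent kernels $|x-y|^{-(N+s(x,y)p)}$ converge to $|x-y|^{-(N+s(x,y))}$ strongly in a dual Lebesgue space against the weakly convergent flux. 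All of this can be pushed through, but you should be aware that you are re-deriving by hand what the $s_p$ substitution gives for free, and that your phrase ``bounded in $L^{p'}$ for the weight $\tfrac{k}{|x-y|^{N+s(x,y)p}}$'' refers to a $p$-dependent weighted space in which weak-$\ast$ compactness does not directly yield an $\eta\in L^\infty(\Omega\times\Omega,dx\,dy)$ with $\|\eta\|_{L^\infty}\le1$; you need the unweighted $L^{\sigma}$ bounds, as the paper arranges.

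The one place where your write-up has a genuine hole is the identification $k\eta\in k\,\mathrm{sign}(u(y)-u(x))$, which is the crux of the whole theorem and which you defer to ``a monotonicity (Minty-type) argument.'' The paper does this by taking $\varphi=u$ in the limit equation, bounding $\limsup_n$ of the $p_n$-energies by $\tfrac12\int\!\!\int\tfrac{k\,\eta\,(u(y)-u(x))}{|x-y|^{N+s}}$, and combining with the lower semicontinuity of the $W^{s(\cdot,\cdot),1}$-energy under $L^1$ convergence to force $\int\!\!\int k\,(|u(y)-u(x)|-\eta(u(y)-u(x)))\le0$, which together with $\|\eta\|_{L^\infty}\le1$ pins $\eta$ down. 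Your alternative observation --- that along the a.e.-convergent subsequence $|u_{p_n}(y)-u_{p_n}(x)|^{p_n-2}(u_{p_n}(y)-u_{p_n}(x))\to\mathrm{sign}(u(y)-u(x))$ pointwise on $\{u(y)\ne u(x)\}$ --- can indeed be turned into a complete proof, but not by itself: a weak limit need not coincide with an a.e.\ limit, so you must invoke the uniform $L^{\sigma}$ bounds ($\sigma>1$, finite measure) and Vitali's theorem to conclude that $\eta$ equals the pointwise limit on that set, and then use $\|\eta\|_{L^\infty}\le1$ on $\{u(y)=u(x)\}$. If you carry out that step explicitly, your route is a legitimate and somewhat more elementary substitute for the paper's energy-comparison argument; as written, the step that everything else hinges on is asserted rather than proved.
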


\begin{proof}
    Let us show that the operator $B_1^{s(\cdot,\cdot)}$ satisfies the range 
    condition $L^2(\Omega) \subset R \big( I + B_1^{s(\cdot,\cdot)} \big)$. From 
    now on $C$ denotes a positive constant independent of $p$, which can take different values 
    in different places. For $1<p<\frac{N}{N+s^+-1}$, take 
    \[s_p(x,y) \coloneqq \frac{N}{p_{s(x,y)}^*} = N-\frac{N}{p}+s(x,y),\]
    we have $s_p(x,y) = s_p(y,x)$ and 
    \[0<s^-<s_p^-= N-\frac{N}{p}+s^- \leq s_p(x,y)
    \leq s_p^+= N-\frac{N}{p}+s^+ < 1\]
    for all $(x,y) \in \Omega \times \Omega$. Then, given $f \in L^2(\Omega)$, 
    for $1<p<\frac{N}{N+s^+-1}$, applying Theorem \ref{thm:vfpl}, there exists 
    $u_p \in W^{s_p(\cdot,\cdot),p}(\Omega)$ such that $f-u_p \in B_p^{s_p(\cdot,\cdot)}u_p$. Now, 
    since $N+s_p(x,y)p=(N+s(x,y))p$, we have 
    \begin{equation}  \label{eqn:elliptic-vfpl-sp}
        \frac{1}{2} \int_{\Omega} \int_{\Omega} \frac{k(x,y)}{|x-y|^{(N+s(x,y))p}} U_p(x,y)
    (\varphi(y)-\varphi(x)) d x d y =\int_{\Omega} (f(x)-u_p(x)) \varphi(x) d x
    \end{equation}
    for all $\varphi \in W^{s_p(\cdot,\cdot),p}(\Omega) \cap L^2(\Omega)$. Moreover, since 
    $B_p^{s_p(\cdot,\cdot)}$ is completely accretive and $0 \in B_p^{s_p(\cdot,\cdot)}(0)$, it 
    is easy to see that $u_p \ll f$, i.e. for all $j \in \mathbf{J}_0$, 
    \[\int_{\Omega} j(u_p) dx \leq \int_{\Omega} j(f) dx,\]
    from which we deduce that $\|u_p\|_{L^2(\Omega)} \leq \|f\|_{L^2(\Omega)}$. 
    Therefore, there exists a sequence $p_n \searrow 1$ such that 
    $u_{p_n} \rightharpoonup u$ weakly in $L^2(\Omega)$, 
    and $\|u\|_{L^2(\Omega)} \leq \|f\|_{L^2(\Omega)}$. 
    On the other hand, taking $\varphi=u_p$ in \eqref{eqn:elliptic-vfpl-sp} we have
    \begin{equation}  \label{eqn:elliptic-vfpl-phiequp}
        \frac{1}{2} \int_{\Omega} \int_{\Omega} \frac{k(x,y)}{|x-y|^{(N+s(x,y))p}} |u_p(y)-u_p(x)|^p
     d x d y =\int_{\Omega} (f(x)-u_p(x)) u_p(x) d x \leq C_0.
    \end{equation}
    By Lemma \ref{lem:bounded} we know that 
    \begin{align*}
        \int_{\Omega} \int_{\Omega} \frac{|u_p(y)-u_p(x)|^p}{|x-y|^{(N+s(x,y))p}} dxdy 
      &\leq \frac{C}{C_1} + \frac{4\big(\|f\|_{L^2(\Omega)}^2+1\big) \big(|\Omega|^2+1\big)}{\min\big\{1,\delta_1^{N+2}\big\}} \\
      &= C\big(C_1,C_0,\delta_1,N,|\Omega|,\|f\|_{L^2(\Omega)}\big).
    \end{align*}
    Applying Hölder's inequality, we have 
    \begin{align*}
        [u_p]_{W^{s(\cdot,\cdot),1}(\Omega)}
      &=\int_{\Omega} \int_{\Omega} \frac{|u_p(y)-u_p(x)|}{|x-y|^{N+s(x,y)}} d x d y \\
      &\leq \left(\int_{\Omega} \int_{\Omega} \frac{|u_p(y)-u_p(x)|^p}{|x-y|^{(N+s(x,y))p}} dxdy\right)^{\frac{1}{p}} |\Omega \times \Omega|^{\frac{p-1}{p}} \\ 
      &\leq (C+1) \big(|\Omega|^2+1\big).
    \end{align*}
    Thus, $\|u_p\|_{W^{s(\cdot,\cdot),1}(\Omega)} \leq C$. By Theorem 
    \ref{thm:embedding}, we have that for a subsequence of $\{p_n\}$, which will not be 
    relabeled, $u_{p_n} \to u$ strongly in $L^1(\Omega)$ and $u \in W^{s(\cdot,\cdot),1}(\Omega)$. 
    
    Now, for any $1< \sigma < \frac{p}{p-1}$, using Hölder's inequality, 
    \begin{align*} 
        &\left\| \left| \frac{u_{p}(y)-u_{p}(x)}{|x-y|^{N+s(x,y)}}\right|^{p-2} \frac{u_{p}(y)-u_{p}(x)}{|x-y|^{N+s(x,y)}} \right\|_{L^{\sigma}(\Omega \times \Omega)}^{\sigma} \\  
    &\leq \left\| \left| \frac{u_{p}(y)-u_{p}(x)}{|x-y|^{N+s(x,y)}}\right|^{p-2} \frac{u_{p}(y)-u_{p}(x)}{|x-y|^{N+s(x,y)}} \right\|_{L^{\frac{p}{p-1} }(\Omega \times \Omega)}^{\sigma} |\Omega \times \Omega|^{1-\frac{p-1}{p} \sigma} \\ 
    &= \left(\int_{\Omega} \int_{\Omega} \frac{\left|u_{p}(y)-u_{p}(x)\right|^p}{|x-y|^{N+s_p(x,y) p}} d x d y \right)^{\frac{p-1}{p} \sigma} |\Omega \times \Omega|^{1-\frac{p-1}{p} \sigma} \\ 
    &\leq [u_p]_{W^{s_p(\cdot,\cdot),p}(\Omega)}^{\frac{p-1}{p} \sigma} |\Omega \times \Omega|^{1-\frac{p-1}{p} \sigma}\\ 
    &\leq (C+1)\big(|\Omega|^2+1\big). 
    \end{align*}
    By a diagonal argument, it is clear that there exists a subsequence of $\{p_n\}$, denoted equal, 
    \[\left| \frac{u_{p_n}(y)-u_{p_n}(x)}{|x-y|^{N+s(x,y)}}\right|^{p_n-2} \frac{u_{p_n}(y)-u_{p_n}(x)}{|x-y|^{N+s(x,y)}} \rightharpoonup \eta(x,y)\]
    weakly in $L^{\sigma}(\Omega \times \Omega)$, with $\eta$ antisymmetric 
    such that $\eta \in L^{\sigma}(\Omega \times \Omega)$ for all $1< \sigma < + \infty$.  
    Moreover, 
    \[\| \eta \|_{L^{\sigma}(\Omega \times \Omega)}^{\sigma} 
    \leq \underset{n \to \infty}{\underline{\text{lim}}} 
    \left\| \left| \frac{u_{p_{n}}(y)-u_{p_{n}}(x)}{|x-y|^{N+s(x,y)}}\right|^{p_n-2} 
    \frac{u_{p_{n}}(y)-u_{p_{n}}(x)}{|x-y|^{N+s(x,y)}} 
    \right\|_{L^{\sigma}(\Omega \times \Omega)}^{\sigma} \leq C,\]
    then we have $\eta \in L^{\infty}(\Omega \times \Omega)$ and 
    $\| \eta \|_{L^{\sigma}(\Omega \times \Omega)} \leq C^{\frac{1}{\sigma}}$. Letting 
    $\sigma \to + \infty$, we get $\| \eta \|_{L^{\infty}(\Omega \times \Omega)} \leq 1$.  
    Now let us pass to the limit in \eqref{eqn:elliptic-vfpl-sp}. Let us first 
    take $\varphi \in C^{\infty}(\overline{\Omega})$. Since 
    \begin{align*} 
        &\lim_{n \to \infty} \frac{1}{2} \int_{\Omega} \int_{\Omega} \frac{k(x,y)}{|x-y|^{N+s_{p_n}(x,y) p_n}}  U_{p_n}(x,y) (\varphi(y)-\varphi(x)) dxdy \\
        &=\lim_{n \to \infty} \frac{1}{2} \int_{\Omega} \int_{\Omega} \frac{k(x,y)}{|x-y|^{N+s(x,y)}} \left| \frac{u_{p_n}(y)-u_{p_n}(x)}{|x-y|^{N+s(x,y)}}\right|^{p_n-2} \frac{u_{p_n}(y)-u_{p_n}(x)}{|x-y|^{N+s(x,y)}} 
        \times (\varphi(y)-\varphi(x)) dxdy\\ 
        &=\frac{1}{2} \int_{\Omega} \int_{\Omega}  \frac{k(x,y)}{|x-y|^{N+s(x,y)}} \eta(x,y) (\varphi(y)-\varphi(x)) dxdy 
    \end{align*}
    we have
    \begin{equation}  \label{eqn:elliptic-vf1l}
        \frac{1}{2} \int_{\Omega} \int_{\Omega}  \frac{k(x,y)}{|x-y|^{N+s(x,y)}} \eta(x,y)
         (\varphi(y)-\varphi(x)) dxdy =\int_{\Omega} \left(f(x)-u(x)\right) \varphi(x) d x.
    \end{equation}
    Suppose now $\varphi \in W^{s(\cdot,\cdot),1}(\Omega) \cap L^2(\Omega)$. By 
    Theorem \ref{thm:density} we know that there exists $\varphi_n \in C^{\infty}(\overline{\Omega})$ such 
    that $\varphi_n \to \varphi$ in $L^2(\Omega)$ and 
    $[\varphi_n - \varphi]_{W^{s(\cdot,\cdot),1}(\Omega)} \to 0$ as $n \to \infty$. Therefore,  
    \begin{align*}
        &\left|\int_{\Omega} \int_{\Omega} \frac{k(x,y)\left(\varphi_{n}(y)-\varphi_{n}(x)\right)}{|x-y|^{N+s(x,y)}} dxdy 
        - \int_{\Omega} \int_{\Omega} \frac{k(x,y)\left(\varphi(y)-\varphi(x)\right)}{|x-y|^{N+s(x,y)}} dxdy\right| \\ 
        &\leq C_2 \int_{\Omega} \int_{\Omega} \frac{\left|(\varphi_{n}(y)-\varphi(y))-(\varphi_{n}(x)-\varphi(x))\right|}{|x-y|^{N+s(x,y)}} dxdy \\ 
        &= C_2 [\varphi_n - \varphi]_{W^{s(\cdot,\cdot),1}(\Omega)} \to 0
    \end{align*}
    as $n \to \infty$. By Fatou's Lemma and \eqref{eqn:elliptic-vf1l}, we have 
    \begin{align*}
        &\frac{1}{2} \int_{\Omega} \int_{\Omega}\left(\frac{k(x,y)|\varphi(y)-\varphi(x)|}{|x-y|^{N+s(x,y)}}-\frac{k(x,y)}{|x-y|^{N+s(x,y)}} \eta(x, y)(\varphi(y)-\varphi(x))\right) dxdy \\ 
        &\leq \underset{n \to \infty}{\underline{\text{lim}}} \frac{1}{2} \int_{\Omega} \int_{\Omega}\left(\frac{k(x,y)\left|\varphi_{n}(y)-\varphi_{n}(x)\right|}{|x-y|^{N+s(x,y)}}
        -\frac{k(x,y)\eta(x, y)\left(\varphi_{n}(y)-\varphi_{n}(x)\right)}{|x-y|^{N+s(x,y)}} \right) dxdy \\ 
        &= \frac{1}{2} \int_{\Omega} \int_{\Omega} \frac{k(x,y)}{|x-y|^{N+s(x,y)}}|\varphi(y)-\varphi(x)| dxdy -\int_{\Omega}(f(x)-u(x)) \varphi(x) dx, 
    \end{align*}
    which implies 
    \[\frac{1}{2} \int_{\Omega} \int_{\Omega}  \frac{k(x,y)}{|x-y|^{N+s(x,y)}} \eta(x,y)
    (\varphi(y)-\varphi(x)) dxdy - \int_{\Omega} \left(f(x)-u(x)\right) \varphi(x) d x \geq 0\]
    for all $\varphi \in W^{s(\cdot,\cdot),1}(\Omega) \cap L^2(\Omega)$, and hence, since 
    the above inequality is also true for $-\varphi$, we get the equality 
    \begin{equation}  \label{eqn:elliptic-vf1l-2}
        \frac{1}{2} \int_{\Omega} \int_{\Omega}  \frac{k(x,y)}{|x-y|^{N+s(x,y)}} \eta(x,y)
    (\varphi(y)-\varphi(x)) dxdy - \int_{\Omega} \left(f(x)-u(x)\right) \varphi(x) d x = 0
    \end{equation}
    for all $\varphi \in W^{s(\cdot,\cdot),1}(\Omega) \cap L^2(\Omega)$.

    To finish the proof of the range condition, we only need to show that 
    \[k(x,y)\eta(x,y) \in k(x,y) \mathrm{sign} (u(y)-u(x))\]
    for a.e. $(x,y) \in \Omega \times \Omega$. By \eqref{eqn:elliptic-vfpl-phiequp} 
    for $p_n$ and taking $\varphi=u$ in \eqref{eqn:elliptic-vf1l-2}, we have 
    \begin{align*}
        &\frac{1}{2} \int_{\Omega} \int_{\Omega}  \frac{k(x,y)}{|x-y|^{(N+s(x,y)) p_{n}}}\left|u_{p_{n}}(y)-u_{p_{n}}(x)\right|^{p_{n}} d x d y \\ 
        &= \int_{\Omega}\left(f(x)-u_{p_{n}}(x)\right) u_{p_{n}}(x) d x \\  
        &=  \int_{\Omega}(f(x)-u(x)) u(x) d x-\int_{\Omega} f(x)\left(u(x)-u_{p_{n}} (x)\right) d x \\
        &\quad+2 \int_{\Omega} u(x)\left(u(x)-u_{p_{n}}(x)\right) d x-\int_{\Omega}(u(x)-u_{p_{n}}(x))^{2} d x \\ 
        &\leq \frac{1}{2} \int_{\Omega} \int_{\Omega} \frac{k(x,y)}{|x-y|^{N+s(x,y)}} \eta(x, y)(u(y)-u(x)) d x d y 
        -\int_{\Omega} f(x)\left(u(x)-u_{p_{n}}(x)\right) d x \\ 
        &\quad+2 \int_{\Omega} u(x)\left(u(x)-u_{p_{n}}(x)\right) d x,
    \end{align*} 
    Then, taking limit as $n \to \infty$, we get
    \begin{equation*} 
        \underset{n \to \infty}{\overline{\text{lim}}} 
    \frac{1}{2}\int_{\Omega} \int_{\Omega}  \frac{k(x,y)}{|x-y|^{(N+s(x,y)) p_{n}}}\left|u_{p_{n}}(y)-u_{p_{n}}(x)\right|^{p_{n}} dxdy 
     \leq \frac{1}{2}\int_{\Omega} \int_{\Omega}  \frac{k(x,y)}{|x-y|^{N+s(x,y)}} \eta(x,y) (u(y) - u(x)) dxdy,
    \end{equation*} 
    By the lower semi-continuity in $L^1(\Omega)$ of $[\cdot]_{W^{s(\cdot,\cdot),1}}$, 
    we have 
    \begin{align*}
        &\frac{1}{2} \int_{\Omega} \int_{\Omega} \frac{k(x,y)}{|x-y|^{N+s(x,y)}}|u(y)-u(x)| dxdy \\ 
        &\leq \underset{n \to \infty}{\underline{\text{lim}}} \frac{1}{2} \int_{\Omega} \int_{\Omega} \frac{k(x,y)}{|x-y|^{N+s(x,y)}}\left|u_{p_{n}}(y)-u_{p_{n}}(x)\right| dxdy \\ 
        &\leq\underset{n \to \infty}{\underline{\text{lim}}} \frac{1}{2} \left(\int_{\Omega} \int_{\Omega} \frac{k(x,y)}{|x-y|^{(N+s(x,y)) p_{n}}}\left|u_{p_{n}}(y)-u_{p_{n}}(x)\right|^{p_{n}} dxdy\right)^{\frac{1}{p_{n}}}
        \left(C_2|\Omega \times \Omega|\right)^{\frac{p_{n}-1}{p_{n}}}\\  
        &\leq\frac{1}{2} \int_{\Omega} \int_{\Omega} \frac{k(x,y)}{|x-y|^{N+s(x,y)}} \eta(x, y)(u(y)-u(x)) dxdy,
    \end{align*}
    from which we obtain that 
    \[k(x,y)\eta(x,y) \in k(x,y) \mathrm{sign} (u(y)-u(x))\]
    for a.e. $(x,y) \in \Omega \times \Omega$. Then, we 
    have $f-u \in B_1^{s(\cdot,\cdot)}u$. Hence, by Lemma \ref{thm:vf1l-ca} we 
    obtain that $B_1^{s(\cdot,\cdot)}$ is $m$-completely accretive in $L^2(\Omega)$.

    Let us now see that $\mathrm{Dom}(B_1^{s(\cdot,\cdot)})$ is dense $L^2(\Omega)$. To 
    see this fact it is enough to show that 
    \[C^{\infty}(\overline{\Omega}) \subset
     \overline{\mathrm{Dom}\big(B_1^{s(\cdot,\cdot)}\big)} ^ {L^{2}(\Omega)}.\]
    Given $v \in C^{\infty}(\overline{\Omega})$, since $B_1^{s(\cdot,\cdot)}$ 
    is $m$-completely accretive in $L^2(\Omega)$, there 
    exists $u_n \in \mathrm{Dom}\big(B_1^{s(\cdot,\cdot)}\big)$ such that 
    $n(v-u_n) \in B_1^{s(\cdot,\cdot)}u_n$. Hence, there exists 
    $\eta_n \in L^{\infty}(\Omega \times \Omega)$, 
    $\eta_n(x,y)=-\eta_n(y,x)$ for almost all $(x,y) \in \Omega \times \Omega$, 
    $\|\eta_n\| \leq 1$, such that 
    \[\frac{1}{2} \int_{\Omega} \int_{\Omega}  \frac{k(x,y)}{|x-y|^{N+s(x,y)}} \eta_n(x,y)
    (\varphi(y)-\varphi(x)) dxdy = n \int_{\Omega} \left(v(x)-u_n(x)\right) \varphi(x) d x\]
    for all $\varphi \in W^{s(\cdot,\cdot),1}(\Omega) \cap L^2(\Omega)$, and 
    \[k(x,y)\eta_n(x,y) \in k(x,y) \mathrm{sign} (u_n(y)-u_n(x))\]
    for a.e. $(x,y) \in \Omega \times \Omega$. Then, taking $\varphi = v - u_n$, 
    we have 
    \begin{align*} 
       & \int_{\Omega} (v(x)-u_{n}(x))^2 d x \\
       &= \frac{1}{2n} \int_{\Omega} \int_{\Omega} \frac{k(x,y)}{|x-y|^{N+s(x,y)}} \eta_n(x,y) 
       (v(y)-u_n(y)-(v(x)-u_n(x))) dxdy \\
       &\leq \frac{1}{2n} \int_{\Omega} \int_{\Omega} \frac{k(x,y)}{|x-y|^{N+s(x,y)}}  
       |v(y)-v(x)| dxdy \\
       &\leq \frac{C_2}{2n} [v]_{W^{s(\cdot,\cdot), 1}(\Omega)},
    \end{align*}
    from which it follows that $u_n \to v$ in $L^2(\Omega)$.

    Finally, let us show that $B_1^{s(\cdot,\cdot)} = \partial \mathcal{B}_1^{s(\cdot,\cdot)}$. 
    Let $v \in B_1^{s(\cdot,\cdot)} u$, there exists $\eta \in L^{\infty}(\Omega \times \Omega)$, 
    $\eta(x,y)=-\eta(y,x)$ for almost all $(x,y) \in \Omega \times \Omega$, 
    $\|\eta\| \leq 1$, such that
    \[\frac{1}{2} \int_{\Omega} \int_{\Omega}  \frac{k(x,y)}{|x-y|^{N+s(x,y)}} \eta(x,y)
    (\varphi(y)-\varphi(x)) dxdy = \int_{\Omega} v(x) \varphi(x) d x,\]
    and 
    \[k(x,y)\eta(x,y) \in k(x,y) \mathrm{sign} (u(y)-u(x))\]
    for a.e. $(x,y) \in \Omega \times \Omega$. Then, 
    given $w \in W^{s(\cdot,\cdot),1}(\Omega) \cap L^2(\Omega)$, 
    we have 
    \begin{align*} 
     \int_{\Omega} v(x) (w(x)-u(x)) d x 
    &= \frac{1}{2} \int_{\Omega} \int_{\Omega} \frac{k(x,y)}{|x-y|^{N+s(x,y)}} \eta(x,y) (w(y)-w(x)) dxdy - \mathcal{B}_{1}^{s(\cdot,\cdot)}(u) \\ 
    &\leq  \mathcal{B}_{1}^{s(\cdot,\cdot)}(w) - \mathcal{B}_{1}^{s(\cdot,\cdot)}(u).
    \end{align*}
    Therefore, $v \in \partial \mathcal{B}_1^{s(\cdot,\cdot)} (u)$, and 
    consequently $B_1^{s(\cdot,\cdot)} \subset \partial \mathcal{B}_1^{s(\cdot,\cdot)}$. Then, 
    since $B_1^{s(\cdot,\cdot)}$ is $m$-completely 
    accretive in $L^2(\Omega)$, we get $B_1^{s(\cdot,\cdot)} 
    = \partial \mathcal{B}_1^{s(\cdot,\cdot)}$.
\end{proof}

Working as in the proof of Theorem \ref{thm:vfpl}, we get the following result about existence and 
uniqueness of weak solutions to problem \eqref{eqn:main}.

\begin{proof}[\bf Proof of Theorem \ref{thm:main}]
    By Theorem \ref{thm:mild-solution} and Theorem \ref{thm:vf1l} we have that, for every $f \in L^2(\Omega)$, there exists a 
    unique strong solution of the abstract Cauchy problem 
    \begin{equation}
        \left\{
        \begin{array}{ll}
            \displaystyle \vspace{0.25em} \frac{d u}{d t} + B_1^{s(\cdot,\cdot)}u \ni 0, & t \in (0,T), \\ 
            u(0) = f.
        \end{array}
        \right. \label{eqn:abc-vf1l}
    \end{equation}
    Now, the concept of weak solution of problem \eqref{eqn:main} coincides with the 
    concept of strong solution of \eqref{eqn:abc-vf1l}, and the proof of the existence 
    and uniqueness concludes.
\end{proof}

\section{Some properties of the proposed model} \label{6}

In this section, we investigate some properties of weak solutions of problem \eqref{eqn:main}. 
Firstly, let us build the extremum principle as follows.

\begin{theorem} \label{thm:extremum-principle}
    Suppose $f \in L^2(\Omega)$ with $\underset{x \in \Omega}{\mathrm{inf}}~ f > 0$ and 
    $\underset{x \in \Omega}{\mathrm{sup}}~ f < + \infty$. Let $u$ be the weak solution of 
    problem \eqref{eqn:main} with the initial data $f$, then for every $t>0$,
    \begin{equation}
        \inf_{x \in \Omega} f \leq u \leq \sup_{x \in \Omega} f, 
        \quad \mathrm{a.e.} \ x \in \Omega. \label{eqn:extremum}
    \end{equation}
\end{theorem}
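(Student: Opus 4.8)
The plan is to exploit the fact that the weak solution $u$ is the strong solution of the abstract Cauchy problem $\frac{du}{dt} + B_1^{s(\cdot,\cdot)} u \ni 0$, where $B_1^{s(\cdot,\cdot)} = \partial \mathcal{B}_1^{s(\cdot,\cdot)}$ is $m$-completely accretive, and to compare $u(t)$ with the constant functions $m = \inf_{x \in \Omega} f$ and $M = \sup_{x \in \Omega} f$. The guiding principle is that complete accretivity yields an order-preserving (comparison) property for the semigroup, and constants are equilibria of the operator: since for a constant function $c$ the differences $c(y)-c(x)$ vanish identically, one checks directly from the definition of $B_1^{s(\cdot,\cdot)}$ (taking $\eta \equiv 0$, which is admissible because $0 \in \mathrm{sign}(0)$) that $0 \in B_1^{s(\cdot,\cdot)} c$ for every constant $c$. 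Thus the constants $m$ and $M$ are stationary solutions of the Cauchy problem.

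First I would establish the comparison principle for the flow generated by $B_1^{s(\cdot,\cdot)}$. Because $B_1^{s(\cdot,\cdot)}$ is completely accretive, the associated resolvents $(I + \lambda B_1^{s(\cdot,\cdot)})^{-1}$ are order-preserving contractions; passing to the limit in the Crandall--Liggett exponential formula, the semigroup $S(t)$ is likewise order-preserving, i.e. $u_0 \leq v_0$ a.e. implies $S(t)u_0 \leq S(t)v_0$ a.e. for all $t>0$. Concretely, the order-preservation at the resolvent level follows by testing the equation for the difference of two resolvent solutions against an appropriate truncation $q \in \mathbf{P}_0$ approximating the positive-part sign, exactly as in the complete-accretivity computation of Theorem \ref{thm:vf1l-ca}; the key point is that the double-integral term has a definite sign because both $q$ and the multivalued sign are monotone non-decreasing.

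Then I would apply this comparison with the two stationary solutions. Since $m \leq f \leq M$ a.e. by hypothesis, and since $S(t)m = m$ and $S(t)M = M$ for all $t$ (constants being equilibria), order-preservation gives
\[
m = S(t)m \leq S(t)f = u(t) \leq S(t)M = M \quad \text{a.e. in } \Omega,
\]
for every $t>0$, which is precisely \eqref{eqn:extremum}. The hypotheses $\inf f > 0$ and $\sup f < +\infty$ guarantee that $m$ and $M$ are finite positive constants lying in $L^2(\Omega)$, so the comparison is legitimate within the functional-analytic framework.

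The main obstacle I anticipate is making the comparison principle fully rigorous at the level of the mild/strong solution rather than merely asserting it. The cleanest route is to prove order-preservation for the resolvent operator $(I + \lambda B_1^{s(\cdot,\cdot)})^{-1}$ directly and then invoke the general nonlinear semigroup machinery (the Crandall--Liggett theorem, as referenced via \cite{BENILAN199141, ANDREU2010}) to transfer it to the semigroup. A subtle point in the resolvent computation is that $B_1^{s(\cdot,\cdot)}$ is multivalued, so when comparing solutions $u_\lambda$ (with initial datum $f$) against the constant $M$ one must select compatible antisymmetric kernels $\eta$ and $\eta_M \equiv 0$ and verify that testing the difference against $q\big((u_\lambda - M)^+\big)$ still produces a nonnegative double integral; this is where the structural monotonicity established earlier does the real work. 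Once order-preservation of the resolvents is in hand, the remainder is routine.
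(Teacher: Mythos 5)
Your proof is correct, but it takes a genuinely different route from the paper's. The paper argues directly at the level of the weak formulation: with $I=\inf f$ it tests \eqref{eqn:main-weak} against the truncation $u_-=\min\{u-I,0\}$, splits the double integral according to where $u<I$ and $u\geq I$ to show the dissipation term is nonnegative (using $k\eta\in k\,\mathrm{sign}(u(y)-u(x))$), and concludes that $\int_\Omega|u_-|^2\,dx$ is non-increasing and starts from zero; the upper bound is symmetric with $\max\{u-S,0\}$. You instead observe that constants are equilibria of $B_1^{s(\cdot,\cdot)}$ (taking $\eta\equiv 0\in\mathrm{sign}(0)$) and invoke order-preservation of the semigroup coming from complete accretivity. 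Both rest on the same monotonicity structure, and your route is legitimate: the order-preservation you need is exactly the comparison principle the paper itself establishes immediately afterwards in Theorem \ref{thm:contraction-comparison} (complete accretivity implies $T$-accretivity, hence $T$-contractive resolvents and $\int_\Omega(u_1(t)-u_2(t))^+\,dx\leq\int_\Omega(f_1-f_2)^+\,dx$), so in the paper's logical order your argument would be a two-line corollary of that result. What your approach buys is brevity and a conceptual explanation (constants are stationary, the flow preserves order); what the paper's direct energy argument buys is self-containedness within the weak formulation and independence from the requirement that the constant functions $m$ and $M$ be admissible $L^2$ initial data, which here is harmless since $|\Omega|<\infty$ is implicitly assumed throughout. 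The one point you should make fully explicit is that the constant $c$, as a stationary strong solution of the abstract Cauchy problem, coincides with $S(t)c$ by the uniqueness part of Theorem \ref{thm:mild-solution}; with that said, the remainder is routine.
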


\begin{proof} 
    Let $I=\underset{x \in \Omega}{\mathrm{inf}}~ f$, $S=\underset{x \in \Omega}{\mathrm{sup}}~ f$. 
    Denote $u_-=\min\{u-I,0\}$, taking $\varphi=u_-$ in \eqref{eqn:main-weak}, we know that 
    there exists a function $\eta(t,\cdot,\cdot) \in L^{\infty}(\Omega \times \Omega)$, 
    $\eta(t,x,y)=-\eta(t,y,x)$ for almost all $(x,y) \in \Omega \times \Omega$, 
    $\|\eta(t,\cdot,\cdot)\|_{L^{\infty}(\Omega \times \Omega)} \leq 1$ and 
    $k(x,y)\eta(t,x,y) \in k(x,y) \mathrm{sign} (u(t,y)-u(t,x))$
    for a.e. $(x,y) \in \Omega \times \Omega$, such that 
    \begin{equation}
        \frac{1}{2} \frac{d}{dt} \int_{\Omega} |u_-(x,t)|^2 d x
        +\frac{1}{2} \int_{\Omega} \int_{\Omega} \frac{k(x,y)}{|x-y|^{N+s(x,y)}}
        \eta(t,x,y)(u_-(y)-u_-(x)) d x d y =0, \label{eqn:main-weak-mp}
    \end{equation}
    for almost all $t>0$. For the second term on the left hand of \eqref{eqn:main-weak-mp}, 
    we have 
    \begin{align*}
        &\int_{\Omega} \int_{\Omega} \frac{k(x,y)}{|x-y|^{N+s(x,y)}}
        \eta(t,x,y)(u_-(y)-u_-(x)) d x d y \\  
        & = \int_{\{y:u(y)<I\}} \int_{\{x:u(x)<I\}} 
        \frac{k(x,y)}{|x-y|^{N+s(x,y)}} \eta(t,x,y)(u(y)-u(x)) d x d y \\
        & \quad +2 \int_{\{y:u(y)<I\}} \int_{\{x:u(x) \geq I\}} 
        \frac{k(x,y)} {|x-y|^{N+s(x,y)}}\eta(t,x,y)(u(y)-I) d x d y\\ 
        & \geq 0
    \end{align*}
    for almost all $t>0$. Then 
    \[\frac{1}{2} \frac{d}{dt} \int_{\Omega} |u_-(x,t)|^2 d x \leq 0\] 
    for all $t>0$ due to the smoothness of $t \mapsto u(t)$ for $t>0$. 
    Therefore, $\displaystyle \frac{1}{2} \frac{d}{dt} \int_{\Omega} |u_-(x,t)|^2 d x$ is 
    decreasing in $t$, and since for all $t>0$,
    \[0 \leq \int_{\Omega} |u_-(x,t)|^2 d x \leq \int_{\Omega} |u_-(x,0)|^2 d x = 0,\] 
    we have that
    \[\int_{\Omega} |u_-(x,t)|^2 d x = 0\] 
    for all $t>0$, and so $u(t) \geq I$ a.e. on $\Omega$, $\forall t > 0$. Denote 
    $u_+=\max\{u-S,0\}$, a similar argument yields 
    that $u(t) \leq S$ a.e. on $\Omega$, $\forall t > 0$. The 
    extremum principle \eqref{eqn:extremum} is followed directly.   
\end{proof}

By utilizing the properties of completely accretive operators, it is straightforward 
to obtain the following contraction principle and comparison principle.

\begin{theorem} \label{thm:contraction-comparison}
    Suppose $f_1, \ f_2 \in L^2(\Omega)$ and $u_1, \ u_2$ are weak solutions of 
    problem \eqref{eqn:main} with the initial data $f_1, \ f_2$, then for every $t>0$,
    \begin{equation}
        \int_{\Omega} (u_1(t)-u_2(t))^+ dx \leq \int_{\Omega} (f_1-f_2)^+ dx. \label{eqn:contraction}
    \end{equation} 
    Moreover, if $f_1 \leq f_2$, then $u_1 \leq u_2$ a.e. on $\Omega$.
\end{theorem}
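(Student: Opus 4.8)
The plan is to read \eqref{eqn:contraction} off directly from the complete accretivity of $B_1^{s(\cdot,\cdot)}$ established in Theorem \ref{thm:vf1l-ca}, and then obtain the comparison principle as an immediate corollary. By Theorem \ref{thm:vf1l} together with the identification of weak and strong solutions recorded in the proof of Theorem \ref{thm:main}, each $u_i$ solves the abstract Cauchy problem \eqref{eqn:abc-vf1l}, so that $\frac{\partial u_i}{\partial t}(t) = -v_i(t)$ with $v_i(t) \in B_1^{s(\cdot,\cdot)} u_i(t)$ for a.e. $t \in (0,T)$; in particular $u_i(t) \in W^{s(\cdot,\cdot),1}(\Omega) \cap L^2(\Omega)$ for a.e. $t$, so the pair is admissible in the accretivity inequality.

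First I would fix $q \in \mathbf{P}_0$ and apply Theorem \ref{thm:vf1l-ca} to the pair $v_i(t) \in B_1^{s(\cdot,\cdot)} u_i(t)$, obtaining $\int_{\Omega} (v_1(t)-v_2(t))\, q(u_1(t)-u_2(t))\,dx \ge 0$ for a.e. $t$. Setting $j_q(r) = \int_0^r q(\sigma)\,d\sigma$, which is convex, nonnegative, and $C^1$ with $j_q(0)=0$ (hence $j_q \in \mathbf{J}_0$), and substituting $v_i = -\frac{\partial u_i}{\partial t}$, the integrand is recognized as $-\frac{\partial}{\partial t} j_q(u_1-u_2)$. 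Integrating in time over $(0,t)$ then gives
\[\int_{\Omega} j_q(u_1(t)-u_2(t))\,dx \le \int_{\Omega} j_q(f_1-f_2)\,dx,\]
so that $t \mapsto \int_{\Omega} j_q(u_1(t)-u_2(t))\,dx$ is non-increasing.

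Next I would choose smooth non-decreasing functions $q_n$ that vanish on $(-\infty,1/n]$ and equal $1$ on $[2/n,+\infty)$, so that $j_{q_n}(r) \to r^+$ pointwise and $0 \le j_{q_n}(r) \le r^+$. Such steep $q_n$ need not satisfy $0 \le q_n' \le 1$, but the inequality $\int_{\Omega}(v_1-v_2)\,q(u_1-u_2)\,dx \ge 0$ is invariant under replacing $q$ by a positive multiple of itself; normalizing $q_n$ by $\|q_n'\|_{L^{\infty}}$ produces an element of $\mathbf{P}_0$, so the estimate of the previous paragraph persists for $q_n$. Applying Fatou's lemma on the left and the bound $j_{q_n}(r) \le r^+$ on the right yields
\[\int_{\Omega} (u_1(t)-u_2(t))^+\,dx \le \liminf_{n \to \infty}\int_{\Omega} j_{q_n}(u_1(t)-u_2(t))\,dx \le \int_{\Omega} (f_1-f_2)^+\,dx,\]
which is exactly \eqref{eqn:contraction}. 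Finally, if $f_1 \le f_2$ then the right-hand side vanishes, forcing $(u_1(t)-u_2(t))^+ = 0$ a.e., i.e. $u_1 \le u_2$ a.e. on $\Omega$ for every $t>0$.

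The main obstacle I anticipate is the time differentiation: with only $u_i \in W^{1,1}(0,T;L^2(\Omega))$, one must justify the chain rule $\frac{\partial}{\partial t} j_q(u_1-u_2) = q(u_1-u_2)\,\frac{\partial}{\partial t}(u_1-u_2)$ and the absolute continuity of $t \mapsto \int_{\Omega} j_q(u_1-u_2)\,dx$, which I would handle using the boundedness and Lipschitz continuity of $q$ together with the Bochner regularity of the $u_i$. The rest is routine once the approximating family $q_n$ is set up so that $j_{q_n} \to r^+$ under the uniform control $j_{q_n}(r)\le r^+$, which is what makes the limit passage require nothing beyond the finiteness handled by Fatou's lemma.
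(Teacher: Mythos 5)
Your proof is correct, but it takes a genuinely different route from the paper's. The paper disposes of \eqref{eqn:contraction} in one line: since $B_1^{s(\cdot,\cdot)}$ is completely accretive it is in particular $T$-completely accretive, hence its resolvents $(I+\lambda B_1^{s(\cdot,\cdot)})^{-1}$ are $T$-contractions, and this property is transferred to the semigroup through the exponential formula of Nonlinear Semigroup Theory (the definitions and facts being cited from the appendix of \cite{ANDREU2010}). You instead exploit that the solutions are strong (available because $B_1^{s(\cdot,\cdot)}=\partial\mathcal{B}_1^{s(\cdot,\cdot)}$ by Theorem \ref{thm:vf1l}), test the complete-accretivity inequality of Theorem \ref{thm:vf1l-ca} along the two trajectories, integrate the primitive $j_q$ in time to conclude that $t\mapsto\int_\Omega j_q(u_1(t)-u_2(t))\,dx$ is non-increasing, and then recover $r\mapsto r^+$ as a pointwise limit of such primitives. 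Your scaling remark correctly handles the fact that the steep $q_n$ are not literally in $\mathbf{P}_0$, and the chain-rule issue you flag is the standard fact for compositions of bounded Lipschitz functions with $W^{1,1}(0,T;L^2(\Omega))$ curves; note also that $|j_q(r)|\le\frac{1}{2}\|q'\|_{L^\infty}r^2$, so all the integrals involved are finite for $L^2$ data. What the paper's route buys is brevity and applicability at the level of mild solutions, without using the subdifferential structure; what yours buys is a self-contained argument that avoids invoking $T$-accretivity and the resolvent-to-semigroup transfer. The deduction of the comparison principle from \eqref{eqn:contraction} is identical in both.
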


\begin{proof} 
    Since $B_1^{s(\cdot,\cdot)}$ is completely accretive in $L^2(\Omega)$, it is 
    also $T$-completely accretive in $L^2(\Omega)$ (see \cite[Definition A.55]{ANDREU2010}). Thus, the resolvents 
    of $B_1^{s(\cdot,\cdot)}$ are $T$-contractions, which implies the 
    contraction principle \eqref{eqn:contraction}.

    In the case of $f_1 \leq f_2$, it is clear that $(f_1-f_2)^+ = 0$. By 
    the contraction principle we obtain that 
    \[\int_{\Omega} (u_1(t)-u_2(t))^+ dx = 0,\]
    which implies $(u_1-u_2)^+ = 0$ for every $t>0$ and 
    a.e. $x \in \Omega$, i.e. $u_1 \leq u_2$ a.e. on $\Omega$.
\end{proof}

The following result implies the stability of the weak solutions.

\begin{theorem} \label{thm:stability}
    Assume $u(t)$ is the weak solution of problem \eqref{eqn:main} with the initial 
    data $f \in L^2(\Omega)$, then for every $t>0$,
    \[\int_{\Omega} u(t) dx = \int_{\Omega} f dx.\]
\end{theorem}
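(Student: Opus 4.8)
The plan is to test the weak formulation \eqref{eqn:main-weak} against the constant function $\varphi \equiv 1$ and exploit the antisymmetric, difference structure of the nonlocal term. First I would verify that $\varphi \equiv 1$ is an admissible test function: since $\Omega$ has finite measure in our setting, $1 \in L^2(\Omega)$, and since the increment $\varphi(y)-\varphi(x) \equiv 0$ forces $[\,1\,]_{W^{s(\cdot,\cdot),1}(\Omega)} = 0$, we have $1 \in W^{s(\cdot,\cdot),1}(\Omega) \cap L^2(\Omega)$. This is the natural choice because the only operator acting on $\varphi$ in \eqref{eqn:main-weak} is through its increments.

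Substituting $\varphi \equiv 1$ into \eqref{eqn:main-weak}, every term in the double integral on the left-hand side carries the factor $\varphi(y)-\varphi(x) = 0$, so the entire kernel integral vanishes regardless of the choice of the selection $\eta(t,\cdot,\cdot)$. This leaves
\[
\int_{\Omega} \frac{\partial u}{\partial t}(t,x)\,dx = 0
\]
for almost all $t \in (0,T)$.

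The remaining step is to transfer this pointwise-in-$t$ identity into a statement about the total mass $\int_\Omega u(t)\,dx$. Since $u \in W^{1,1}\!\left(0,T;L^2(\Omega)\right)$, the map $t \mapsto u(t)$ is absolutely continuous as an $L^2(\Omega)$-valued function with Bochner derivative $\partial_t u \in L^1\!\left(0,T;L^2(\Omega)\right)$. Because $|\Omega| < \infty$, the functional $L(v) = \int_\Omega v\,dx = \langle v, 1\rangle_{L^2(\Omega)}$ is bounded and linear on $L^2(\Omega)$; bounded linear functionals commute with the Bochner integral and hence with the time derivative, so $t \mapsto \int_\Omega u(t)\,dx$ is absolutely continuous with
\[
\frac{d}{dt}\int_{\Omega} u(t)\,dx = \int_{\Omega} \frac{\partial u}{\partial t}(t,x)\,dx = 0
\]
for a.e.\ $t \in (0,T)$. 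Integrating from $0$ to $t$ and invoking the initial condition $u(0,\cdot) = f$ yields the claimed conservation $\int_\Omega u(t)\,dx = \int_\Omega f\,dx$.

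I expect the only delicate point to be the justification of interchanging differentiation in $t$ with integration in $x$; this rests entirely on the $W^{1,1}\!\left(0,T;L^2(\Omega)\right)$ regularity built into the definition of weak solution together with the boundedness of $L$ on $L^2(\Omega)$. All the other steps are immediate once the constant test function annihilates the nonlocal term.
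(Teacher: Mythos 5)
Your proof is correct and follows exactly the paper's argument: test \eqref{eqn:main-weak} with $\varphi\equiv 1$, observe that the nonlocal term vanishes because $\varphi(y)-\varphi(x)=0$, and conclude that $t\mapsto\int_\Omega u(t)\,dx$ is constant. The only difference is that you spell out the justification for differentiating under the integral via the $W^{1,1}(0,T;L^2(\Omega))$ regularity, which the paper leaves implicit.
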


\begin{proof} 
    Taking $\varphi=1$ as test function in \eqref{eqn:main-weak}, we obtain that for 
    every $t \geq 0$, 
    \[\int_{\Omega} \frac{\partial u(t,x)}{\partial t} d x = 0,\]
    from which it follows that the function $t \mapsto \displaystyle \int_{\Omega} u(t,x)dx$ is 
    constant, thus 
    \[\int_{\Omega} u(t,x) dx = \int_{\Omega} u(0,x) dx = \int_{\Omega} f dx\] 
    for every $t>0$.
\end{proof}

At the end of this section, we present a result that reveals the asymptotic behavior 
of the weak solutions as time tends to infinity.

\begin{theorem} \label{thm:asymptotic-behavior}
    Assume $u(t)$ is the weak solution of problem \eqref{eqn:main} with the initial 
    data $f \in L^2(\Omega)$, then when $t \to + \infty$, $u$ converges 
    in the $L^1(\Omega)$-strong topology to the mean value of the initial 
    data $f_{\Omega}$, i.e.
    \[\lim_{t \to + \infty} \int \left|u(t,x)-f_{\Omega}\right| dx = 0.\]
\end{theorem}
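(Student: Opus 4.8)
The plan is to view problem \eqref{eqn:main} as the gradient flow $u_t=-\partial\mathcal{B}_1^{s(\cdot,\cdot)}(u(t))$ and to combine the energy dissipation of this flow with the mass conservation of Theorem \ref{thm:stability}, the contraction principle of Theorem \ref{thm:contraction-comparison}, and the compact embedding of Theorem \ref{thm:embedding}. First I would isolate the two monotone/conserved quantities that drive the argument. By Theorem \ref{thm:stability} the mass is conserved, so $u(t)_\Omega=f_\Omega$ for every $t>0$; in particular the constant $f_\Omega$ has the same mean as $u(t)$. Since $f_\Omega$ is constant it is a stationary solution of \eqref{eqn:main} (one has $0\in B_1^{s(\cdot,\cdot)}(f_\Omega)$, taking $\eta\equiv0$, as $u(y)-u(x)=0$ makes $\mathrm{sign}$ multivalued). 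Applying the contraction principle \eqref{eqn:contraction} with $f_2=f_\Omega$ (whose solution is the stationary $f_\Omega$) to the pair $u(t_0),f_\Omega$ and using the semigroup property shows that $t\mapsto\|u(t)-f_\Omega\|_{L^1(\Omega)}$ is non-increasing, hence possesses a limit $\ell\ge0$. It therefore suffices to produce a single sequence $t_n\to\infty$ along which $u(t_n)\to f_\Omega$ strongly in $L^1(\Omega)$.

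To produce such a sequence I would exploit that $B_1^{s(\cdot,\cdot)}=\partial\mathcal{B}_1^{s(\cdot,\cdot)}$ (Theorem \ref{thm:vf1l}), so the solution is the strong solution of a gradient flow of a convex lower semicontinuous functional. For $t>0$ the map $t\mapsto\mathcal{B}_1^{s(\cdot,\cdot)}(u(t))$ is non-increasing with $\frac{d}{dt}\mathcal{B}_1^{s(\cdot,\cdot)}(u(t))=-\|u_t(t)\|_{L^2(\Omega)}^2$, whence $\int_{t_0}^{\infty}\|u_t(t)\|_{L^2(\Omega)}^2\,dt\le\mathcal{B}_1^{s(\cdot,\cdot)}(u(t_0))<\infty$ and there exists $t_n\to\infty$ with $\|u_t(t_n)\|_{L^2(\Omega)}\to0$. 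Testing the identity for $-u_t(t_n)\in B_1^{s(\cdot,\cdot)}u(t_n)$ with $\varphi=u(t_n)$, and using $k\eta_n\in k\,\mathrm{sign}(u(t_n)(y)-u(t_n)(x))$ so that $\eta_n\,(u(t_n)(y)-u(t_n)(x))=|u(t_n)(y)-u(t_n)(x)|$ where $k>0$, gives $\mathcal{B}_1^{s(\cdot,\cdot)}(u(t_n))=-\int_\Omega u_t(t_n)\,u(t_n)\,dx\le\|u_t(t_n)\|_{L^2(\Omega)}\|f\|_{L^2(\Omega)}\to0$, where $\|u(t_n)\|_{L^2(\Omega)}\le\|f\|_{L^2(\Omega)}$ because $0$ is an equilibrium and the semigroup is an $L^2$-contraction.

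Next I would identify the limit. The bound $\mathcal{B}_1^{s(\cdot,\cdot)}(u(t))\le\mathcal{B}_1^{s(\cdot,\cdot)}(u(t_0))$ together with $\|u(t)\|_{L^2(\Omega)}\le\|f\|_{L^2(\Omega)}$ makes $\{u(t)\}_{t\ge t_0}$ bounded in $W^{s(\cdot,\cdot),1}(\Omega)$ by Lemma \ref{lem:bounded} (case $p=1$), so the compact embedding $W^{s(\cdot,\cdot),1}(\Omega)\hookrightarrow\hookrightarrow L^1(\Omega)$ of Theorem \ref{thm:embedding} (with $p=q=1$, legitimate since $s<N$ and $1<\frac{N}{N-s}$) lets me extract a subsequence of $\{u(t_n)\}$ converging strongly in $L^1(\Omega)$ to some $w\in W^{s(\cdot,\cdot),1}(\Omega)$. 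Lower semicontinuity of $\mathcal{B}_1^{s(\cdot,\cdot)}$ forces $\mathcal{B}_1^{s(\cdot,\cdot)}(w)\le\liminf_n\mathcal{B}_1^{s(\cdot,\cdot)}(u(t_n))=0$; since $k(x,y)\ge C_1$ on $\{|x-y|<\delta_1\}$ this gives $w(x)=w(y)$ for a.e.\ $(x,y)$ with $|x-y|<\delta_1$, and a chaining argument over the connected set $\Omega$ upgrades this to $w$ being constant. Passing the equality $u(t_n)_\Omega=f_\Omega$ to the $L^1$-limit yields $w=f_\Omega$, so $u(t_n)\to f_\Omega$ in $L^1(\Omega)$; combined with the monotonicity of $\|u(t)-f_\Omega\|_{L^1(\Omega)}$ this forces $\ell=0$ and proves the theorem.

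The main obstacle is the identification of the $\omega$-limit: one must convert the abstract dissipation $\int^{\infty}\|u_t\|^2<\infty$ into the quantitative statement $\mathcal{B}_1^{s(\cdot,\cdot)}(u(t_n))\to0$ and then combine it with the compact embedding and with lower semicontinuity to conclude that the limit is actually \emph{constant}, rather than merely energy-bounded; note that Theorem \ref{thm:poincare} cannot be applied directly to $\{u(t_n)\}$ because the full seminorm $[u(t_n)]_{W^{s(\cdot,\cdot),1}(\Omega)}$ stays bounded away from $0$ (only the local part controlled by $\mathcal{B}_1^{s(\cdot,\cdot)}$ vanishes). A secondary technical point is the chaining argument that promotes the local identity $w(x)=w(y)$ on $\{|x-y|<\delta_1\}$ to global constancy, which relies on the connectedness of $\Omega$.
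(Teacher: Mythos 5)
Your argument is correct, but it takes a genuinely different route from the paper's. The paper's proof is quantitative: testing with $\varphi=u(t)$ gives the $L^2$ dissipation identity, whose integration in time yields $\int_0^\infty[u(\tau)]_{W^{s(\cdot,\cdot),1}(\Omega)}\,d\tau\le 2\|f\|_{L^2(\Omega)}^2$; the Poincar\'e--Wirtinger inequality of Theorem \ref{thm:poincare} then bounds $\|u(\tau)-f_\Omega\|_{L^1(\Omega)}$ by $C[u(\tau)]_{W^{s(\cdot,\cdot),1}(\Omega)}$, and the monotonicity of $t\mapsto\|u(t)-f_\Omega\|_{L^1(\Omega)}$ (complete accretivity plus the fact that $f_\Omega$ is an equilibrium) combined with the average $\|u(t)-f_\Omega\|_{L^1(\Omega)}\le\frac{1}{t}\int_0^t\|u(\tau)-f_\Omega\|_{L^1(\Omega)}\,d\tau$ gives the explicit decay rate $O(1/t)$. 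You instead run a LaSalle-type $\omega$-limit argument: the gradient-flow dissipation produces $t_n\to\infty$ with $\mathcal{B}_1^{s(\cdot,\cdot)}(u(t_n))\to0$, compactness of the embedding $W^{s(\cdot,\cdot),1}(\Omega)\hookrightarrow L^1(\Omega)$ identifies a subsequential limit, vanishing energy plus chaining forces that limit to be the constant $f_\Omega$, and monotonicity upgrades subsequential to full convergence. Your route yields no rate, but it sidesteps a point the paper's proof glosses over and which you correctly flag: the dissipation only controls the $k$-weighted energy, and converting it to the full Gagliardo seminorm (needed for Theorem \ref{thm:poincare}) via Lemma \ref{lem:bounded} introduces an additive term depending on $\|u\|_{L^2(\Omega)}$ whose time integral is not obviously summable. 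The price you pay is twofold and should be made explicit in a final write-up: the chaining step requires $\Omega$ to be connected (or $\delta_1$-chain connected), a hypothesis the paper never states but which the conclusion implicitly needs; and the identity $\frac{d}{dt}\mathcal{B}_1^{s(\cdot,\cdot)}(u(t))=-\|u_t(t)\|_{L^2(\Omega)}^2$ together with the smoothing fact $\mathcal{B}_1^{s(\cdot,\cdot)}(u(t_0))<\infty$ for $t_0>0$ rests on the Br\'ezis regularity theory for subdifferential flows, which goes beyond what Theorem \ref{thm:mild-solution} explicitly records.
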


\begin{proof} 
    Taking $u(t)$ as test function in \eqref{eqn:main-weak}, we get
    \[-\frac{1}{2} \frac{d}{dt} \int_{\Omega} |u(x,t)|^2 d x
    =\frac{1}{2} \int_{\Omega} \int_{\Omega} \frac{k(x,y)}{|x-y|^{N+s(x,y)}}
    |u(y)-u(x)| d x d y\]
    for every $t \geq 0$, which implies 
    \[\int_{0}^{t}[u(\tau)]_{W^{s(\cdot,\cdot),1}(\Omega)} d \tau 
    \leq 2 \|f\|_{L^2(\Omega)}^2\]
    for every $t \geq 0$. Then, using Theorem \ref{thm:poincare} and 
    Theorem \ref{thm:stability}, we have 
    \[\int_{\Omega} \left|u(t,x)-f_{\Omega}\right| dx 
    =\int_{\Omega} \left|u(t,x)-u_{\Omega}\right| dx 
    \leq C [u(t,\cdot)]_{W^{s(\cdot,\cdot),1}}.\]

    Since $B_1^{s(\cdot,\cdot)}$ is completely accretive 
    and $0 \in B_1^{s(\cdot,\cdot)}(0)$, we have 
    \[\int_{\Omega} |u(t,x)-f_{\Omega}| dx 
    \leq \int_{\Omega} |u(\tau,x)-f_{\Omega}| dx\]
    if $t \geq \tau$. Then, for every $t > 0$, 
    \begin{align*}\int_{\Omega} |u(t,x)-f_{\Omega}| dx 
    &\leq \frac{1}{t} \int_{0}^{t} \int_{\Omega} |u(\tau,x)-f_{\Omega}| dx d \tau
    \leq \frac{C}{t} \int_{0}^{t}[u(\tau)]_{W^{s(\cdot,\cdot),1}(\Omega)} d \tau \\
    &\leq \frac{2C}{t} \|f\|_{L^2(\Omega)}^2. \end{align*}
    Let $t \to + \infty$, we obtain that 
    \[\lim_{t \to + \infty} \int \left|u(t,x)-f_{\Omega}\right| dx = 0,\]
    which concludes the proof.
\end{proof}

\section{Numerical experiments} \label{7}

In this section, we demonstrate the effectiveness of the proposed model in denoising 
with the help of several numerical examples. Firstly, we focus on the selection 
of appropriate functions $k(x,y)$ and $s(x,y)$.

To remove multiplicative noise effectively, it is necessary to 
select the appropriate weighted function $k(x,y)$. We rewrite $k(x,y)=a(x,y)b(x,y)$. $a(x,y)$ can 
be set to $a(x,y) \equiv  1$ or a gray value detection function 
$a(x,y) = \left(\frac{f_{\sigma}(x)f_{\sigma}(y)}{M^2}\right)^r$, where $f_{\sigma} = G_{\sigma} * f$, 
$M = \underset{x \in \Omega}{\mathrm{max}}~f_{\sigma}(x)$,
$G_{\sigma}(x) = \frac{1}{2\pi \sigma^2} \exp \big(-\frac{|x|^2}{2 \sigma^2}\big)$, $\sigma, r$ are 
given constants. $b(x,y)$ can be set as the boundary detection function in F1P-AA model, i.e.
\[b(x,y) =  \exp \left(- \frac{|g(x)-g(y)|^2}{h_g}\right) \chi_{\{|x-y| \leq \eta\}},\]
where $g = G_{\sigma_g} * f $, 
$G_{\sigma_g}(x) = \frac{1}{2\pi \sigma_g^2} \exp \big(-\frac{|x|^2}{2 \sigma_g^2}\big)$, 
$h_g, \sigma_g$ and $\eta$ are some given constants. If we 
select $a(x,y) = \left(\frac{f_{\sigma}(x)f_{\sigma}(y)}{M^2}\right)^r$, it is clear 
that $0 < a(x,y) \leq 1$ and $a(x,y)$ increases with the gray 
value of the smoothed noisy image $f_{\sigma}$. When the gray value is low, 
$a(x,y)$ becomes small which leads to slow 
diffusion at low gray value regions. When the gray value is high, 
$a(x,y)$ tends to $1$ and then the boundary 
detection function $b(x,y)$ will play the main role in the diffusion 
process. $b(x,y)$ becomes small when $|g(x)-g(y)|$ is large, which leads to 
the protection of edges. When $|g(x)-g(y)|$ is small, the diffusion will 
be fast and noise in flat regions will be removed efficiently.

In order for our model to be practically useful, it is fundamental that we determine 
the function $s(x,y)$. We first use a set of Gabor filters to extract the 
texture information from the noisy image, i.e. $h = H * f$,
where $H(x) = \sum_{k=1}^K H_k(x; \lambda_k, \theta_k, \sigma_k)$,  
$H_k(x; \lambda_k, \theta_k, \sigma_k) 
= \exp \left(-\frac{|x|^2}{2 \sigma_k^2}\right) 
\cos \left(\frac{2 \pi}{\lambda_k} b_k^{\mathrm{T}} x\right)$, 
$b_k = (\cos \theta_k, \sin \theta_k)^{\mathrm{T}}$. Then we choose the function $s(x,y)$ as   
\[s(x,y) = s^- + (s^+ - s^-) \exp \left(- m |h(x)-h(y)|^2\right),\]
where $\sigma_k, K, \lambda_k, \theta_k, s^-, s^+, m$ are some given constants. 
Gabor filters are widely used in texture analysis, as they have been found to 
be particularly appropriate for representing textures in specific directions 
in a localized region, see \cite{MANJUNATH1996837} and the references therein. 
A set of Gabor filters with different 
scales and with orientations in different directions can be used 
to obtain the texture feature $h$ of the input noisy image $f$. 
Precisely, $s(x,y) \in (0,1)$ is a symmetric function, $|h(x)-h(y)|^2$ can 
be regard as the proximity between the gray value $h(x)$ and $h(y)$. 
We can set $s^+$ as a value very close to $1$, when $|h(x)-h(y)|$ is small, 
$s(x,y)$ tends to $s^+$, which leads to an approximation of integer-order diffusion, 
removing noise in texture-poor regions. On the other hand, we 
can set $s^-$ as a non-extreme order, when $|h(x)-h(y)|$ is large, 
$s(x,y)$ tends to $s^-$, resulting in typical nonlocal diffusion, 
which preserves textures in texture-rich regions. 

Now we present the finite difference scheme for our model. Assume that $\tau$ 
to be the time step size and $h$ the space grid size. We take $\tau = 0.5$, $h = 1$ in 
practical calculation. The time and space discretizations are presented as follows:
\[\begin{array}{ll}
    t = n \tau, & n = 0,1,2,\cdots \\
    x = ih, & y = jh, \quad i = 1,2,\cdots,I, \quad j = 1,2,\cdots,J,
\end{array}\]
where $Ih \times Jh$ is the size of the original image. Denote by $u_{i,j}^n$ the 
approximation of $u(n \tau, ih, jh)$. Then the numerical 
approximation of problem \eqref{eqn:main} could be written as
\begin{align*}
    &u_{i,j}^{n+1} = u_{i,j}^{n} + \tau \sum_{\substack{
i-\eta \leq i' \leq i+\eta, \\ 
j-\eta \leq j' \leq j+\eta, \\
(i',j') \ne (i,j)}} \frac{k((i'h,j'h),(ih,jh))
\left(\mathrm{sign}_0(u_{i',j'}^{n} - u_{i,j}^{n})\right)}
{\left(h\sqrt{(i'-i)^2+(j'-j)^2}\right)^{2+s((i'h,j'h),(ih,jh))}} , \\
    &u_{i,j}^{0} = f_{i,j} = f(ih,jh), \quad 0 \leq i \leq I, \ 
    0 \leq j \leq J. 
\end{align*}

Through the above lines, we can obtain $u_{i,j}^1$ by $u_{i,j}^0$. The 
restoration quality is measured by the peak signal noise ratio (PSNR) and 
the structural similarity index measure (SSIM), which are defined as 
\[\begin{array}{c}
    \mathrm{PSNR} = 10 \log_{10} \left(\dfrac{\Sigma_{i,j} 255^2}
    {\Sigma_{i,j} (u_{i,j} - \bar{u}_{i,j})^2}\right), \\[2ex] 
    \mathrm{SSIM}(x,y) = \dfrac{(2\mu_x \mu_y +c_1)(2 \sigma_{xy} + c_2)}
    {\mu_x^2 + \mu_y^2 + c_1 + \sigma_x^2 + \sigma_y^2 + c_2},
\end{array}\]
where $\bar{u}$ is the original image and $u$ denotes the compared image, $c_1, c_2$ 
are two variables to stabilize the division with weak denominator, 
$\mu_x, \mu_y, \sigma_x, \sigma_y$ and $\sigma_{xy}$ are the local means, 
standard deviations and cross-covariance for image $x,y$, respectively. The 
better quality image will have higher values of PSNR and SSIM. Our algorithm 
stop when it achieves its maximal PSNR. 

To verify the effectiveness of our model, we test several images which 
are distorted by multiplicative Gamma noise with mean $1$ and $L \in \{1,4,10\}$. 
For illustration, the results for the 256-by-256 gray value texture1, texture2, hybrid, and 
the 512-by-512 gray value satellite1, satellite2 are presented, see the original test 
images in Figure \ref{original}. The results are compared with SO model 
\cite{SHI2008294}, AA model \cite{AUBERT2008925} and F1P-AA model \cite{GAO20224837}. 
The PSNR and SSIM values of different $L$ are list in Table \ref{results}.
 Parameters 
$\sigma, r, \sigma_g, h_g, \eta, s^-$ and $s^+$ are set as $1, 0.6, 15, 
10, 3, 0.5$ and $ 0.99$, respectively. The 
parameters $\lambda_k, \theta_k, \sigma_k$ of Gabor filters are 
selected following the method presented in \cite{MANJUNATH1996837}. We use 
$4$ orientations and $8$ scales, yielding a total of $K=32$ Gabor filters for each 
experiment. The parameter $m$ is set as $1$ or $\log_2 \left(\frac{2L+4}{3}\right)$.

\begin{figure}[htbp] 
    \centering
    \begin{minipage}[b]{.16\linewidth}
        \centering
        \includegraphics[width=\textwidth]{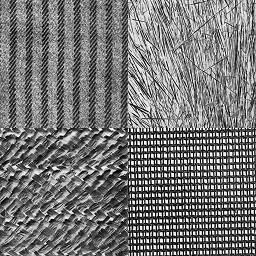}
        \subcaption{Texture1}
    \end{minipage} 
    \begin{minipage}[b]{.16\linewidth}
        \centering
        \includegraphics[width=\textwidth]{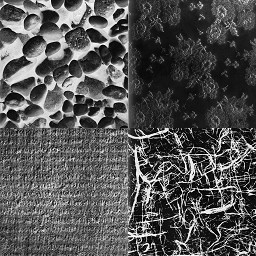}
        \subcaption{Texture2}
    \end{minipage}
    \begin{minipage}[b]{.16\linewidth}
        \centering
        \includegraphics[width=\textwidth]{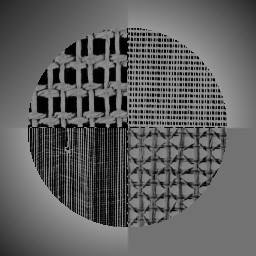}
        \subcaption{Hybrid}
    \end{minipage}
    \begin{minipage}[b]{.16\linewidth}
        \centering
        \includegraphics[width=\textwidth]{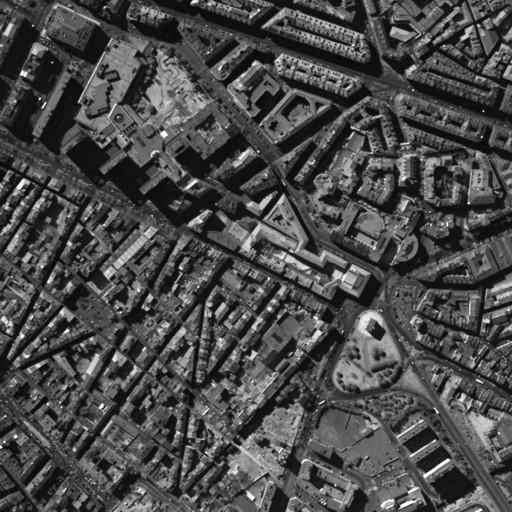}
        \subcaption{Satellite1}
    \end{minipage}
    \begin{minipage}[b]{.16\linewidth}
        \centering
        \includegraphics[width=\textwidth]{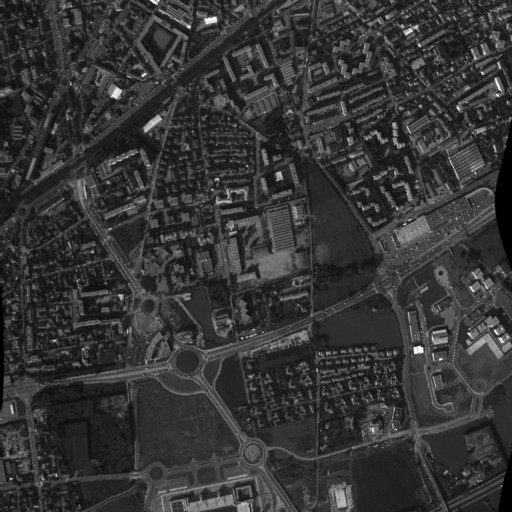}
        \subcaption{Satellite2}
    \end{minipage}
    \caption{Test images.} \label{original}
\end{figure}

\begin{table}[htbp] 
    \centering
    \caption{Comparison of PSNR and SSIM of the different models for gamma noises with $L=1,4,10$.}
    \label{results}
    \scalebox{0.9}{\begin{tabular}{c*{10}c}
        \toprule
        \multirow{2}*{} & \multicolumn{6}{c}{PSNR} & \multicolumn{2}{c}{SSIM} \\
        \cmidrule{3-10}
          &      & Ours  & SO  & AA  & F1P-AA & Ours  & SO  & AA  & F1P-AA \\
        \midrule
        \multirow{5}*{$L=1$} 
        & Texture1 & \textbf{12.54} & 11.07 & \textbf{12.54} & \textbf{12.54} & 0.4289 & 0.4127 & \textbf{0.4357} & 0.4288 \\
        & Texture2 & \textbf{14.71} & 13.29 & 13.36 & 14.70 & \textbf{0.4488} & 0.4232 & 0.4298 & 0.4484 \\
        & Hybrid & \textbf{18.34} & 14.41 & 16.78 & 18.33 & 0.5256 & 0.3394 & 0.5082 & \textbf{0.5258} \\
        & Satellite1 & 18.87 & 16.00 & \textbf{19.18} & 18.83 & \textbf{0.7091} & 0.6284 & 0.6985 & 0.6811 \\
        & Satellite2 & 22.74 & 19.10 & \textbf{23.21} & 22.70 & 0.6025 & 0.5520 & \textbf{0.6327} & 0.6000 \\
        \cmidrule{1-10}
        \multirow{5}*{$L=4$} & Texture1 & \textbf{15.68} & 15.35 & 15.52 & 15.51 & 0.6885 & \textbf{0.6896} & 0.6845 & 0.6813 \\
        & Texture2 & \textbf{17.96} & 17.73 & 17.88 & 17.67 & \textbf{0.6933} & 0.6912 & 0.6917 & 0.6831 \\
        & Hybrid & \textbf{21.19} & 20.28 & 19.16 & 20.44 & \textbf{0.6990} & 0.5467 & 0.6129 & 0.6309 \\
        & Satellite1 & \textbf{22.16} & 21.59 & 22.13 & 21.58 & 0.8681 & \textbf{0.8697} & 0.8567 & 0.8513 \\
        & Satellite2 & \textbf{25.42} & 24.71 & 25.39 & 24.73 & \textbf{0.8229} & 0.8204 & 0.8156 & 0.8056 \\
        \cmidrule{1-10}
        \multirow{5}*{$L=10$} & Texture1 & \textbf{18.23} & 18.03 & 18.03 & 18.05 & \textbf{0.8170} & 0.8047 & 0.8141 & 0.8129 \\
        & Texture2 & \textbf{20.43} & 20.23 & 20.10 & 20.26 & \textbf{0.8133} & 0.8054 & 0.8076 & 0.8111 \\
        & Hybrid & \textbf{23.61} & 23.42 & 22.71 & 22.62 & \textbf{0.7985} & 0.6082 & 0.7211 & 0.7229 \\
        & Satellite1 & \textbf{24.35} & 24.27 & 24.22 & 23.65 & 0.9280 & \textbf{0.9327} & 0.9230 & 0.9205 \\ 
        & Satellite2 & \textbf{27.47} & 27.23 & 27.30 & 26.91 & \textbf{0.9010} & 0.9008 & 0.8998 & 0.8959 \\ 
        \bottomrule
    \end{tabular}}
\end{table}

Now we report the numerical experiments of multiplicative noise removal for the 
original test images in Figure \ref{original}. All test images are corrupted by 
multiplicative Gamma noise with $L=4$ and the corresponding results are depicted 
in Figures \ref{texture1}-\ref{satellite2}. Figures \ref{texture1} and \ref{texture2} 
show the comparison of experimental results for two texture images. We find that Figures 
\ref{texture1-SO}, \ref{texture2-SO} and 
\ref{texture1-AA}, \ref{texture2-AA} lost more texture information 
in the images than Figures \ref{texture1-Ours}, \ref{texture2-Ours} and 
\ref{texture1-F1P-AA}, \ref{texture2-F1P-AA}. Contrariwise, our model exhibits 
comparable performance to the F1P-AA model in removing noise, while surpassing it 
in preserving texture information. In the experiment which is shown in 
Figure \ref{hybrid}, we utilize a test image that consists of textures and smooth surfaces. 
It is observed from Figures \ref{hybrid-SO} and \ref{hybrid-AA} that SO cause residual 
noise in homogeneous regions and AA fails to preserve texture information. Furthermore, 
our model performs better in preserving texture compared to the F1P-AA model, and also 
has favorable denoising effect across the entire image, avoiding the drawbacks of each 
model, see Figure \ref{hybrid-Ours} and Figure \ref{hybrid-F1P-AA}. As illustrated 
in Figures \ref{satellite1} and \ref{satellite2}, the denoising 
results of the AA and F1P-AA model exhibit a certain degree of oversmoothing, while 
the results of the SO contain more residual noise. In comparison, although there are 
some isolated white speckle points in Figure \ref{satellite1-Ours} and 
Figure \ref{satellite2-Ours}, our model achieves a balance between denoising 
effectiveness and texture preservation.

\begin{figure}[t]
    \centering
    \begin{minipage}[b]{.16\linewidth}
        \centering
        \includegraphics[width=\textwidth]{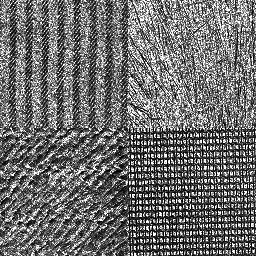}
        \subcaption{Noisy}
    \end{minipage} 
    \hspace{0.2em}
    \begin{minipage}[b]{.16\linewidth}
        \centering
        \includegraphics[width=\textwidth]{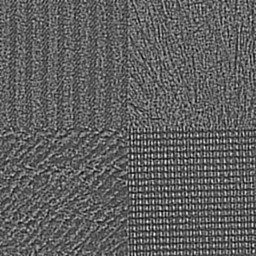}
        \subcaption{Gabor}
    \end{minipage}
    \hspace{0.2em}
    \begin{minipage}[b]{.16\linewidth}
        \centering
        \includegraphics[width=\textwidth]{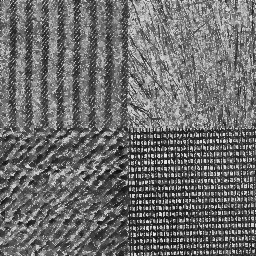}
        \subcaption{Ours} \label{texture1-Ours}
    \end{minipage}
    
    \begin{minipage}[b]{.16\linewidth}
        \centering
        \includegraphics[width=\textwidth]{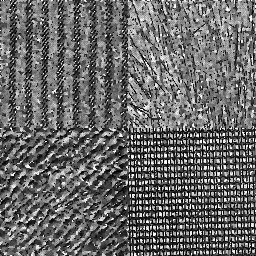}
        \subcaption{SO} \label{texture1-SO}
    \end{minipage}
    \hspace{0.2em}
    \begin{minipage}[b]{.16\linewidth}
        \centering
        \includegraphics[width=\textwidth]{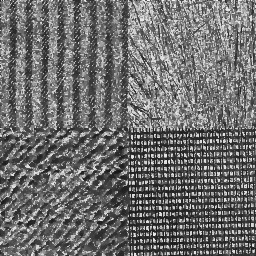}
        \subcaption{AA} \label{texture1-AA}
    \end{minipage}
    \hspace{0.2em}
    \begin{minipage}[b]{.16\linewidth}
        \centering
        \includegraphics[width=\textwidth]{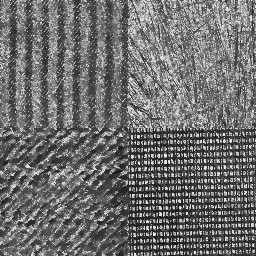}
        \subcaption{F1P-AA} \label{texture1-F1P-AA}
    \end{minipage}
    \caption{The texture1 image: (a) Noisy: $L=4$, PSNR $=13.7636$. 
    (b) Gabor filtered noisy image. (c)-(f) Denoised results.} \label{texture1}
\end{figure}

\begin{figure}[t]
    \centering
    \begin{minipage}[b]{.16\linewidth}
        \centering
        \includegraphics[width=\textwidth]{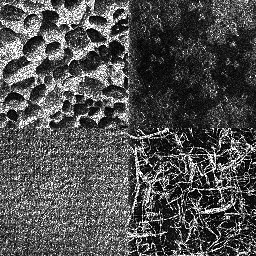}
        \subcaption{Noisy}
    \end{minipage} 
    \hspace{0.2em}
    \begin{minipage}[b]{.16\linewidth}
        \centering
        \includegraphics[width=\textwidth]{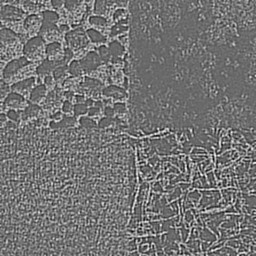}
        \subcaption{Gabor}
    \end{minipage}
    \hspace{0.2em}
    \begin{minipage}[b]{.16\linewidth}
        \centering
        \includegraphics[width=\textwidth]{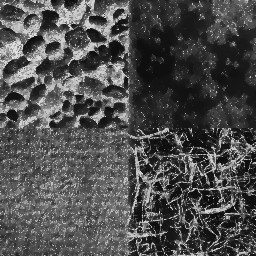}
        \subcaption{Ours} \label{texture2-Ours}
    \end{minipage}
    
    \begin{minipage}[b]{.16\linewidth}
        \centering
        \includegraphics[width=\textwidth]{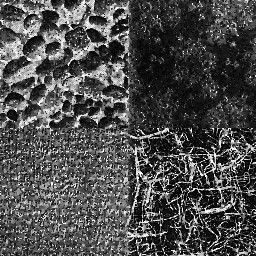}
        \subcaption{SO} \label{texture2-SO}
    \end{minipage}
    \hspace{0.2em}
    \begin{minipage}[b]{.16\linewidth}
        \centering
        \includegraphics[width=\textwidth]{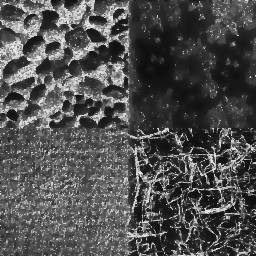}
        \subcaption{AA} \label{texture2-AA}
    \end{minipage}
    \hspace{0.2em}
    \begin{minipage}[b]{.16\linewidth}
        \centering
        \includegraphics[width=\textwidth]{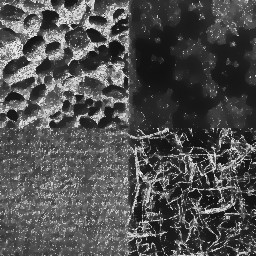}
        \subcaption{F1P-AA} \label{texture2-F1P-AA}
    \end{minipage}
    \caption{The texture2 image: (a) Noisy: $L=4$, PSNR $=15.5765$. 
    (b) Gabor filtered noisy image. (c)-(f) Denoised results.} \label{texture2}
\end{figure}

\begin{figure}[t]
    \centering
    \begin{minipage}[b]{.16\linewidth}
        \centering
        \includegraphics[width=\textwidth]{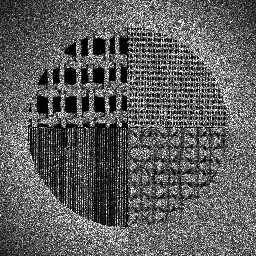}
        \subcaption{Noisy}
    \end{minipage} 
    \hspace{0.2em}
    \begin{minipage}[b]{.16\linewidth}
        \centering
        \includegraphics[width=\textwidth]{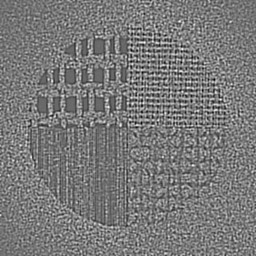}
        \subcaption{Gabor}
    \end{minipage}
    \hspace{0.2em}
    \begin{minipage}[b]{.16\linewidth}
        \centering
        \includegraphics[width=\textwidth]{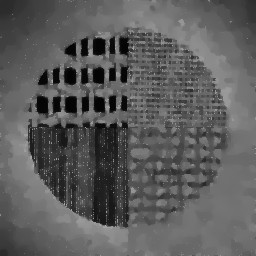}
        \subcaption{Ours} \label{hybrid-Ours}
    \end{minipage}
    
    \begin{minipage}[b]{.16\linewidth}
        \centering
        \includegraphics[width=\textwidth]{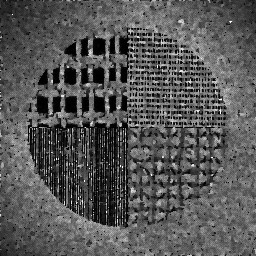}
        \subcaption{SO} \label{hybrid-SO}
    \end{minipage}
    \hspace{0.2em}
    \begin{minipage}[b]{.16\linewidth}
        \centering
        \includegraphics[width=\textwidth]{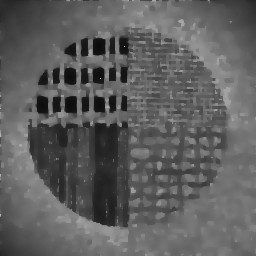}
        \subcaption{AA} \label{hybrid-AA}
    \end{minipage}
    \hspace{0.2em}
    \begin{minipage}[b]{.16\linewidth}
        \centering
        \includegraphics[width=\textwidth]{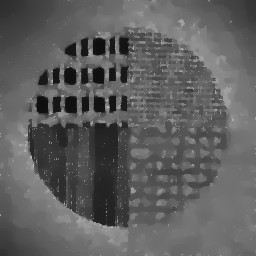}
        \subcaption{F1P-AA} \label{hybrid-F1P-AA}
    \end{minipage}
    
    \caption{The hybrid image: (a) Noisy: $L=4$, PSNR $=14.3859$. 
    (b) Gabor filtered noisy image. (c)-(f) Denoised results.} \label{hybrid} 
\end{figure}

\begin{figure}[t]
    \centering
    \begin{minipage}[b]{.16\linewidth}
        \centering
        \includegraphics[width=\textwidth]{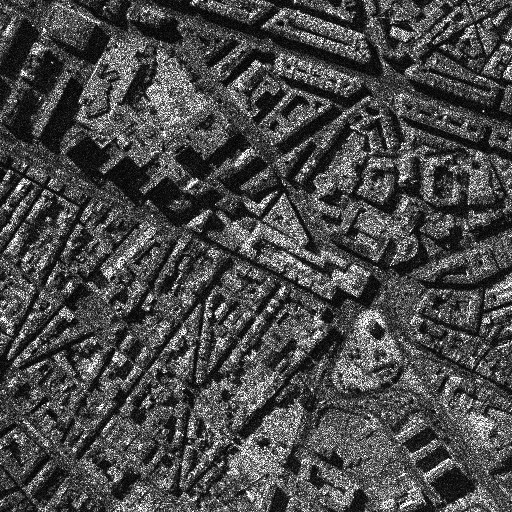}
        \subcaption{Noisy}
    \end{minipage} 
    \hspace{0.2em}
    \begin{minipage}[b]{.16\linewidth}
        \centering
        \includegraphics[width=\textwidth]{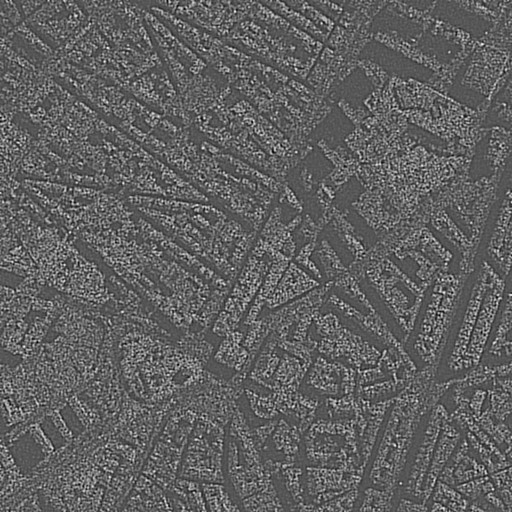}
        \subcaption{Gabor}
    \end{minipage}
    \hspace{0.2em}
    \begin{minipage}[b]{.16\linewidth}
        \centering
        \includegraphics[width=\textwidth]{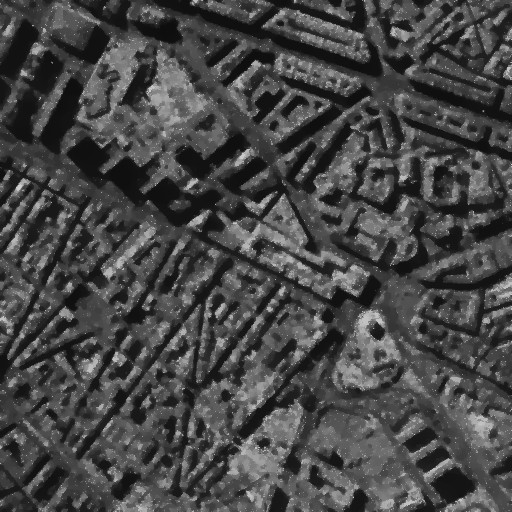}
        \subcaption{Ours} \label{satellite1-Ours}
    \end{minipage}
    
    \begin{minipage}[b]{.16\linewidth}
        \centering
        \includegraphics[width=\textwidth]{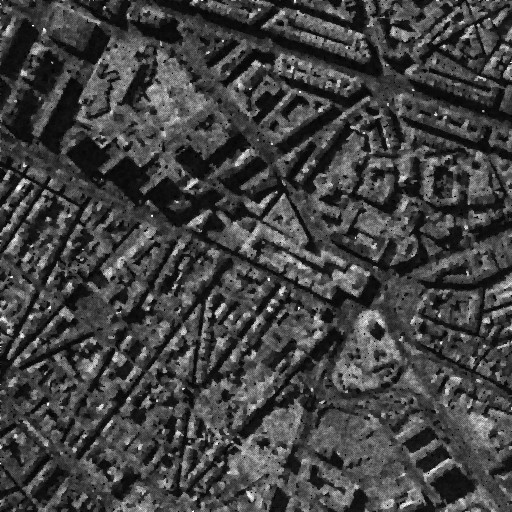}
        \subcaption{SO}
    \end{minipage}
    \hspace{0.2em}
    \begin{minipage}[b]{.16\linewidth}
        \centering
        \includegraphics[width=\textwidth]{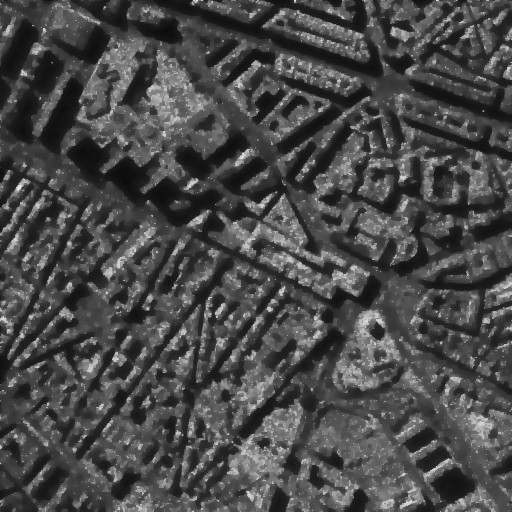}
        \subcaption{AA}
    \end{minipage}
    \hspace{0.2em}
    \begin{minipage}[b]{.16\linewidth}
        \centering
        \includegraphics[width=\textwidth]{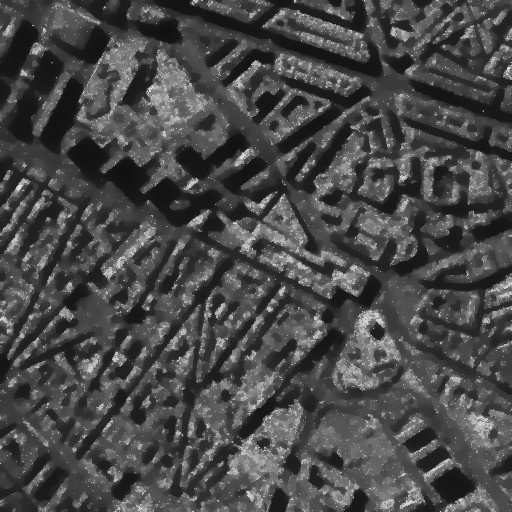}
        \subcaption{F1P-AA}
    \end{minipage}
    \caption{The satellite1 image: (a) Noisy: $L=4$, PSNR $=16.1315$. 
    (b) Gabor filtered noisy image. (c)-(f) Denoised results.} \label{satellite1}
\end{figure}

\begin{figure}[t]
    \centering
    \begin{minipage}[b]{.16\linewidth}
        \centering
        \includegraphics[width=\textwidth]{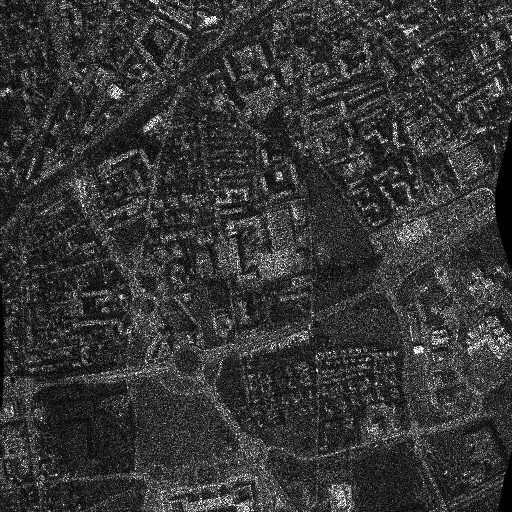}
        \subcaption{Noisy}
    \end{minipage} 
    \hspace{0.2em}
    \begin{minipage}[b]{.16\linewidth}
        \centering
        \includegraphics[width=\textwidth]{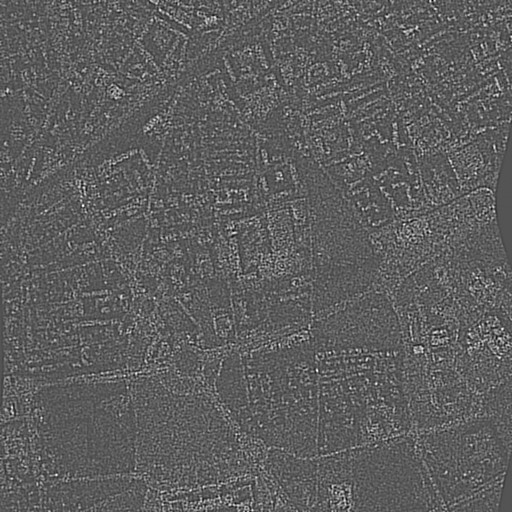}
        \subcaption{Gabor}
    \end{minipage}
    \hspace{0.2em}
    \begin{minipage}[b]{.16\linewidth}
        \centering
        \includegraphics[width=\textwidth]{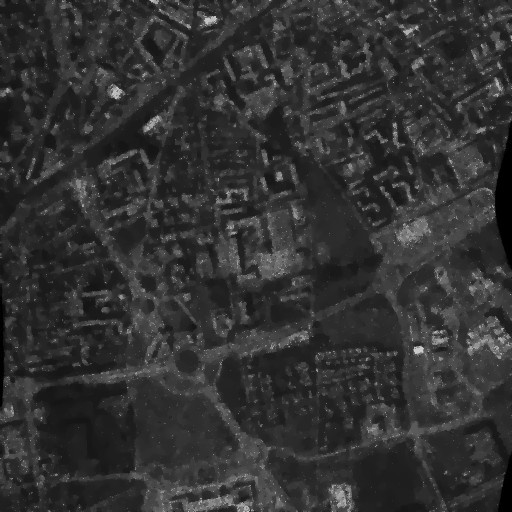}
        \subcaption{Ours} \label{satellite2-Ours}
    \end{minipage}
    
    \begin{minipage}[b]{.16\linewidth}
        \centering
        \includegraphics[width=\textwidth]{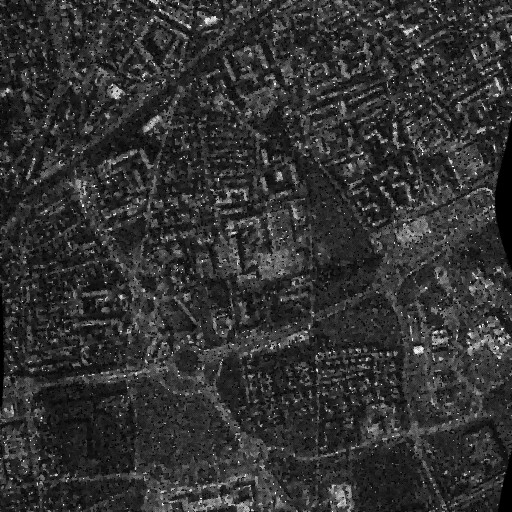}
        \subcaption{SO}
    \end{minipage}
    \hspace{0.2em}
    \begin{minipage}[b]{.16\linewidth}
        \centering
        \includegraphics[width=\textwidth]{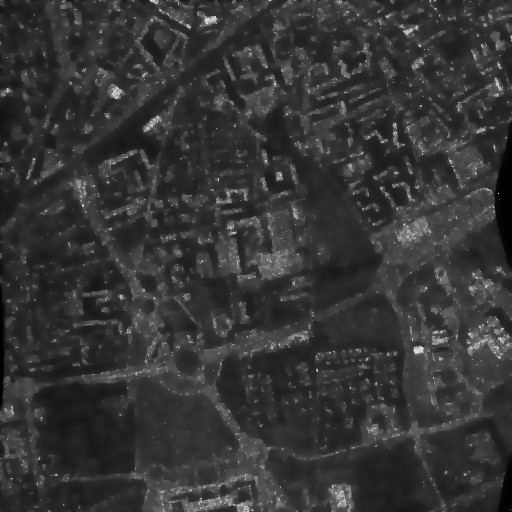}
        \subcaption{AA}
    \end{minipage}
    \hspace{0.2em}
    \begin{minipage}[b]{.16\linewidth}
        \centering
        \includegraphics[width=\textwidth]{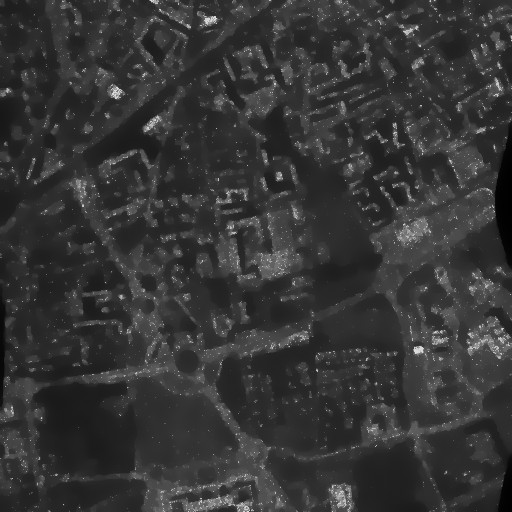}
        \subcaption{F1P-AA}
    \end{minipage}
    \caption{The satellite2 image: (a) Noisy: $L=4$, PSNR $=15.5765$. 
    (b) Gabor filtered noisy image. (c)-(f) Denoised results.} \label{satellite2}
\end{figure}

\bibliographystyle{elsarticle-names}
\bibliography{vf1p}

\end{document}